\newcommand{\R}{\mathbb{R}}
\newtheorem{theo}{Theorem}[section]
\newtheorem{coro}[theo]{Corollary}
\newtheorem{lemma}[theo]{Lemma}
\newtheorem{prop}[theo]{Proposition}
\theoremstyle{definition}
\newtheorem{remark}[theo]{Remark}
\newcommand{\eps}{\varepsilon}
\newcommand{\N}{{\mathbb{N}}}
\renewcommand{\epsilon}{\varepsilon}
\renewcommand{\leq}{\leqslant}
\renewcommand{\geq}{\geqslant}
\numberwithin{equation}{section}
\def\sideremark#1{\ifvmode\leavevmode\fi\vadjust{\vbox to0pt{\vss
 \hbox to 0pt{\hskip\hsize\hskip1em
 \vbox{\hsize2.1cm\tiny\raggedright\pretolerance10000
  \noindent #1\hfill}\hss}\vbox to15pt{\vfil}\vss}}}%
\title{Sharp concentration estimates near criticality for radial sign-changing solutions of Dirichlet and Neumann problems}
\author{Massimo Grossi\footnote{Dipartimento di Matematica, Universit\`{a} di Roma La Sapienza, P.le A. Moro 2, 00185 Roma, Italy; massimo.grossi@uniroma1.it},\ \  Alberto Salda\~{n}a,\footnote{Institut f\"ur Analysis, Karlsruhe Institute for Technology, Englerstra\ss e 2, 76131, Karlsruhe, Germany; alberto.saldana@partner.kit.edu}\ \ \& Hugo Tavares\footnote{CMAFCIO \& Departamento de Matem\'atica, Faculdade de Ci\^encias da Universidade de Lisboa, Edif\'icio C6, Piso 1, Campo Grande 1749-016  Lisboa, Portugal; hrtavares@ciencias.ulisboa.pt}}
\date{\today}
\begin{document}

\maketitle

\begin{abstract}
We consider radial solutions of the slightly subcritical problem $-\Delta u_\eps = |u_\eps|^{\frac{4}{n-2}-\varepsilon}u_\eps$ either on $\R^n$ ($n\geq 3$) or in a ball $B$ satisfying Dirichlet or Neumann boundary conditions.  In particular, we provide sharp rates and constants describing the asymptotic behavior (as $\eps\to 0$) of all local minima and maxima of $u_\eps$ as well as its derivative at roots.  Our proof is done by induction and uses energy estimates, blow-up/normalization techniques, a radial pointwise Pohozaev identity, and some ODE arguments. As corollaries, we complement a known asymptotic approximation of the Dirichlet nodal solution in terms of a tower of bubbles and present a similar formula for the Neumann problem.

\end{abstract}

\section{Introduction}
Let $n\geq 3$, $\eps\in(0,\frac{4}{n-2})$, and consider the subcritical problem in the whole space
\begin{align}\label{ws}
 -\Delta u_\eps = |u_\eps|^{\frac{4}{n-2}-\varepsilon}u_\eps\quad\text{ in }\R^n
\end{align}
or in a unitary ball $B$ centered at the origin
\begin{align}\label{sceq}
 -\Delta u_\eps = |u_\eps|^{\frac{4}{n-2}-\varepsilon}u_\eps\quad\text{ in }B
\end{align}
with either Dirichlet boundary conditions
\begin{align}\label{dbc}
u_\eps=0\  \text{ on }\partial B
\end{align}
or Neumann boundary conditions 
\begin{align}\label{nbc}
\partial_\nu u_\eps=0\ \text{ on }\partial B.
\end{align}
We are interested in a precise description of the asymptotic behavior of classical radially symmetric solutions as $\eps\to 0$. 

\medskip

The study of this type of results for the \emph{Dirichlet problem} originated in \cite{AP87}, where using ODE arguments (Emden-Fowler theory) it is proved that the unique \emph{positive} radial solution of \eqref{sceq}, \eqref{dbc} satisfies
\begin{align}
 \lim_{\eps\to 0} \eps u_\eps^2(0) &= (n(n-2))^{\frac{n-2}{2}}\frac{4\Gamma(n)}{(n-2)\Gamma(\frac{n}{2})^2},\label{AP1}\\
 \lim_{\eps\to 0} \eps^{-\frac{1}{2}} u_\eps(x) &= (n(n-2))^{\frac{n-2}{4}}
\Big( \frac{(n-2)\Gamma(\frac{n}{2})^2}{4\Gamma(n)} \Big)^\frac{1}{2}
 (|x|^{2-n}-1)\qquad \text{for $|x|\in(0,1]$}\label{AP2}.
\end{align}
In particular, \eqref{AP1}, \eqref{AP2} show that $u_\eps$ \emph{concentrates} at the origin and $u_\eps\to 0$ uniformly in compact subsets of $B\backslash \{0\}$ as $\eps\to 0$.  This behavior is consistent with the well-known fact that \eqref{sceq}, \eqref{dbc} has no nontrivial solutions (in any star-shaped domain) for $\eps=0$, due to the Pohozaev identity \cite{P65}. 

In \cite{BP89} the question of the asymptotic behavior of $u_\eps$ is revisited (considering also the operator $-\Delta-\lambda$ for $\lambda\geq 0$) for $n=3$, and the estimates \eqref{AP1}, \eqref{AP2} are shown using PDE methods based on the Pohozaev identity, Green functions, and elliptic regularity theory.

The authors in \cite{BP89} conjectured that a similar asymptotic behavior as in \eqref{AP1}, \eqref{AP2} should also occur for positive nonradial solutions in general domains (note that, by a moving-plane argument, all positive solutions of \eqref{sceq},\eqref{dbc} are radial \cite{GNN79}).  This conjecture was proved independently in \cite{H91} and \cite{R89}, where it is shown that \emph{least-energy solutions} possess the same limiting behavior with constants depending on the associated Green function. Furthermore, in \cite[Proposition 1]{H91} a $C^{1,\alpha}$-convergence of the solution at the boundary of the domain is shown; in particular in the case of a ball, by \eqref{AP2}, this implies that 
\begin{align}\label{AP2p}
 \lim_{\eps\to 0} \eps^{-\frac{1}{2}} u'_\eps(1) &=-
 (n-2)(n(n-2))^{\frac{n-2}{4}}
\Bigg( \frac{(n-2)\Gamma(\frac{n}{2})^2}{4\Gamma(n)} \Bigg)^\frac{1}{2},
 \end{align}
where  $u_\eps$ is the positive solution of \eqref{sceq},\eqref{dbc}.

After these seminal works, many extensions for related problems have been studied, for example, for Dirichlet positive solutions of operators of the form $-\Delta u +a(|x|)u=f(|x|)u^{\frac{n+2}{n-2}-\eps}$; the number of interesting papers for this problem is too large to give here a complete list of references, so just to give a glimpse of the results in this direction we refer to \cite{R02} and the references therein.

The first paper to consider Dirichlet \emph{sign-changing} subcritical solutions with \emph{annular-shaped nodal domains} seems to be \cite{PW07}, where the authors use a Lyapunov-Schmidt reduction scheme in domains with symmetries (general domains were considered afterwards in \cite{MP10}). See also \cite{CD06} where the particular case of a ball is considered using a similar strategy as in \cite{PW07}.  Particularly important for our approach is the recent paper \cite{DIP17}, where the Morse index of radial solutions of the slightly subcritical problem is studied using energy methods.  We describe in more detail the results of \cite{PW07,CD06,DIP17} and the relationship with ours after Theorem \ref{pointwise:coro}. 

Concerning nodal solutions of \emph{variants} of \eqref{sceq}, the literature is again very extensive. The following is an incomplete list of references whose only aim is to show the diversity of results and techniques used in this direction, see \cite{BMP06,BDP13,BDP13b,MP10,PW07,ABP90,I15} and the references therein.

\medbreak

The \emph{Neumann problem} has been much less studied.  A first remark is that all nontrivial solutions of \eqref{sceq}, \eqref{nbc} are necessarily sign-changing, since
\begin{align*}
0=-\int_B\Delta u_\eps = \int_B|u_\eps|^{\frac{4}{n-2}-\varepsilon}u_\eps.
\end{align*}
From a variational point of view, least-energy solutions have been considered in the sublinear ($\eps\in(\frac{4}{n-2},\frac{4}{n-2}+1)$) case  \cite{PW15}, in the superlinear-subcritical ($\eps\in(0,\frac{4}{n-2})$) case  \cite{ST17}, and in the critical ($\eps=0$) case \cite{CK91} (see also \cite{CK90}). In contrast to the Dirichlet problem, the Neumann b.c. allow the existence of nontrivial solutions if $\eps=0$ and, due to a symmetry-breaking phenomenon, minimal-energy solutions are not radially symmetric \cite{PW15,ST17,CK91}, and they do not blow up as $\eps\to 0$ (we study in detail the qualitative properties of this kind of solutions in a forthcoming paper). For $\eps=0$ the are no radial solutions to \eqref{sceq}, \eqref{nbc}, since the change of sign would force the existence of an interior nodal sphere; nevertheless, for $\eps>0$, \emph{radial} solutions of \eqref{sceq}, \eqref{nbc} do exist and therefore it is natural to study the asymptotic behavior of Neumann solutions as $\eps\to 0$, and we do this in Theorem \ref{explicit:thm_Neumann} below, where we detail its concentration rates.

In the literature one can also find an ample study of Neumann \emph{positive} solutions of $-\Delta u=\lambda u + u^\frac{n+2}{n-2}$ with $\lambda\neq 0$; here again we only mention the survey paper \cite{C07} and the references therein to have an idea of this topic, but we emphasize that many other interesting papers studying this problem are available.

Equation \eqref{ws} is not only a mathematical paradigm in nonlinear analysis of PDEs, but also it has a physical motivation, since, for $n=3$, radial solutions of \eqref{ws} solve the \emph{Lane-Emden equation of index $s=5-\varepsilon$}, namely,
\begin{align}\label{lem}
 \frac{1}{\xi^2}\frac{d}{d\xi}\left( \xi^2\frac{d}{d\xi}\theta \right)=-\theta^{s},\qquad \theta(0)=1,\qquad \theta'(0)=0
\end{align}
(usually only positive values of $\theta$ are considered). Equation \eqref{lem} is used in astrophysics to model self-gravitating spheres of plasma, such as stars or self-consistent stellar systems in polytropic-convective equilibrium, where the pressure $P$ and the density $\rho$ ($=k\theta^s$) satisfy a nonlinear relationship $P=c\rho^\frac{s+1}{s}$, see \cite{C57}.  In this setting, a solution $\theta$ is often called a \emph{polytrope}, and it contains (up to constants) important physical information, such as the radius of the star (the first root $r_1$ of $\theta$), the total mass ($\int_{B_{r_1}}\theta^s$), the pressure ($\theta^{s+1}$), and (for an ideal gas) the temperature is proportional to $\theta$.

\bigskip

The aim of this paper is to extend and generalize the results from \cite{AP87} to any radial solution of \eqref{ws} and of \eqref{sceq} satisfying either \eqref{dbc} or \eqref{nbc} with an arbitrary number of interior nodal spheres.  In particular, we show similar estimates to \eqref{AP1}, \eqref{AP2} and describe in a precise way the asymptotic behavior of all critical points, of all roots, and the values of $u$ and $u'$ at these points respectively. Here, a very delicate and precise analysis is needed to obtain the explicit rates and the associated constants. Our results seem to be the first to consider explicit constants regarding sign-changing solutions and the exact asymptotic behavior of the Neumann problem. As shown below, it turns out that the asymptotic study of the Dirichlet and the Neumann problem is closely intertwined and intimately related to the subcritical problem in the whole space $\R^n$.    

To state our main results let us  introduce some notations.  For a radial function $w:\R^n\to\R$ we use a common abuse of notation and identify $w(r)$ with $w(x)$ for $|x|=r$. Then, we say that $x\mapsto w(x)$ has \emph{$k$-interior zeros} if $r\mapsto w(r)$ has $k$ interior zeros in $(0,1)$. Given $m\in \N$, it is known (see for example \cite{N83}) that there is a unique (up to a sign) radial solution of the Dirichlet problem \eqref{sceq}, \eqref{dbc}  with exactly $m-1$ interior zeros and, for $m\geq 2$, a unique (up to a sign) radial solution of the Neumann problem \eqref{sceq}, \eqref{nbc} with exactly $m-1$ interior zeros.  Moreover, between two consecutive zeros there is exactly one critical point, which is either a local minimum or maximum.  For radial solutions of \eqref{sceq}, throughout this paper we use $(\delta_{k,\eps})_{k\in\N}$ and $(\rho_{k,\eps})_{k\in\N}$ to denote \emph{decreasing} sequences of critical points and zeros in $[0,1]$, respectively, see Figure \ref{fig2} (Dirichlet case) and Figure \ref{fig1} (Neumann case).

  \begin{figure}[h!]
  \begin{center}
  \includegraphics[width=.80\textwidth]{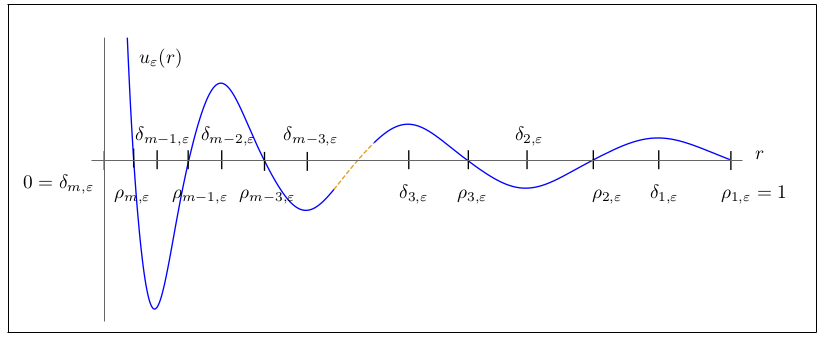}
\end{center}
 \caption{Example of a radial Dirichlet solution of \eqref{sceq} with $m-1(=6)$ interior zeros.}
 \label{fig2}
 \end{figure}

 \begin{figure}[h!]
  \begin{center}
 \includegraphics[width=.80\textwidth]{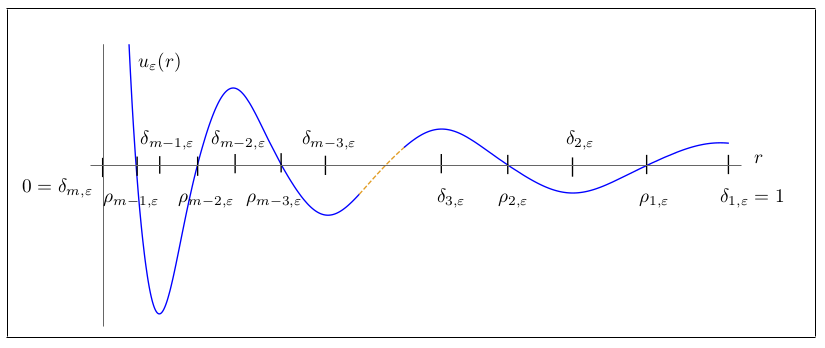}
 \end{center}
\caption{Example of a radial Neumann solution of \eqref{sceq} with $m-1(=6)$ interior zeros.}
\label{fig1}
\end{figure}

Let $\Gamma$ denote the standard Gamma function (which satisfies $\Gamma(n)=(n-1)!$ if $n\in\N$) and set 
\begin{align}\label{Cchi}
 \kappa_n:=\frac{(n-2)}{4}\frac{\Gamma(\frac{n}{2})^2}{\Gamma(n)}.
  \end{align}

 Our main results are the following.

\begin{theo}[Dirichlet case]\label{explicit:thm_Dirichlet}
For $n\geq 3$, $m\in\N$, and $\eps\in (0,\frac{4}{n-2})$, let $u_\eps$ be a radial solution of \eqref{sceq}, \eqref{dbc} with $m-1$ interior zeros. Let $(\delta_k)_{k=1}^m$ be the decreasing sequence of all the critical points of $u_\eps$ in $[0,1]$ and $(\rho_k)_{k=1}^{m}$ the decreasing sequence of all the zeros of $u_\eps$ in $[0,1]$. Then
\begin{align*}
0=\delta_{m,\eps}<\rho_{m,\eps}<\delta_{m-1,\eps}<\rho_{m-1,\eps}<\ldots<\delta_{1,\eps}<\rho_{1,\eps}=1
\end{align*}
and
  \begin{align*}
   \lim_{\eps\to 0}|u_\eps(\delta_{k,\eps})|\ (\kappa_n\epsilon)^{\frac{2k-1}{2}}
      &=D(k,m)&&  \text{for $k\in\{1,\ldots,m\}$},\\
   \lim_{\eps\to 0}\delta_{k,\eps}\ 
   (\kappa_n\epsilon)^{-\frac{2 (k n-1)}{n(n-2)}}
      &=d(k,m) &&  \text{for $k\in\{1,\ldots,m-1\}$ if $m\geq 2$},\\
   \lim_{\eps\to 0}|u_\eps'(\rho_{k,\eps})|\ (\kappa_n\epsilon)^{\frac{2 kn-3n+2}{2 (n-2)}}
      &=Z(k,m) &&  \text{for $k\in\{1,\ldots,m\}$},\\
   \lim_{\eps\to 0}\rho_{k,\eps}\ 
   (\kappa_n\epsilon)^{-\frac{2 (k-1)}{n-2}}
   &=z(k,m)&&  \text{for $k\in\{2,\ldots,m\}$  if $m\geq 2$};
  \end{align*}
in particular,
  \begin{align*}
   u_\eps(\delta_k)\sim \eps^{-\frac{1}{2}-(k-1)},\quad \delta_{k,\eps}\sim \eps^{\frac{2(n-1)}{n(n-2)}+\frac{2}{n-2}(k-1)},\quad 
   u_\eps'(\rho_{k,\eps})\sim \eps^{\frac{1}{2}-\frac{n}{n-2}(k-1)},\quad 
   \rho_{k,\eps}\sim \eps^{\frac{2}{n-2}(k-1)}.
  \end{align*} 
The coefficients $D$, $d$, $Z$, and $z$ are explicitly given by
  \begin{align*}
D(k,m)&=
(n(n-2) )^{\frac{n-2}{4}}\frac{ \Gamma (m-k+1)}{m^\frac{1}{2} \Gamma (m)},\\
d(k,m)&=(m-k)^{\frac{1}{n}} \left(\frac{m^\frac{1}{2} \Gamma (m)}{\Gamma (m-k+1)}\right)^{\frac{2}{n-2}},\\
Z(k,m)&=n^{\frac{n-2}{4}}(n-2)^{\frac{n+2}{4}}  (m-k+1)^{\frac{n-1}{n-2}} \left(\frac{\Gamma (m-k+1)}{m^\frac{1}{2} \Gamma
   (m)}\right)^{\frac{n}{n-2}},\\
z(k,m)&=(m-k+1)^{-\frac{1}{n-2}} \left(\frac{m^\frac{1}{2} \Gamma (m)}{\Gamma (m-k+1)}\right)^{\frac{2}{n-2}}.
  \end{align*}
 \end{theo}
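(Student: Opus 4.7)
The plan is to argue by induction on the number $m\geq 1$ of critical points (equivalently, on $m-1$ interior zeros). The base case $m=1$ is the positive solution treated in \cite{AP87}: one verifies that the constants $D(1,1)=(n(n-2))^{(n-2)/4}$ and $Z(1,1)=n^{(n-2)/4}(n-2)^{(n+2)/4}$ coincide with \eqref{AP1} and \eqref{AP2p} after using \eqref{Cchi}, and $d(k,1)$, $z(k,1)$ are vacuous.

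For the inductive step, fix $m\geq 2$, set $p=\frac{n+2}{n-2}-\eps$, and let $\rho_{2,\eps}$ denote the largest \emph{interior} zero of $u_\eps$. The essential self-similar reduction is to define
\begin{equation*}
v_\eps(x):=\rho_{2,\eps}^{\,2/(p-1)}\,u_\eps(\rho_{2,\eps}\,x),\qquad x\in\overline{B_1}.
\end{equation*}
Since the equation $-\Delta u=|u|^{p-1}u$ is invariant under the scaling $u\mapsto \lambda^{2/(p-1)}u(\lambda\cdot)$, the function $v_\eps$ is again a radial Dirichlet solution of \eqref{sceq} on $B_1$, now with $m-2$ interior zeros, and its critical points and zeros satisfy $\delta_{k,\eps}^{v}=\delta_{k+1,\eps}/\rho_{2,\eps}$, $\rho_{k,\eps}^{v}=\rho_{k+1,\eps}/\rho_{2,\eps}$. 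The inductive hypothesis furnishes the full asymptotic behavior of $v_\eps$, so once the single scale $\rho_{2,\eps}$ is identified, all rates and constants for the $m-1$ inner bubbles of $u_\eps$ follow from the conjugation identities
\begin{equation*}
u_\eps(\delta_{k+1,\eps})=\rho_{2,\eps}^{-2/(p-1)}\,v_\eps(\delta_{k,\eps}^{v}),\qquad u_\eps'(\rho_{k+1,\eps})=\rho_{2,\eps}^{-2/(p-1)-1}\,v_\eps'(\rho_{k,\eps}^{v}).
\end{equation*}

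To pin down $\rho_{2,\eps}$ (and with it the remaining outermost quantities $\delta_{1,\eps}$, $|u_\eps(\delta_{1,\eps})|$, $|u_\eps'(1)|$) I would analyze the outermost nodal region $[\rho_{2,\eps},1]$ via the Emden-Fowler change of variables: setting $t=\log r$ and $W_\eps(t):=r^{(n-2)/2}u_\eps(r)$ converts the radial equation into a second-order ODE that is autonomous at $\eps=0$. The trajectory of $W_\eps$ between $t=\log\rho_{2,\eps}$ and $t=0$ should, after normalization by $M_\eps:=|u_\eps(\delta_{1,\eps})|$, track exactly one loop of the homoclinic orbit of the autonomous limit, which corresponds to the Aubin-Talenti bubble $U(y)=(n(n-2))^{(n-2)/4}(1+|y|^2)^{-(n-2)/2}$ on $\R^n$. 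Combining this with the radial pointwise Pohozaev identity applied on $[\rho_{2,\eps},1]$ (where both endpoint values vanish) and the energy estimate that each nodal region carries exactly the Sobolev mass $S^{n/2}$ yields $M_\eps\sim D(1,m)(\kappa_n\eps)^{-1/2}$, $|u_\eps'(1)|\sim Z(1,m)(\kappa_n\eps)^{1/2}$, and an explicit value of $|u_\eps'(\rho_{2,\eps})|$. Equating the latter with the inductively known $\rho_{2,\eps}^{-2/(p-1)-1}Z(1,m-1)(\kappa_n\eps)^{1/2}$ produces one scalar equation whose leading-order solution is $\rho_{2,\eps}\sim z(2,m)(\kappa_n\eps)^{2/(n-2)}$; the critical point $\delta_{1,\eps}$ is then localized from where the rescaled homoclinic loop attains its maximum.

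The final step is algebraic: checking that the recursions $D(k+1,m)=z(2,m)^{-(n-2)/2}D(k,m-1)$ and the analogous ones for $Z$, $d$, and $z$ are consistent with the closed-form expressions in the statement, which reduces to elementary manipulations using $\Gamma(m)/\Gamma(m-1)=m-1$. I expect the main difficulty to be the outermost blow-up/matching step: one must preclude additional concentration on $[\rho_{2,\eps},1]$, confirm that exactly one Aubin-Talenti loop appears with the prescribed amplitude, and keep the error terms in the Pohozaev identity uniform in $\eps$. The tools to address this are the global energy splitting into $m$ equal Sobolev contributions, the radial pointwise Pohozaev identity on each nodal annulus, and careful ODE analysis (via Emden-Fowler) between consecutive critical points.
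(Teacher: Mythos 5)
Your route differs from the paper's: you induct Dirichlet-to-Dirichlet by rescaling at the largest interior zero $\rho_{2,\eps}$, whereas the paper alternates Dirichlet and Neumann problems (rescaling at $\delta_{1,\eps}$, so that the neck between $\rho_{2,\eps}$ and $\delta_{1,\eps}$ is treated as the outer region of a Neumann solution). Parts of your outer analysis would indeed close: the Pohozaev identity on $[\rho_{2,\eps},1]$ (where only the terms $(u_\eps'(1))^2$ and $\rho_{2,\eps}^n(u_\eps'(\rho_{2,\eps}))^2$ survive at the endpoints), combined with the inductively known combination $\rho_{2,\eps}^{n/2}|u_\eps'(\rho_{2,\eps})|\approx Z(1,m-1)(\kappa_n\eps)^{1/2}$ and the identity $|u_\eps'(1)|=\int_{\delta_{1,\eps}}^1|u_\eps|^{p_\eps}r^{n-1}\,dr$ evaluated through the bubble limit, reproduces $Z(1,m)$ (via $Z(1,m)^2=Z(1,m-1)^2+n^{\frac n2-1}(n-2)^{\frac n2+1}$) and then $D(1,m)$; and your recursion $D(k+1,m)=z(2,m)^{-\frac{n-2}{2}}D(k,m-1)$ is consistent with the closed formulas.

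The genuine gap is the determination of $\delta_{1,\eps}$ and of $\rho_{2,\eps}$ separately, both of which the theorem requires and on which all the transferred inner rates depend. By applying Pohozaev on $[\rho_{2,\eps},1]$ you discard the boundary term at $\delta_{1,\eps}$; in the paper it is exactly this term, $\frac{n-2}{n-\eps\frac{n-2}{2}}|u_\eps(\delta_{1,\eps})|^{\frac{2n}{n-2}-\eps}\delta_{1,\eps}^n$, of the same order $\eps$ as $(u_\eps'(1))^2$ (Lemma \ref{2nrD:lemma2}), that pins down $\delta_{1,\eps}\sim\eps^{\frac{2(n-1)}{n(n-2)}}$. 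Your substitute, locating $\delta_{1,\eps}$ ``where the rescaled homoclinic loop attains its maximum,'' carries no information at leading blow-up order, since $\delta_{1,\eps}|u_\eps(\delta_{1,\eps})|^{\frac{p_\eps-1}{2}}\to 0$ (Theorem \ref{dip:thm}): the maximum collapses to the origin of the limit bubble, and extracting its rate would require second-order/matched asymptotics near the hyperbolic fixed point in the Emden--Fowler plane, which you do not supply. Likewise, your claim that the outer analysis yields ``an explicit value of $|u_\eps'(\rho_{2,\eps})|$'' cannot hold at leading order: $\rho_{2,\eps}^{n-1}|u_\eps'(\rho_{2,\eps})|=\int_{\rho_{2,\eps}}^{\delta_{1,\eps}}|u_\eps|^{p_\eps}r^{n-1}\,dr\sim\eps^{3/2}$, invisible against the bubble scale $\eps^{1/2}$ (in blow-up variables that region shrinks to a point), so the matching gives only the product $\rho_{2,\eps}^{n/2}|u_\eps'(\rho_{2,\eps})|$ and your system remains underdetermined in $(\rho_{2,\eps},|u_\eps'(\rho_{2,\eps})|)$ even after $\delta_{1,\eps}$ is known. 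The missing ingredient is an analysis of the neck $(\rho_{2,\eps},\delta_{1,\eps})$, where the solution is asymptotically flat; in the paper this is precisely the Neumann step (Lemmas \ref{intlem}, \ref{prop:lemma}, \ref{2eqs}), which supplies the extra relation $|u_\eps(\delta_{1,\eps})|^{\frac{4}{n-2}}\,\delta_{1,\eps}^{\,n}\,\rho_{2,\eps}^{\,2-n}\to n(n-2)$ (up to factors tending to $1$) and thereby separates $\rho_{2,\eps}$. Finally, the appeal to energy quantization ($S^{n/2}$ per nodal region) cannot fix any amplitude or rate: it is scale-invariant at leading order, and in the paper it enters only through \cite{DIP17} to provide the uniform pointwise domination justifying the limit passages.
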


\begin{theo}[Neumann case]\label{explicit:thm_Neumann} 
For $n\geq 3$, $m\geq 2$, and $\eps\in (0,\frac{4}{n-2})$, let $u_\varepsilon$ be a radial solution of \eqref{sceq}, \eqref{nbc} with $m-1$ interior zeros.  Let $(\delta_k)_{k=1}^m$ be the decreasing sequence of all the critical points of $u_\eps$ in $[0,1]$ and $(\rho_k)_{k=1}^{m-1}$ the decreasing sequence of all the zeros of $u_\eps$ in $[0,1]$. Then
\begin{align*}
0=\delta_{m,\eps}<\rho_{m-1,\eps}<\delta_{m-1,\eps}<\rho_{m-2,\eps}<\ldots<\rho_{1,\eps}<\delta_{1,\eps}=1
\end{align*}
and
\begin{align*}
\lim_{\eps\to 0}|u_\eps(\delta_{k,\eps})|\, 
(\kappa_n\epsilon)^{\frac{2kn-3n+2}{2n}}
&=\widetilde D(k,m)&&  \text{for $k\in\{1,\ldots,m\}$},\\
\lim_{\eps\to 0}\delta_{k,\eps}\, 
(\kappa_n\epsilon)^{-\frac{2 (k-1)}{n-2}}
&=\widetilde d(k,m)&&\text{for $k\in\{2,\ldots,m-1\}$},\\
\lim_{\eps\to 0}|u_\eps'(\rho_{k,\eps})|\,
(\kappa_n\epsilon)^{\frac{2kn-3n+4}{2 (n-2)}}
&=\widetilde Z(k,m)&&\text{for $k\in\{1,\ldots,m-1\}$},\\
\lim_{\eps\to 0}\rho_{k,\eps}\, 
(\kappa_n\epsilon)^{-\frac{2kn-2n+2}{n(n-2)}}
&=\widetilde z(k,m)&&\text{for $k\in\{1,\ldots,m-1\}$};
\end{align*}
in particular,
\begin{align*}  
u_\eps(\delta_{k,\eps})\sim \eps^{\frac{n-2}{2n}-(k-1)},\quad 
\delta_{k,\eps}\sim \eps^{\frac{2(k-1)}{n-2}}, \quad
u_\eps'(\rho_{k,\eps})\sim \eps^{\frac{n-4}{2(n-2)}-\frac{n(k-1)}{n-2}},\quad
\rho_{k,\eps}\sim \eps^{\frac{2}{n(n-2)}+\frac{2(k-1)}{n-2}}.
\end{align*}
The coefficients $\widetilde  D$, $\widetilde  d$, $\widetilde  Z$, and $\widetilde  z$ are explicitly given by
\begin{align*}
\widetilde D(k,m)&=
(n(n-2))^{\frac{n-2}{4}} 
(m-1)^{\frac{1}{2}-\frac{1}{n}}
\frac{ \Gamma (m-k+1)}{\Gamma
   (m)},\\
 \widetilde d(k,m)&= 
    (m-1)^{-\frac{1}{n}} (m-k)^{\frac{1}{n}} \left(\frac{\Gamma (m)}{\Gamma (m-k+1)}\right)^{\frac{2}{n-2}},\\
 \widetilde Z(k,m)&=
n^{\frac{n-2}{4}}  (n-2)^{\frac{n+2}{4}}  
  (m-1)^\frac{1}{2} 
  (m-k)^{-\frac{1}{n-2}} \left(\frac{\Gamma
   (m-k+1)}{\Gamma (m)}\right)^{\frac{n}{n-2}}
 ,\\
 \widetilde z(k,m)&=
 (m-1)^{-\frac{1}{n}} (m-k)^{\frac{1}{n-2}} \left(\frac{\Gamma (m)}{\Gamma (m-k+1)}\right)^{\frac{2}{n-2}}.
\end{align*}
\end{theo}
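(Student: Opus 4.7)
My plan is to reduce Theorem \ref{explicit:thm_Neumann} to the already-stated Dirichlet Theorem \ref{explicit:thm_Dirichlet} through a single rescaling identity between the Neumann and Dirichlet radial solutions with the \emph{same} number $m-1$ of interior zeros. The key observation is that both are, up to the natural scaling $u(x)\mapsto\lambda^{2/(p-1)}u(\lambda x)$ of \eqref{sceq} (with $p:=\tfrac{n+2}{n-2}-\eps$), restrictions of one and the same radial solution of the whole-space problem \eqref{ws}: the Dirichlet rescaling pins the outermost zero of this common profile to $\partial B$, while the Neumann rescaling pins its outermost critical point to $\partial B$, and the ratio of the two scales is precisely the outermost interior critical point $\delta_{1,\eps}^D$ of the Dirichlet solution.

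Concretely, I would first prove the identity
$$u_\eps^N(x)\;=\;(\delta_{1,\eps}^D)^{2/(p-1)}\,u_\eps^D\!\bigl(\delta_{1,\eps}^D\,x\bigr),\qquad x\in\overline{B_1},$$
where $u_\eps^D$ denotes the radial Dirichlet solution of \eqref{sceq}--\eqref{dbc} with $m-1$ interior zeros and $\delta_{1,\eps}^D\in(0,1)$ its outermost critical point. The right-hand side is radial and solves \eqref{sceq} by scale-invariance, satisfies the Neumann condition at $\partial B_1$ because $(u_\eps^D)'(\delta_{1,\eps}^D)=0$, and has exactly $m-1$ interior zeros (the rescalings of $\rho_{2,\eps}^D,\ldots,\rho_{m,\eps}^D$, which all lie in $(0,\delta_{1,\eps}^D)$). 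The uniqueness up to sign of radial Neumann solutions recalled in the introduction then forces the claimed equality.

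From this identity one directly reads off, in the appropriate index ranges,
$$\delta_{k,\eps}^N=\frac{\delta_{k,\eps}^D}{\delta_{1,\eps}^D},\qquad \rho_{k,\eps}^N=\frac{\rho_{k+1,\eps}^D}{\delta_{1,\eps}^D},\qquad u_\eps^N(\delta_{k,\eps}^N)=(\delta_{1,\eps}^D)^{2/(p-1)}u_\eps^D(\delta_{k,\eps}^D),$$
$$(u_\eps^N)'(\rho_{k,\eps}^N)=(\delta_{1,\eps}^D)^{2/(p-1)+1}(u_\eps^D)'(\rho_{k+1,\eps}^D).$$
Plugging in the rates and constants supplied by Theorem \ref{explicit:thm_Dirichlet} for $\delta_{k,\eps}^D$, $\rho_{k,\eps}^D$, $u_\eps^D(\delta_{k,\eps}^D)$ and $(u_\eps^D)'(\rho_{k,\eps}^D)$, then collecting the exponents of $\kappa_n\eps$ and simplifying the products of Gamma factors in $D(\cdot,m)$, $d(\cdot,m)$, $Z(\cdot,m)$, $z(\cdot,m)$, one recovers the announced exponents $\tfrac{2kn-3n+2}{2n}$, $\tfrac{2(k-1)}{n-2}$, $\tfrac{2kn-3n+4}{2(n-2)}$, $\tfrac{2kn-2n+2}{n(n-2)}$ together with the explicit constants $\widetilde D,\widetilde d,\widetilde Z,\widetilde z$.

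The main point requiring care is the $\eps$-dependence of the scaling exponent $\sigma(\eps):=2/(p-1)=(n-2)/2+O(\eps)$. Since $\delta_{1,\eps}^D\sim d(1,m)(\kappa_n\eps)^{2(n-1)/(n(n-2))}$ decays only polynomially, the correction factor $(\delta_{1,\eps}^D)^{\sigma(\eps)-(n-2)/2}=\exp\!\bigl(O(\eps\log\eps)\bigr)$ tends to $1$ and can safely be absorbed into the $o(1)$ remainder; once this is noted, the only remaining obstacle is the bookkeeping verification that the products of Gamma factors $\Gamma(m-k+1)/\Gamma(m)$ together with the powers of $m-1$ and $m-k$ collapse to exactly the closed-form constants $\widetilde D(k,m),\ldots,\widetilde z(k,m)$. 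This is a direct, if somewhat laborious, computation rather than an analytic difficulty, and it constitutes the principal obstacle of the proof.
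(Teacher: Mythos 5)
Your rescaling identity is correct (it is exactly the relation the paper itself uses in Lemma \ref{Dir:res}), your bookkeeping of exponents and Gamma factors does reproduce $\widetilde D,\widetilde d,\widetilde Z,\widetilde z$, and your remark about the $\eps$-dependence of the exponent $2/(p-1)$ is handled properly. The problem is logical, not computational: you take Theorem \ref{explicit:thm_Dirichlet} for solutions with $m-1$ interior zeros as an available input, but inside this paper that theorem has no proof independent of the statement you are proving. The two theorems are established by an intertwined induction in which the Neumann result at level $m$ comes \emph{first}: Proposition \ref{prop1} derives Neumann with $m-1$ interior zeros from Dirichlet with $m-2$ interior zeros, and only afterwards Proposition \ref{prop2} derives Dirichlet with $m-1$ interior zeros from that Neumann result (using, in addition, the Pohozaev identity \eqref{poho1} and the blow-up bounds of Theorem \ref{dip:thm}). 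So your argument, which is essentially Lemma \ref{Dir:res} run backwards, is circular: the quantities you need as input (the precise rate and constant for $\delta^D_{1,\eps}$, $u^D_\eps(\delta^D_{k,\eps})$, etc., at the same level $m$) are obtained in the paper only \emph{via} Theorem \ref{explicit:thm_Neumann} at that level.

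Concretely, what your reduction misses is the genuinely new analytic content of the Neumann case: the behavior of the solution on the outermost annulus $(\rho_{1,\eps},1)$, where it is flat and has no Dirichlet counterpart at a lower induction level. In the paper this is handled by the integral identity of Lemma \ref{intlem}, the energy bound of Lemma \ref{Ebound} (which itself relies on \cite{DIP17}), the normalization argument of Lemma \ref{prop:lemma} showing $\rho_{1,\eps}\to 0$ and $u_\eps/u_\eps(1)\to 1$ away from the origin, and Lemma \ref{2eqs}, after which the three relations \eqref{simeq} are solved as a nonlinear system to pin down $\rho_{1,\eps}$, $|u_\eps(1)|$, and $|u_\eps'(\rho_{1,\eps})|$. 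A legitimate rescaling step is available to you, but it is the one in Lemma \ref{rescaled}: rescaling by the outermost zero $\rho_{1,\eps}$ produces a Dirichlet solution with $m-2$ interior zeros, which is the correct lower-level input; it controls only the inner region, and the outer-annulus analysis cannot be avoided. As written, your proposal either presupposes the harder half of the paper's induction or requires an independent proof of the Dirichlet theorem at level $m$, which you do not supply.
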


The proof of Theorems \ref{explicit:thm_Dirichlet} and \ref{explicit:thm_Neumann} is done by induction, and relies on a radial Pohozaev identity, a blow-up/normalization procedure, energy estimates, direct computations, and some ODE arguments. A more detailed discussion of this strategy can be found after Corollary \ref{ODE:coro} below.

Observe that, in the Neumann case, the behavior of $u_\eps'(\rho_{1,\eps})$ (that is, of the derivative of the solution at the largest interior zero) is particularly interesting, since its behavior changes drastically depending on the dimension (as in many situations in critical problems, see for example \cite{BN83}, dimension 4 is a threshold). Indeed we have that
\begin{align*}
&\text{for $n=3$, } \lim_{\eps\to 0}|u_\eps'(\rho_{1,\eps})|\ (\frac{\pi}{32}\epsilon)^{\frac{1}{2}} =(m-1)^{-\frac{1}{2}}3^\frac{1}{4}, \text{\ \ therefore \ \  }|u_\eps'(\rho_{1,\eps})|\to \infty\text{ as }\eps\to 0;\\
&\text{for $n=4$, } |u_\eps'(\rho_{1,\eps})|\to 4\sqrt{2}\text{ as }\eps\to 0;\\
&\text{for $n\geq 5$, } \lim_{\eps\to 0^+} \eps^\frac{4-n}{2(n-2)}=\infty, \text{\ \ therefore\ \  }u_\eps'(\rho_{1,\eps})\to 0\text{ as }\eps\to 0.
\end{align*}

Another difference between the Neumann and the Dirichlet case is the behavior of the solution at the largest critical point $\delta_{1,\eps}$, where $|u_\eps'(\delta_{1,\eps})|\to \infty$ in the Dirichlet case but $u_\eps'(\delta_{1,\eps})=u_\eps'(1)\to 0$ for Neumann b.c; in fact, the Dirichlet solution is unbounded in the nodal set that touches the boundary $\partial B$ as $\eps\to 0$, whereas the Neumann solution goes uniformly to zero as $\eps\to 0$ in this region.  Actually, our approach also yields information on the asymptotic behavior of $u_\eps(x)$ for fixed $|x|\in(0,1)$; the next result can be seen as an extension of \eqref{AP2}.

\setlength{\unitlength}{1cm}
\begin{center}
\begin{figure}[h!]
\begin{picture}(14,4)
\put(.5,4){$u_\eps(r)$}
\put(7.8,4){$u_\eps(r)$}
\put(6.8,.5){$r$}
\put(14.2,.5){$r$}
\includegraphics[width=.46\textwidth]{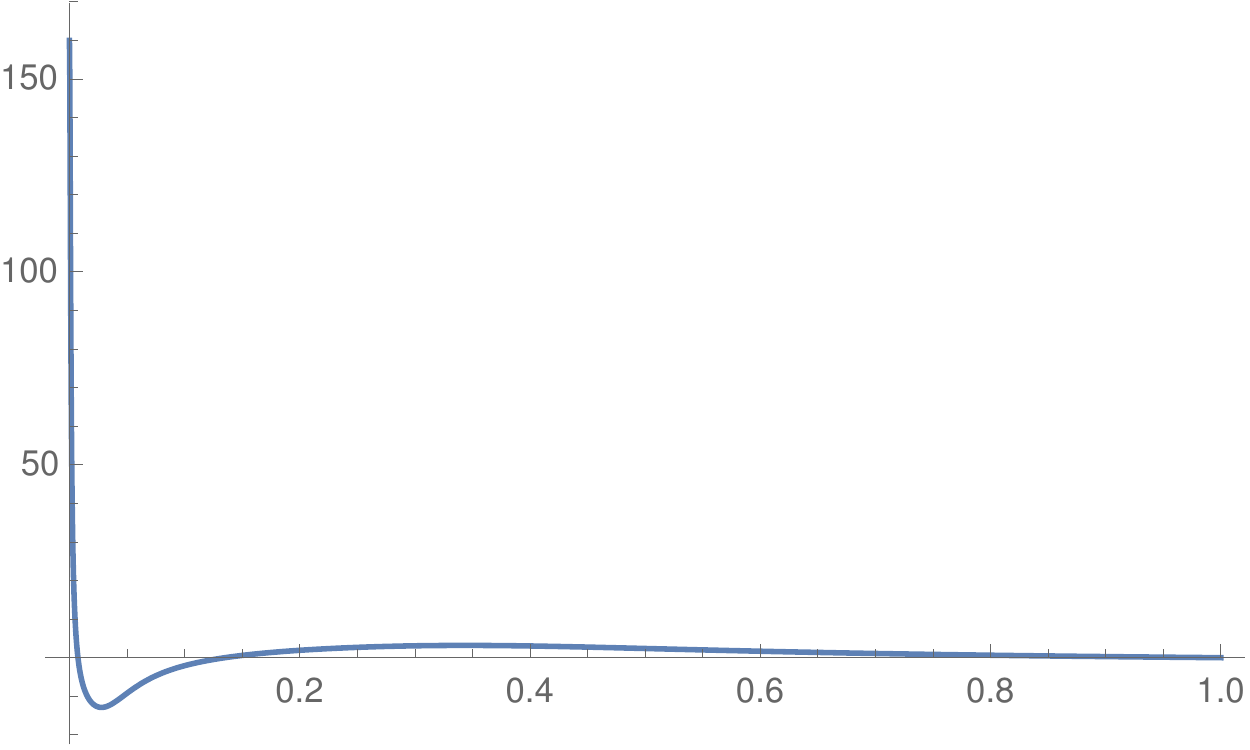}\qquad\includegraphics[width=.45\textwidth]{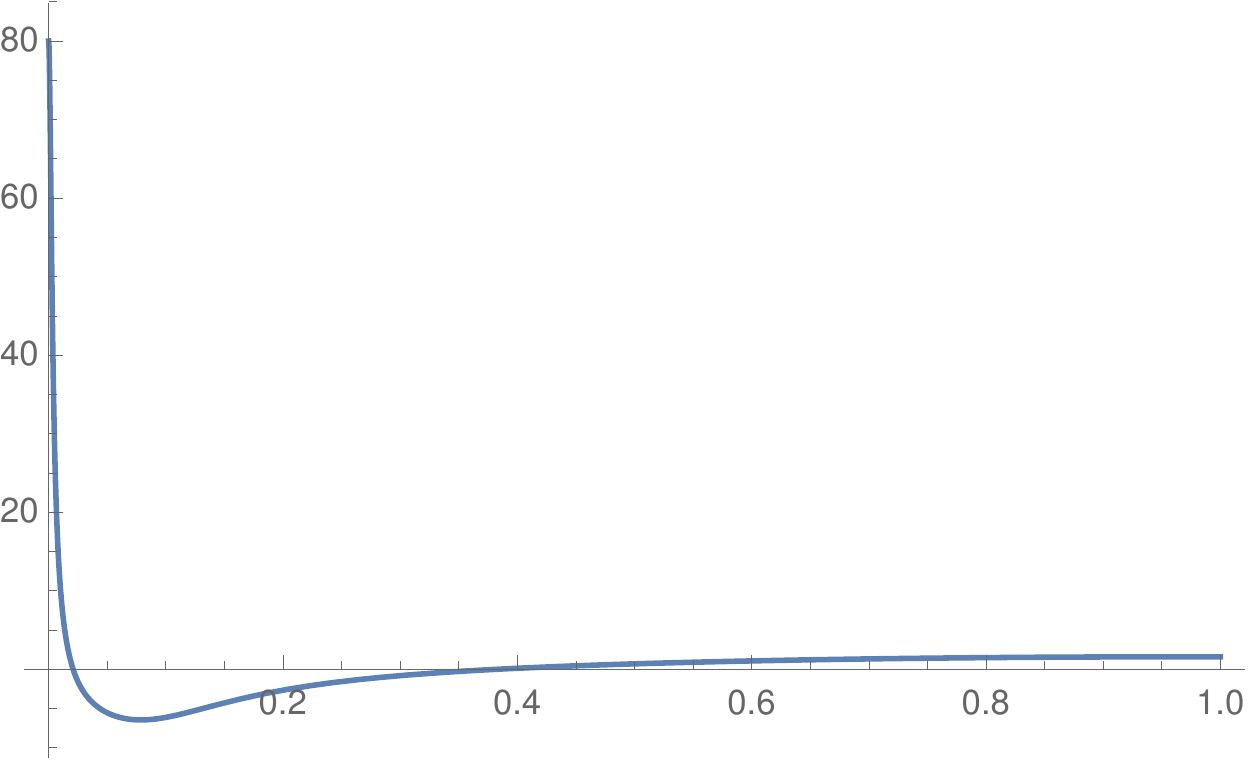} 
\end{picture}
\caption{A radial solution $u_\eps$ of \eqref{sceq} satisfying either \eqref{dbc} (on the left) or \eqref{nbc} (on the right) with two interior zeros for $\eps=1$ and $n=3$ showing concentration at the origin.}
\end{figure}
\end{center}

\begin{theo}\label{pointwise:coro}
 For $\eps\in(0,\frac{4}{n-2})$ let $u_\eps$ be a radial solution of \eqref{sceq} with $m-1$ interior zeros and fix $x\in \overline{B}\backslash\{0\}$. If $u_\eps$ satisfies Dirichlet b.c. \eqref{dbc} and $m\geq 1$,
  \begin{align*}
  |u_\eps(x)|(\kappa_n\epsilon)^{-\frac{1}{2}} = (n(n-2) )^{\frac{n-2}{4}}m^\frac{1}{2}(|x|^{2-n}-1) + o(1).
  \end{align*} 
and,  if $u_\eps$ satisfies Neumann b.c. \eqref{nbc} and $m\geq 2$,
  \begin{align*}
  &|u_\eps(x)|(\kappa_n\epsilon)^{-\frac{n-2}{2n}}=(n(n-2))^{\frac{n-2}{4}} (m-1)^{\frac{n-2}{2n}}+o(1),
   \end{align*}
 where $o(1)\to0$  as $\eps\to 0$, uniformly in compact subsets of $\overline{B}\backslash\{0\}$. In particular, $u_\eps$ converges uniformly to zero in compact subsets of $\overline{B}\backslash\{0\}$. 
\end{theo}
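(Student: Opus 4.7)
The argument reduces the pointwise convergence to two inputs already provided by Theorems \ref{explicit:thm_Dirichlet} and \ref{explicit:thm_Neumann}: the outermost critical point $\delta_{1,\eps}$ (Dirichlet) and the outermost interior zero $\rho_{1,\eps}$ (Neumann) tend to $0$, and the precise asymptotic rates of $|u_\eps'(1)|$ (Dirichlet) and $u_\eps(1)$ (Neumann) are known. Fix a compact $K\subset\overline B\setminus\{0\}$ and set $r_0:=\min_{x\in K}|x|>0$. For $\eps$ sufficiently small, $u_\eps$ has constant sign and is strictly monotone on $[r_0,1]$, and we may assume without loss of generality that $u_\eps>0$ there.

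Introduce the normalization $v_\eps(r):=u_\eps(r)(\kappa_n\eps)^{-\alpha}$, with $\alpha=1/2$ in the Dirichlet case and $\alpha=(n-2)/(2n)$ in the Neumann case. Then $v_\eps$ solves the radial ODE
\[
-(r^{n-1}v_\eps'(r))'=r^{n-1}(\kappa_n\eps)^{\beta_\eps}\,v_\eps(r)^{p_\eps},\qquad p_\eps:=\tfrac{n+2}{n-2}-\eps,
\]
with exponent $\beta_\eps:=\alpha(p_\eps-1)\to 4\alpha/(n-2)>0$. The needed a priori bound on $v_\eps$ over $[r_0,1]$ is obtained as follows. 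In the Dirichlet case, the map $r\mapsto r^{n-1}u_\eps'(r)$ is decreasing on $[\delta_{1,\eps},1]$ (its derivative $-r^{n-1}u_\eps^{p_\eps}$ is negative) and vanishes at $\delta_{1,\eps}$, whence $|r^{n-1}u_\eps'(r)|\leq |u_\eps'(1)|$; integrating $-u_\eps'$ from $r$ to $1$ gives $u_\eps(r)\leq |u_\eps'(1)|(r^{2-n}-1)/(n-2)$. Combined with $|u_\eps'(1)|=O((\kappa_n\eps)^{1/2})$ from Theorem \ref{explicit:thm_Dirichlet} at $k=1$ (recall $\rho_{1,\eps}=1$), this uniformly bounds $v_\eps$ on $[r_0,1]$. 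In the Neumann case, monotonicity of $u_\eps$ on $[\rho_{1,\eps},1]$ gives directly $u_\eps(r)\leq u_\eps(1)=O((\kappa_n\eps)^{(n-2)/(2n)})$ by Theorem \ref{explicit:thm_Neumann} at $k=1$ (recall $\delta_{1,\eps}=1$), again a uniform bound on $v_\eps$.

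With $v_\eps$ uniformly bounded on $[r_0,1]$, the right-hand side of the rescaled ODE is $O((\kappa_n\eps)^{\beta_\eps})=o(1)$ uniformly. Integrating the ODE from $r$ to $1$ yields $r^{n-1}v_\eps'(r)=v_\eps'(1)+o(1)$, and a second integration gives
\[
v_\eps(r)=v_\eps(1)-\frac{v_\eps'(1)}{n-2}(r^{2-n}-1)+o(1)
\]
uniformly in $r\in[r_0,1]$. In the Dirichlet case $v_\eps(1)=0$ and $v_\eps'(1)=u_\eps'(1)(\kappa_n\eps)^{-1/2}\to -Z(1,m)=-(n(n-2))^{(n-2)/4}(n-2)m^{1/2}$, so $v_\eps(r)\to (n(n-2))^{(n-2)/4}m^{1/2}(r^{2-n}-1)$, which is the claimed identity. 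In the Neumann case $v_\eps'(1)=0$ and $v_\eps(1)\to \widetilde D(1,m)=(n(n-2))^{(n-2)/4}(m-1)^{(n-2)/(2n)}$, yielding the claimed constant.

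The principal technical point is the uniform a priori bound on $v_\eps$ over $[r_0,1]$; it rests on the fact that $\delta_{1,\eps}\to 0$ (resp.\ $\rho_{1,\eps}\to 0$), so that $[r_0,1]$ lies entirely in the monotone tail region, together with the $k=1$ rates supplied by the main theorems. Once this bound is in hand, the rest is a short ODE integration and direct substitution.
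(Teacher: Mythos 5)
Your proof is correct, and it follows a genuinely different route from the paper's. For the Dirichlet case the paper writes $u(|x|)=\int_{|x|}^1 r^{1-n}\int_{\delta_{1,\eps}}^r u^{p_\eps}t^{n-1}\,dt\,dr$, rescales the inner integral by the blow-up profile $z_\eps$, and passes to the limit by dominated convergence using the uniform pointwise bounds of Theorem \ref{dip:thm} (inherited from \cite{DIP17}) together with \eqref{lim} and the rate of $|u_\eps(\delta_{1,\eps})|$, finally upgrading pointwise to uniform convergence via an Arzel\`a--Ascoli argument; for the Neumann case it simply combines Lemma \ref{prop:lemma} (flatness of $u_\eps/u_\eps(1)$ on $[a,1]$) with the rate of $u_\eps(1)$. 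You instead rescale by the expected power of $\eps$, obtain the uniform a priori bound on $[r_0,1]$ by the elementary monotonicity of $r\mapsto r^{n-1}u_\eps'(r)$ (giving $u_\eps(r)\leq|u_\eps'(1)|\frac{r^{2-n}-1}{n-2}$ in the Dirichlet case, $u_\eps(r)\leq u_\eps(1)$ in the Neumann case), and then integrate the ODE twice from the boundary inward, where the nonlinear term carries the factor $(\kappa_n\eps)^{\beta_\eps}=o(1)$; the only external inputs are the $k=1$ rates of Theorems \ref{explicit:thm_Dirichlet} and \ref{explicit:thm_Neumann} and the facts $\delta_{1,\eps},\rho_{1,\eps}\to0$, and your constants check out ($Z(1,m)/(n-2)=(n(n-2))^{\frac{n-2}{4}}m^{\frac12}$, $\widetilde D(1,m)=(n(n-2))^{\frac{n-2}{4}}(m-1)^{\frac{n-2}{2n}}$). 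What your approach buys is economy: by integrating from $r$ to $1$ rather than from $\delta_{1,\eps}$ outward you never cross the concentration region, so you need neither the $L^1$ domination of Theorem \ref{dip:thm} nor Lemma \ref{prop:lemma}, and uniformity on $K$ comes out of the integration directly, with no compactness step. What the paper's route buys is that it reuses machinery already set up for the main induction (the bubble profile $U$ and the integral \eqref{lim}), which makes the limit constant appear as an explicit integral of the standard bubble and keeps the Dirichlet computation parallel to Lemma \ref{L1}. Both arguments legitimately take the main theorems as input, so there is no circularity in either.
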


As mentioned above, in \cite{PW07,CD06} the authors use a Lyapunov-Schmidt reduction scheme to study the asymptotic profile of radial sign-changing solutions of \eqref{sceq}, \eqref{dbc}.  This approach uses the family of all positive solutions in $\mathcal{D}^{1,2}(\R^n)$ of the critical problem (see \cite{A76, T76})
\begin{align}\label{le}
-\Delta w=w^\frac{n+2}{n-2}\qquad \text{ in }\R^n
\end{align}
given by
\begin{align}\label{bubble}
w_{\xi,\mu}(y)=\gamma_n (1+\mu^\frac{4}{n-2}|y-\xi|^2)^\frac{2-n}{2}\mu,\qquad \xi\in\R^n,\quad \mu>0,\quad \gamma_n:=(n(n-2))^\frac{n-2}{4}.
\end{align}
Using the terminology of differential geometry, the solution $w_{\xi,\mu}$ is often referred to as \emph{single bubble}. In particular, in \cite{PW07,CD06} it is shown that a solution with exactly $(m-1)$-interior zeros has the form
\begin{align}\label{dpf}
 u_\eps(y)=\gamma_n \sum_{k=1}^m(-1)^{k+1} \Bigg(\frac{1}{1+[\alpha_k \eps^{\frac{1}{2}-k}]^\frac{4}{n-2}|y|^2}\Bigg)^\frac{n-2}{2}\alpha_k \eps^{\frac{1}{2}-k}-f_\eps(y)\eps^\frac{1}{2},\qquad y\in B,
\end{align}
where $f_\eps$ is a function which is uniformly bounded in $B$, $\alpha_k$ are some positive constants, and $\gamma_n$ is given by \eqref{bubble}.  Formula \eqref{dpf} is sometimes called a \emph{superposition of bubbles} or a \emph{tower of bubbles}.  We can use now Theorems \ref{explicit:thm_Dirichlet} and \ref{pointwise:coro} to complement \eqref{dpf}.
\begin{coro}[Dirichlet tower of bubbles]\label{dp:coro}
Given $m\geq 1$ let
 \begin{align}\label{alphak}
  \alpha_k:=\frac{\Gamma (m-k+1)}{m^\frac{1}{2} \Gamma (m)}\kappa_n^{\frac{1}{2}-k}=\frac{\Gamma (m-k+1) }{m^\frac{1}{2} \Gamma (m)}\left(\frac{(n-2) \Gamma \left(\frac{n}{2}\right)^2}{4\Gamma
   (n)}\right)^{\frac{1}{2}-k}\quad\text{ for }k\in\{0,\ldots,m\}.
 \end{align}
Then, for all sufficiently small $\eps>0$ there are only two radial solutions $u_\eps$ and $-u_\eps$ of \eqref{sceq}, \eqref{dbc} with exactly $m-1$ interior zeros in $(0,1)$ and $u_\eps$ satisfies \eqref{dpf} with $\alpha_k$ as in \eqref{alphak} and, for $K\subset\subset \overline{B}\backslash\{0\}$,
\begin{align}\label{O}
\lim_{\eps\to 0}\|f_\eps-\gamma_n\alpha_0\|_{L^\infty(K)}=
0,
\end{align}
where  $\gamma_n:=(n(n-2))^\frac{n-2}{4}.$
\end{coro}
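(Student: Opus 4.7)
The plan is to combine the Lyapunov--Schmidt tower-of-bubbles expansion \eqref{dpf} established (for sufficiently small $\eps$) in \cite{PW07,CD06}, which supplies both the existence of the solution and the structural form of the expansion with \emph{some} positive scaling parameters $\alpha_k$ and uniformly bounded remainder $f_\eps$, with the sharp asymptotics of Theorems \ref{explicit:thm_Dirichlet} and \ref{pointwise:coro} to read off the explicit values of $\alpha_k$ and the pointwise limit of $f_\eps$. Uniqueness (up to sign) of the radial solution with exactly $m-1$ interior zeros is the classical Ni-type result recalled in the discussion just before Theorem \ref{explicit:thm_Dirichlet} (see \cite{N83}).

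To pin down $\alpha_k$, I would evaluate \eqref{dpf} at the $k$-th critical point $\delta_{k,\eps}$. Writing $\mu_j := \alpha_j \eps^{\frac{1}{2}-j}$ for the scale of the $j$-th bubble, a direct comparison using the rate $\delta_{k,\eps} \sim \eps^{\frac{2(kn-1)}{n(n-2)}}$ from Theorem \ref{explicit:thm_Dirichlet} shows that $\mu_j^{\frac{2}{n-2}}\delta_{k,\eps} \to 0$ for all $j \leq k$ (so the $j$-th bubble is essentially at its peak value $\gamma_n \mu_j$), while $\mu_j^{\frac{2}{n-2}}\delta_{k,\eps} \to \infty$ for all $j > k$ (so the more concentrated bubbles have decayed to $\gamma_n \mu_j^{-1}\delta_{k,\eps}^{2-n}$). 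An elementary inspection of the resulting $\eps$-exponents yields that every term with $j \neq k$ is of order $o(\mu_k)$, and that the correction $f_\eps(\delta_{k,\eps})\eps^{1/2}$ is likewise $o(\mu_k)$ (since $k \geq 1$). Consequently
\begin{equation*}
|u_\eps(\delta_{k,\eps})| = \gamma_n \alpha_k \eps^{\frac{1}{2}-k}(1+o(1)),
\end{equation*}
and matching this with the value $\gamma_n\frac{\Gamma(m-k+1)}{m^{1/2}\Gamma(m)}\kappa_n^{\frac{1}{2}-k}\eps^{\frac{1}{2}-k}(1+o(1))$ supplied by Theorem \ref{explicit:thm_Dirichlet} identifies $\alpha_k$ with the value in \eqref{alphak}.

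To establish \eqref{O}, I would evaluate \eqref{dpf} at a fixed $x\in\overline{B}\setminus\{0\}$. For each $k\in\{1,\dots,m\}$ one has $\mu_k^{\frac{4}{n-2}}|x|^2 \to \infty$, so
\begin{equation*}
\gamma_n\bigl(1+\mu_k^{\frac{4}{n-2}}|x|^2\bigr)^{\frac{2-n}{2}}\mu_k = \gamma_n \mu_k^{-1}|x|^{2-n}(1+o(1)) = \gamma_n \alpha_k^{-1}\eps^{k-\frac{1}{2}}|x|^{2-n}(1+o(1)),
\end{equation*}
and hence the sum in \eqref{dpf} is dominated by $k=1$, which (using $\alpha_1^{-1}=m^{1/2}\kappa_n^{1/2}$) equals $\gamma_n m^{1/2}\kappa_n^{1/2}\eps^{1/2}|x|^{2-n}(1+o(1))$. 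Comparing \eqref{dpf} with the expansion $u_\eps(x) = \gamma_n m^{1/2}\kappa_n^{1/2}\eps^{1/2}(|x|^{2-n}-1) + o(\eps^{1/2})$ coming from Theorem \ref{pointwise:coro} then forces $f_\eps(x)\eps^{1/2} = \gamma_n m^{1/2}\kappa_n^{1/2}\eps^{1/2}+o(\eps^{1/2})$, i.e.\ $f_\eps(x)\to \gamma_n m^{1/2}\kappa_n^{1/2} = \gamma_n \alpha_0$. Uniformity on compact subsets of $\overline{B}\setminus\{0\}$ is inherited from the corresponding uniformity in Theorem \ref{pointwise:coro} together with the locally uniform decay of each bubble away from the origin.

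The main technical hurdle is the careful bookkeeping of $\eps$-exponents needed to verify, for every $j\neq k$, that the $j$-th bubble (and the remainder) truly contributes at lower order than the $k$-th bubble at the point $\delta_{k,\eps}$. Once the candidate $\alpha_j$ are posited, these comparisons all reduce to elementary inequalities among the explicit rational exponents appearing in Theorem \ref{explicit:thm_Dirichlet}, and the Gamma-function structure of \eqref{alphak} arises automatically from inverting the formula for $D(k,m)$.
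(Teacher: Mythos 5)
Your proposal is correct and follows essentially the same route as the paper: it imports the bubble-tower ansatz of \cite{PW07,CD06}, evaluates it at the critical points $\delta_{k,\eps}$ using the rates of Theorem \ref{explicit:thm_Dirichlet} to identify the $\alpha_k$ (your exponent bookkeeping is exactly the paper's computation with the quantities $A_{k,j,\eps}$), and then identifies the limit of $f_\eps$ on compact sets via Theorem \ref{pointwise:coro}. The only points worth making explicit are the case $k=m$ (where $\delta_{m,\eps}=0$, so the rate formula is not needed and the paper treats it separately) and that $u_\eps>0$ on $K$ for small $\eps$, so the absolute values from Theorem \ref{pointwise:coro} can be dropped when solving for $f_\eps$.
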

We remark that, up to some calculations and substitutions, the constants \eqref{alphak} can also be deduced from the proofs in \cite{PW07,CD06}, which rely on different arguments than ours.  Note that the Lyapunov-Schmidt approach provides a general shape of the solution; however, it seems difficult to obtain precise information regarding rates as in Theorem \ref{explicit:thm_Dirichlet} using only \eqref{dpf}. In this regard, the Lyapunov-Schmidt scheme and our approach complement each other.

We are not aware of any result as in \cite{PW07,CD06} for the Neumann problem \eqref{sceq}, \eqref{nbc}; however, using the rates of Theorem \ref{explicit:thm_Neumann} and Corollary \ref{dp:coro} we can show the following result.

\begin{coro}[Neumann  tower of bubbles]\label{nbt:coro}
 Let $\eps\in(0,\frac{4}{n-2})$, $m\geq 2$, $n\geq 3$, and $u_\eps$ be the solution of \eqref{sceq}, 
 \eqref{nbc} with exactly $m-1$ interior zeros such that $(-1)^{m+1}u_\eps(0)>0$. For $k\in\{1,\ldots,m\}$ let
\begin{align}\label{beta}
\beta_k:=
(m-1)^{\frac{n-2}{2n}} m^{\frac{1}{2}} \kappa_n^\frac{n-1}{n}\alpha_k
=(m-1)^{\frac{n-2}{2n}}\frac{ \Gamma (m-k+1)}{\Gamma(m)}\Big( \frac{(n-2)}{4}\frac{\Gamma(\frac{n}{2})^2}{\Gamma(n)} \Big)^{\frac{n-2}{2n}-(k-1)}.
\end{align}
Then, for $y\in B$,
 \begin{align}\label{dpfn}
   u_\eps(y)=\gamma_n \sum_{k=1}^m(-1)^{k+1} \Bigg(\frac{1}{1+[\beta_{k,\eps}\, \eps^{\frac{n-2}{2n}-(k-1)}]^\frac{4}{n-2}|y|^2}\Bigg)^\frac{n-2}{2}&\beta_{k,\eps}\, \eps^{\frac{n-2}{2n}-(k-1)}+g_\eps(y)\eps^{1+\frac{n-2}{2n}},
  \end{align}
where $\lim_{\eps\to 0}\beta_{k,\eps}=\beta_k$ for $k\in\{1,\ldots,m\}$, $\gamma_n:=(n(n-2))^\frac{n-2}{4}$, and $g_\eps$ is a function which is uniformly bounded in $B$.
\end{coro}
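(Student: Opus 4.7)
The plan is to deduce the Neumann tower of bubbles from the Dirichlet one of Corollary \ref{dp:coro} via a rescaling. Let $p:=(n+2)/(n-2)-\eps$. Since $u_\eps'(1)=0$ and the equation is subcritical, the radial solution extends past $r=1$ along the ODE until it reaches its first zero at some $R_\eps>1$. The rescaled function $v_\eps(x):=R_\eps^{2/(p-1)}u_\eps(R_\eps x)$ then solves \eqref{sceq}--\eqref{dbc} on $B$, has exactly $m-1$ interior zeros, and preserves the sign $(-1)^{m+1}v_\eps(0)>0$. Since this matches the sign of the Dirichlet solution $u^D_\eps$ of Corollary \ref{dp:coro} (computed from \eqref{dpf} for small $\eps$), the uniqueness of such solutions (\cite{N83}) gives $v_\eps=u^D_\eps$. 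Writing $\eta_\eps:=1/R_\eps\in(0,1)$, we obtain
\begin{equation*}
u_\eps(y)=\eta_\eps^{2/(p-1)}\,u^D_\eps(\eta_\eps y),\qquad y\in\overline{B}.
\end{equation*}
Matching the largest critical points ($\delta_{1,\eps}=1$ on the left, $\delta_{1,\eps}^D=\eta_\eps$ on the right) identifies $\eta_\eps=\delta_{1,\eps}^D$.

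Substituting \eqref{dpf} for $u^D_\eps$ into the identity and invoking the exact bubble scaling $w_{0,\mu}(\eta y)=\eta^{-(n-2)/2}w_{0,\mu\eta^{(n-2)/2}}(y)$ for the Aubin--Talenti bubble $w_{0,\mu}(z):=\gamma_n(1+\mu^{4/(n-2)}|z|^2)^{-(n-2)/2}\mu$, the $k$-th Dirichlet bubble of concentration $\mu_k:=\alpha_k\eps^{1/2-k}$ transforms into a rescaled bubble of concentration $\widetilde\mu_{k,\eps}:=\alpha_k\eta_\eps^{(n-2)/2}\eps^{1/2-k}$, up to an overall prefactor $\tau_\eps:=\eta_\eps^{2/(p-1)-(n-2)/2}$. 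Writing $\widetilde\mu_{k,\eps}=\beta_{k,\eps}\eps^{(n-2)/(2n)-(k-1)}$ gives
\begin{equation*}
\beta_{k,\eps}=\alpha_k\,\eta_\eps^{(n-2)/2}\,\eps^{-(n-1)/n}.
\end{equation*}
Theorem \ref{explicit:thm_Dirichlet} yields $\eta_\eps=\delta_{1,\eps}^D\sim d(1,m)(\kappa_n\eps)^{2(n-1)/(n(n-2))}$, and a direct computation gives $d(1,m)^{(n-2)/2}=(m-1)^{(n-2)/(2n)}m^{1/2}$; hence $\beta_{k,\eps}\to\alpha_k(m-1)^{(n-2)/(2n)}m^{1/2}\kappa_n^{(n-1)/n}=\beta_k$ as claimed in \eqref{beta}.

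The remainder has two sources. The rescaled Dirichlet error $\eta_\eps^{2/(p-1)}f_\eps(\eta_\eps y)\eps^{1/2}$ is $O(\eps^{1+(n-2)/(2n)})$ times a uniformly bounded quantity (using the $L^\infty$-bound on $f_\eps$ from \eqref{dpf} together with $\eta_\eps^{2/(p-1)}\sim\eta_\eps^{(n-2)/2}\sim\eps^{(n-1)/n}$). The second source is the prefactor $\tau_\eps=1+O(\eps|\log\eps|)$ multiplying the bubble sum, and this is the main technical obstacle: because the bubble depends nonlinearly on its concentration parameter, $\tau_\eps$ cannot be absorbed into a single multiplicative redefinition of $\beta_{k,\eps}$, and a naive bound on the residual $(\tau_\eps-1)\sum(-1)^{k+1}w_{0,\widetilde\mu_{k,\eps}}(y)$ can exceed the claimed order near the origin. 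Resolving this requires adjusting $\beta_{k,\eps}$ by an $O(\eps|\log\eps|)$ correction that preserves its limit $\beta_k$ and carrying out a sharp pointwise analysis exploiting the scale separation of the bubbles ($\widetilde\mu_{1,\eps}\to 0$ versus $\widetilde\mu_{k,\eps}\to\infty$ for $k\geq 2$) and the alternating-sign cancellations in the tower, to verify that the resulting $g_\eps$ is uniformly bounded in $B$.
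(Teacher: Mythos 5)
Your construction is the same as the paper's: the identity $u_\eps(y)=\eta_\eps^{2/(p-1)}u^D_\eps(\eta_\eps y)$ with $\eta_\eps=\delta_{1,\eps}^D$ is exactly \eqref{dpf2} (the paper obtains it by restricting/rescaling the Dirichlet solution at its largest critical point rather than by extending the Neumann solution to its next zero, but by uniqueness these are the same map), your identification $\beta_{k,\eps}=\alpha_k\eta_\eps^{(n-2)/2}\eps^{-(n-1)/n}\to d(1,m)^{(n-2)/2}\kappa_n^{(n-1)/n}\alpha_k=\beta_k$ reproduces \eqref{futurereference} and \eqref{beta}, and your treatment of the rescaled remainder $\eta_\eps^{2/(p-1)}f_\eps(\eta_\eps\cdot)\eps^{1/2}=O(\eps^{1+\frac{n-2}{2n}})$ is precisely the paper's $-f_\eps(\delta_{1,\eps}x)\,d_\eps\,\eps^{\frac{3n-2}{2n}}$ term. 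Up to that point the proposal is correct and matches the paper.

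The gap is that you stop before proving the statement: your last paragraph only \emph{describes} the analysis ("adjusting $\beta_{k,\eps}$ by an $O(\eps|\log\eps|)$ correction \ldots carrying out a sharp pointwise analysis \ldots to verify that the resulting $g_\eps$ is uniformly bounded") without carrying it out, so the uniform boundedness of $g_\eps$ in \eqref{dpfn} — which is the actual content of the corollary beyond the limit of $\beta_{k,\eps}$ — is left unproven. For comparison, the paper does not perform the near-origin analysis you call for either: it defines $d_\eps$ with the subcritical exponent $\tfrac{2(n-2)}{4-\eps(n-2)}$, sets $\beta_{k,\eps}:=d_\eps\alpha_k$, and passes from \eqref{dpf} to \eqref{dpfn} by direct substitution ("after some calculations"), thereby absorbing the discrepancy you call $\tau_\eps=\eta_\eps^{2/(p-1)-(n-2)/2}=1+O(\eps|\log\eps|)$ into the $\eps$-dependence of $\beta_{k,\eps}$ and the remainder; the tension you point out — that the \emph{same} $\beta_{k,\eps}$ must serve simultaneously as amplitude and concentration parameter, while a crude bound on the residual of the innermost bubble is of order $\eps|\log\eps|\,\eps^{\frac{n-2}{2n}-(m-1)}$, exceeding $\eps^{1+\frac{n-2}{2n}}$ near the origin by a factor $\eps^{-(m-1)}|\log\eps|$ when $m\geq2$ — is not addressed there. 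So your hesitation identifies a real subtlety rather than a defect of your route; but as a proof of the corollary your proposal is incomplete: to finish it you must either carry out the pointwise estimate you sketch (quantifying how the mismatch is distributed over the nested concentration regions and into $g_\eps$), or argue, as the paper implicitly does, that all factors of the form $\delta_{1,\eps}^{O(\eps)}=\eps^{O(\eps)}$ may be absorbed into $\beta_{k,\eps}$ and $g_\eps$ — and in the latter case you must justify that absorption, since it is exactly the step your own analysis flags as non-obvious.
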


\medskip

For our last result, we present a consequence of Theorems \ref{explicit:thm_Dirichlet} and \ref{explicit:thm_Neumann} regarding radial solutions of \eqref{ws}. These solutions have infinitely many oscillations and between two consecutive roots there is only one local maximum or minimum (see \cite[page 294]{N83}).  It is easily seen that solutions of \eqref{eq:radial} and \eqref{sceq} satisfying \eqref{dbc} or \eqref{nbc} are all connected via suitable rescalings.  As a consequence, we have the following asymptotic profiles.
\begin{coro}\label{ODE:coro}
 Let $n\in\N$, $n\geq 3$, $\eps\in (0,\frac{4}{n-2})$, and $w_\eps\in C^2([0,\infty))$ be the radial solution of 
\begin{align}\label{eq:radial}
-\Delta w_\eps=|w_\eps|^{\frac{4}{n-2}-\varepsilon}w_\eps\quad \text{in }\R^n,\qquad w_\eps(0)=1.
\end{align}
Moreover, let $(r_{i,\eps})_{i=1}^\infty$ and $(s_{i,\eps})_{i=1}^\infty$ be respectively the (divergent) increasing sequences of all zeros and critical points of $w_\eps$, such that
\[
0=s_{1,\eps}<r_{1,\eps}<s_{2,\eps}<r_{2,\eps}<\ldots < s_{i,\eps}<r_{i,\eps}<\ldots 
\]
Then, 
\begin{align*}
\lim_{\eps\to 0}r_{m,\epsilon }(\kappa_n\epsilon)^{\frac{2m-1}{n-2}}&= (n(n-2))^\frac{1}{2} m^{\frac{1}{2-n}}\  \Gamma (m)^{-\frac{2}{n-2}} &&\text{for $m\geq 1$,}\\
\lim_{\eps\to0}|w_\eps'(r_{m,\epsilon })|(\kappa_n\epsilon)^{\frac{1-m n}{n-2}}&= \left(\frac{n-2}{n}\right)^\frac{1}{2} m^{\frac{n-1}{n-2}}  \Gamma (m)^{\frac{n}{n-2}} &&\text{for $m\geq 1$,},\\
\lim_{\eps\to 0}s_{m,\eps}\, (\kappa_n\epsilon)^{\frac{2mn -3n+2}{n(n-2)}}&= (n(n-2))^\frac{1}{2} (m-1)^{\frac{1}{n}} \Gamma (m)^{-\frac{2}{n-2}} &&\text{for $m\geq 2$,},\\
\lim_{\eps\to 0}|w_\eps(s_{m,\eps})|\, (\kappa_n\epsilon)^{1-m} &=\Gamma (m) &&\text{for $m\geq 2$}.
\end{align*} 
\end{coro}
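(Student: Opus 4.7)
The idea is to exploit the scaling invariance of the equation: with $p_\eps:=\frac{n+2}{n-2}-\eps$, if $w$ solves $-\Delta w=|w|^{p_\eps-1}w$ in $\R^n$, then so does $x\mapsto \Lambda^{2/(p_\eps-1)} w(\Lambda x)$ for every $\Lambda>0$. I will apply this to two specific choices of $\Lambda$ to reduce Corollary \ref{ODE:coro} to Theorems \ref{explicit:thm_Dirichlet} and \ref{explicit:thm_Neumann}.

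For $m\geq 1$, set $\Lambda=r_{m,\eps}$ and define $U^{D}_{\eps,m}(x):=r_{m,\eps}^{2/(p_\eps-1)}\,w_\eps(r_{m,\eps}\,x)$. By construction $U^{D}_{\eps,m}$ is a radial solution of \eqref{sceq} vanishing on $\partial B$, and its interior zeros are $r_{k,\eps}/r_{m,\eps}$ for $k=1,\dots,m-1$, so there are exactly $m-1$ of them. By the uniqueness (up to sign) of radial Dirichlet solutions with a prescribed number of interior zeros recalled in the introduction, $U^{D}_{\eps,m}$ coincides, up to sign, with the solution of Theorem \ref{explicit:thm_Dirichlet}. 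In the labeling of that theorem, $\delta_{m,\eps}=0$ and $\rho_{1,\eps}=1$, so
\[
U^{D}_{\eps,m}(0)=r_{m,\eps}^{2/(p_\eps-1)},\qquad (U^{D}_{\eps,m})'(1)=r_{m,\eps}^{\,1+2/(p_\eps-1)}\,w_\eps'(r_{m,\eps}).
\]
Applying Theorem \ref{explicit:thm_Dirichlet} with $k=m$ and with $k=1$ respectively yields
\[
r_{m,\eps}^{2/(p_\eps-1)}\,(\kappa_n\eps)^{(2m-1)/2}\to D(m,m),\qquad r_{m,\eps}^{\,1+2/(p_\eps-1)}\,|w_\eps'(r_{m,\eps})|\,(\kappa_n\eps)^{-1/2}\to Z(1,m).
\]
For $m\geq 2$, the analogous choice $\Lambda=s_{m,\eps}$ produces $U^{N}_{\eps,m}(x):=s_{m,\eps}^{2/(p_\eps-1)}\,w_\eps(s_{m,\eps}\,x)$, a radial solution of \eqref{sceq}, \eqref{nbc} with exactly $m-1$ interior zeros, which by uniqueness coincides up to sign with the solution of Theorem \ref{explicit:thm_Neumann}; evaluating it at $\delta_{m,\eps}=0$ and $\delta_{1,\eps}=1$ (and using Theorem \ref{explicit:thm_Neumann} with $k=m$ and $k=1$) gives corresponding limits for $s_{m,\eps}^{2/(p_\eps-1)}$ and for $s_{m,\eps}^{2/(p_\eps-1)}\,|w_\eps(s_{m,\eps})|$.

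It remains to invert these relations to isolate $r_{m,\eps}$, $|w_\eps'(r_{m,\eps})|$, $s_{m,\eps}$, and $|w_\eps(s_{m,\eps})|$. Raising the first display to the power $(p_\eps-1)/2=\tfrac{2}{n-2}-\tfrac{\eps}{2}$ gives
\[
r_{m,\eps}=D(m,m)^{(p_\eps-1)/2}\,(\kappa_n\eps)^{-(2m-1)(p_\eps-1)/4}(1+o(1));
\]
since $D(m,m)$ is a positive constant and $\eps\log\eps\to 0$, the spurious factors $D(m,m)^{-\eps/2}$ and $(\kappa_n\eps)^{(2m-1)\eps/4}$ both converge to $1$, yielding $r_{m,\eps}(\kappa_n\eps)^{(2m-1)/(n-2)}\to D(m,m)^{2/(n-2)}$, which after inserting the explicit value of $D(m,m)$ simplifies to the constant $(n(n-2))^{1/2}\,m^{1/(2-n)}\,\Gamma(m)^{-2/(n-2)}$ stated in Corollary \ref{ODE:coro}. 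The remaining three limits are obtained by the same procedure: divide the appropriate pair of displays and perform the same routine bookkeeping of $\eps$-dependent exponents. No substantive obstacle is expected; the only mild delicacy is keeping track of the $O(\eps)$ perturbations of the exponent $2/(p_\eps-1)$ when passing to the leading asymptotics, which is straightforward once the rescaling is in place.
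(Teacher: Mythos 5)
Your proposal is correct and follows essentially the same route as the paper: rescale $w_\eps$ by $r_{m,\eps}$ (resp.\ $s_{m,\eps}$) to produce the radial Dirichlet (resp.\ Neumann) solution in $B$ with $m-1$ interior zeros, apply Theorems \ref{explicit:thm_Dirichlet} and \ref{explicit:thm_Neumann} at $k=m$ and $k=1$, and then remove the $O(\eps)$ perturbations in the exponents exactly as the paper does via \eqref{sim}. No substantive difference.
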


We now discuss in more detail the proofs of our main results and the relationship with Corollary \ref{ODE:coro}.  As mentioned earlier, the proof of Theorems \ref{explicit:thm_Dirichlet} and \ref{explicit:thm_Neumann} is intertwined and holds a close relationship with the solution of \eqref{eq:radial}, since we base our induction on rescalings using the sequence
\begin{align*}
 0=s_{1,\eps}<r_{1,\eps}<s_{2,\eps}<r_{2,\eps}<s_{3,\eps}<\ldots<s_{i,\eps}< r_{i,\eps}<\ldots 
\end{align*}

To explain our approach, consider first
\begin{align*}
\text{the \emph{positive} solution $v_{1,\eps}$ of the Dirichlet problem \eqref{sceq}, \eqref{dbc}.}   
\end{align*}
In this case the limiting behavior of the sequences $v_{1,\eps}(0)$ and $v'_{1,\eps}(1)$ is fully characterized by \eqref{AP1}, \eqref{AP2p}.  For each $\eps>0$ small, these limits can be seen also as an invertible nonlinear system of equations, that is,
\begin{align*}
\eps v_{1,\eps}^2(0) = D(1,1)+o(1),\qquad 
\eps^{-\frac{1}{2}} |v'_{1,\eps}(1)| &= Z(1,1)+o(1).
\end{align*}
Here $\eps$ and $\eps^{-\frac{1}{2}}$ would be the coefficients, $v_{1,\eps}(0)$ and $|v'_{1,\eps}(1)|$ the unknown variables, while $D(1,1)+o(1)$ and $Z(1,1)+o(1)$ are suitable right-hand sides (note that $o(1)$ is not explicit and may depend on $\eps$, but these terms vanish after taking the limit as $\eps\to0$). Consider now 
\begin{align*}
\text{the solution $v_{2,\eps}$ of the Neumann problem \eqref{sceq}, \eqref{nbc} with one interior zero and $v_{2,\eps}(0)>0$.} 
\end{align*}
We wish to establish the behavior of the sequences
\begin{align}\label{vars}
v_{2,\eps}(0),\quad v_{2,\eps}'(\rho_{1,\eps}),\quad  v_{2,\eps}(1),\quad \text{ and }\quad \rho_{1,\eps}, 
\end{align}
where $\rho_{1,\eps}$ is the unique zero of $v_{2,\eps}$, and $\delta_{1,\eps}=1$, $\delta_{2,\eps}=0$ are the unique critical points.  To determine \eqref{vars}, we require a suitable system of \emph{four} equations.  Since $v_{1,\eps}$ and $v_{2,\eps}$ are related (by uniqueness) via the rescaling
\begin{align*}
v_{1,\eps}(x)=\rho_{1,\eps}^{\frac{2(n-2)}{4-\eps(n-2)}}  v_{2,\eps}(x \rho_{1,\eps}),\qquad x\in(0,1),\ \eps\in(0,\frac{4}{n+2}),
\end{align*}
we can use the known information on $v_{1,\eps}$ to obtain two equations (Lemma \ref{rescaled} for $m=2$).  The other two equations (Lemma \ref{2eqs}) are obtained by some computations using the equation on $(\rho_{1,\eps},0)$ (see Lemma \ref{intlem}) together with energy estimates (Lemma \ref{Ebound}) and a normalization argument (Lemma \ref{prop:lemma}).  The resulting system of equations is nonlinear, but a solution can be found directly by substitution.

Next, consider
\begin{align*}
\text{the solution $v_{3,\eps}$ of the Dirichlet problem \eqref{sceq}, \eqref{dbc} with one interior zero and $v_{1,\eps}(0)>0$.} 
\end{align*}
In this case, the solution has two zeros, $\rho_{2,\eps}<\rho_{1,\eps}=1$ and two critical points, $\delta_{2,\eps}=0<\delta_{1,\eps}$, and we study the behavior of the sequences 
\begin{align*}
v_{3,\eps}(0), \quad
v_{3,\eps}'(\rho_{2,\eps}), \quad
v_{3,\eps}(\delta_{1,\eps}),\quad 
v_{3,\eps}'(1),\quad 
\rho_{2,\eps},\quad 
\delta_{1,\eps}.
\end{align*}
Therefore, we need a suitable system of 6 equations. As before, using a rescaling we can relate $v_{3,\eps}$ and $v_{2,\eps}$ to obtain 4 equations (Lemma \ref{Dir:res} for $m=2$); the other two (equations \eqref{A1eps} and \eqref{A2eps}) are obtained using a radial Pohozaev identity (see \eqref{poho1} in the proof of Lemma \ref{L1}) and a blow-up procedure in the set $(\delta_{1,\eps},1)$ (Lemma \ref{2nrD:lemma2}). Here the bounds obtained in \cite{DIP17} are crucial, see Theorem \ref{dip:thm}.

Finally, we can consider in a similar fashion $v_{4,\eps},$ $v_{5,\eps},$ $v_{6,\eps},\ldots$ and show Theorems \ref{explicit:thm_Dirichlet} and \ref{explicit:thm_Neumann} by induction.  In the proof that we present below in Sections \ref{N:sec} and \ref{D:sec}, we follow this inductive strategy starting from explicit formulas for the constants $d$, $D$, $z$, $Z$, $\widetilde d$, $\widetilde D$, $\widetilde z$, and $\widetilde Z$.   To deduce these formulas in the first place, we argued exactly as described above but with unknown coefficients, and we obtain recurrence identities relating $d$, $D$, $z$, $Z$, $\widetilde d$, $\widetilde D$, $\widetilde z$, and $\widetilde Z$ that unequivocally define them. These relations were then developed for $m=1,2,3,4,\ldots$ and $k=1,2,3,\ldots$, from which general formulas can be deduced. This approach requires hundreds\textemdash if not thousands\textemdash of algebraic manipulations and, to avoid any calculation mistake, we used a symbolic calculus software\footnote{Mathematica 11.1.1.0, Wolfram Research Inc., 2017.}.  Although this implementation was in itself a nontrivial computational challenge, to keep this paper short, we only present the rigorous proof by induction starting from known formulas for the coefficients.

\medskip

To close this introduction, we refer to  \cite{BDE00} 
for a broader perspective on the problem \eqref{sceq}, \eqref{dbc}; in particular, using a dynamical-system approach, the authors in \cite{BDE00} provide a full classification of the set of all positive and nodal (regular and singular) radial solutions of the equation
\begin{align*}
 -\Delta u = \lambda u + |u|^{p-1}u\quad \text{ in }B,\qquad u=0\quad \text{ on }\partial B,
\end{align*}
without any restriction on $\lambda\in\R$ and for all $p>1$.

\medskip

The paper is organized as follows. In Section \ref{pos:sec} we recall the rates for positive solutions of the Dirichlet problem (this serves as the inductive base in our argument). In Section \ref{N:sec} we study the radial Neumann problem whereas Section \ref{D:sec} is devoted to the Dirichlet problem.  The proof of Theorems  \ref{explicit:thm_Dirichlet} and \ref{explicit:thm_Neumann} can be found at the end of Section \ref{D:sec}. Section \ref{RC:sec} contains the proof of Theorem \ref{pointwise:coro} and Corollaries \ref{dp:coro}, \ref{nbt:coro}, and \ref{ODE:coro}.

\section{Positive Dirichlet solution}\label{pos:sec}
In this section we establish the induction base of our argument to show Theorems \ref{explicit:thm_Dirichlet} and \ref{explicit:thm_Neumann}.  Let $u_\eps$ be the unique positive radial solution of \eqref{sceq}, \eqref{dbc}.  The function $u_\eps$ has a unique critical point $\delta_{1,\eps}=\delta_1=0$ and a unique zero $\rho_{1,\eps}=\rho_1=1$.

\begin{theo}\label{1nD:thm}
 Let $u_\varepsilon$ be the unique positive radial solution of
 \begin{align*}
 -\Delta u_\varepsilon = u_\varepsilon^{\frac{n+2}{n-2}-\varepsilon} \quad\text{ in }B,\qquad u_\varepsilon=0\quad \text{ on }\partial B.
\end{align*}
Then
\begin{align}
 &\lim_{\varepsilon\to 0}(\kappa_n\epsilon)^{\frac{1}{2}} |u_\varepsilon(0)| = (n(n-2))^{\frac{n-2}{4}}=D(1,1), \label{1nD_aux1}\\
 &\lim_{\varepsilon\to 0} (\kappa_n\epsilon)^{-\frac{1}{2}}|u'_\varepsilon(1)|= n^{\frac{n-2}{4}}(n-2)^{\frac{n+2}{4}}=Z(1,1), \label{1nD_aux2}
\end{align}
where $\kappa_n>0$ is given by \eqref{Cchi} and $Z(1,1),$ $D(1,1)$ are as in Theorem \ref{explicit:thm_Dirichlet}.
\end{theo}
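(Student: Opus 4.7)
The plan is to deduce both limits directly from the classical Atkinson--Peletier-type asymptotics \eqref{AP1} and \eqref{AP2p} already recalled in the introduction; once those two identities are in hand, the theorem reduces to a reformulation in terms of $\kappa_n$ (see \eqref{Cchi}) matching the notation $D(1,1), Z(1,1)$ fixed in Theorem \ref{explicit:thm_Dirichlet}. No new analysis is needed for this base case: only a careful bookkeeping of constants.

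For \eqref{1nD_aux1}, I would observe that, by the definition of $\kappa_n$, the right-hand side of \eqref{AP1} equals $(n(n-2))^{(n-2)/2}/\kappa_n$. Hence \eqref{AP1} rewrites as $\lim_{\eps\to 0}\kappa_n\eps\,u_\eps^2(0) = (n(n-2))^{(n-2)/2}$, and taking square roots (legitimate since $u_\eps>0$) yields $\lim_{\eps\to 0}(\kappa_n\eps)^{1/2}u_\eps(0)=(n(n-2))^{(n-2)/4}=D(1,1)$, which is \eqref{1nD_aux1}.

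For \eqref{1nD_aux2}, I would start from \eqref{AP2p} and note that the parenthesised factor there is exactly $\kappa_n^{1/2}$. Dividing by $(\kappa_n\eps)^{1/2}$ therefore leaves the limit constant $(n-2)(n(n-2))^{(n-2)/4}$; rewriting $(n-2)\cdot(n-2)^{(n-2)/4} = (n-2)^{(n+2)/4}$ shows that this equals $n^{(n-2)/4}(n-2)^{(n+2)/4}=Z(1,1)$, as claimed.

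The only genuine content of this base case is thus the validity of \eqref{AP1} and \eqref{AP2p} themselves. The former was proved by Atkinson and Peletier via Emden--Fowler ODE analysis in \cite{AP87} for all $n\geq 3$ (and revisited via a Pohozaev/Green-function PDE approach in \cite{BP89} for $n=3$), while the latter follows from the uniform $C^{1,\alpha}(\overline{B})$ convergence at the boundary established in \cite[Proposition 1]{H91} combined with the explicit profile \eqref{AP2}. I would simply invoke these references as the starting point of the induction rather than reprove them. The main ``obstacle'' is therefore purely arithmetical: I would verify the exponents of $n$ and $n-2$ in $D(1,1)$ and $Z(1,1)$ by direct computation, since any miscount of constants here would propagate through the inductive argument carried out in the later sections.
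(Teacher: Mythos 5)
Your proposal is correct and follows essentially the same route as the paper, whose proof of this theorem consists precisely of citing \cite[Theorems A and B]{AP87} and \cite[Proposition 1]{H91} via \eqref{AP1} and \eqref{AP2p}. Your constant bookkeeping (recognizing the factor $1/\kappa_n$ in \eqref{AP1} and $\kappa_n^{1/2}$ in \eqref{AP2p}, and matching with $D(1,1)$ and $Z(1,1)$) is accurate, so nothing further is needed.
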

\begin{proof} This follows from \cite[Theorems A and B]{AP87} and \cite[Proposition 1]{H91}, see \eqref{AP1}, \eqref{AP2p}.
\end{proof}

\begin{remark}\label{C:rmk}
 Observe that, if $w_\eps$ is the positive solution of
 \begin{align}\label{posD}
 -\Delta w_\eps=n(n-2) w^{\frac{n+2}{n-2}-\eps} \text{ in } B,\qquad w_\eps=0 \text{ on } \partial B,
  \end{align}
then the constants in \eqref{1nD_aux1}, \eqref{1nD_aux2} are simpler. Indeed, let $\kappa_n$ as in \eqref{Cchi} and use the rescaling $w_\varepsilon(x)=(n(n-2))^{\frac{n-2}{\varepsilon(n-2)-4}} u_\varepsilon(x)$; then
 \begin{align*}
 &\lim_{\varepsilon\to 0}(\kappa_n\epsilon)^\frac{1}{2}w_\varepsilon(0) = 1\qquad \text{ and }\qquad 
 \lim_{\varepsilon\to 0} (\kappa_n\epsilon)^{-\frac{1}{2}}|w'_\varepsilon(1)|=n-2.
  \end{align*}
\end{remark}

\section{The radial Neumann problem}\label{N:sec}
As explained in the introduction, the proofs of Theorems \ref{explicit:thm_Dirichlet} and \ref{explicit:thm_Neumann} are intertwined and performed together by induction. In the previous section we proved the starting point of the induction procedure: Theorem \ref{explicit:thm_Dirichlet} for $m=1$. The purpose of this section is to prove the following.

\begin{prop}\label{prop1}
  Let $m\geq 2$. If Theorem \ref{explicit:thm_Dirichlet} (Dirichlet case) holds for radial solutions with $m-2$ interior zeros, then Theorem \ref{explicit:thm_Neumann} (Neumann case) holds for radial solutions with $m-1$ interior zeros.
\end{prop}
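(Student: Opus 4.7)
My plan is to mirror, for arbitrary $m\geq 2$, the strategy outlined in the introduction for the case $m=2$. Let $u_\eps$ be a radial Neumann solution of \eqref{sceq}, \eqref{nbc} with $m-1$ interior zeros, denote by $\rho_{1,\eps}$ its largest interior zero, and set $\alpha_\eps := \tfrac{2(n-2)}{4-\eps(n-2)}$. The decisive observation is that the rescaled function
\begin{equation*}
v_\eps(x) := \rho_{1,\eps}^{\alpha_\eps}\, u_\eps(\rho_{1,\eps}\, x), \qquad x\in\overline{B},
\end{equation*}
solves the Dirichlet problem \eqref{sceq}, \eqref{dbc} on $B$ and has exactly $m-2$ interior zeros, since the largest interior zero of $u_\eps$ is transported to the boundary of $B$ while every other zero and critical point of $u_\eps$ in $(0,\rho_{1,\eps})$ becomes the corresponding object of $v_\eps$ in $(0,1)$. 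By the inductive hypothesis, Theorem \ref{explicit:thm_Dirichlet} applied with $m-1$ in place of $m$ supplies sharp asymptotic rates and explicit constants for every critical value, critical position, zero derivative, and zero position of $v_\eps$.

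Next, I would translate the $v_\eps$-estimates back into $u_\eps$-estimates through the scaling identities $\delta_{j,\eps} = \rho_{1,\eps}\,\delta_{j-1,\eps}^{\mathrm{Dir}}$ and $\rho_{k,\eps} = \rho_{1,\eps}\,\rho_{k,\eps}^{\mathrm{Dir}}$, together with $|u_\eps(\delta_{j,\eps})| = \rho_{1,\eps}^{-\alpha_\eps}|v_\eps(\delta_{j-1,\eps}^{\mathrm{Dir}})|$ and $|u_\eps'(\rho_{k,\eps})| = \rho_{1,\eps}^{-\alpha_\eps-1}|v_\eps'(\rho_{k,\eps}^{\mathrm{Dir}})|$. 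Written out for the admissible ranges of indices, these yield $4m-6$ asymptotic identities relating the $4m-4$ Neumann unknowns (the $m$ values $|u_\eps(\delta_{j,\eps})|$, the $m-2$ interior critical positions $\delta_{j,\eps}$ for $j=2,\dots,m-1$, the $m-1$ derivatives $|u_\eps'(\rho_{k,\eps})|$, and the $m-1$ zero positions $\rho_{k,\eps}$, which include $\rho_{1,\eps}$ itself). Crucially, $|u_\eps(1)|=|u_\eps(\delta_{1,\eps})|$ and $\rho_{1,\eps}$ have no direct counterpart on the Dirichlet side, so the rescaling alone leaves the system two equations short.

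To close the system I would analyze $u_\eps$ on the outer annulus $A_\eps:=\{\rho_{1,\eps}<|x|<1\}$, where $u_\eps$ has constant sign and vanishes only at the inner edge. One equation comes from a radial Pohozaev-type identity on $B$: the Neumann condition $u_\eps'(1)=0$ collapses the boundary integrand at $r=1$ to a single $|u_\eps(1)|^{2n/(n-2)-\eps}$ term, while the bulk side is $\tfrac{(n-2)^2\eps}{4n}(1+o(1))\int_B |u_\eps|^{2n/(n-2)-\eps}$. Combined with an energy estimate showing that this integral converges, as $\eps\to 0$, to $m$ times the universal Talenti bubble energy (which follows from the inductive hypothesis applied to each of the $m-1$ interior bubbles together with a concentration argument on the outermost lump), this yields one sharp equation for $|u_\eps(1)|$. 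The second equation arises from a normalization/blow-up of $u_\eps$ on $A_\eps$: renormalizing by $|u_\eps(1)|$ (or a comparable scale) produces a bounded limit profile whose explicit form, together with the $k=1$ rescaling identity for $|u_\eps'(\rho_{1,\eps})|$, pins down $\rho_{1,\eps}$. With these two extra equations in hand, the full system closes and the formulas for $\widetilde D,\widetilde d,\widetilde Z,\widetilde z$ follow by direct algebraic substitution from $D(\cdot,m-1),d(\cdot,m-1),Z(\cdot,m-1),z(\cdot,m-1)$.

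The main obstacle is the identification of sharp constants on $A_\eps$: both the energy estimate and the limiting profile must be carried out with the correct multiplicative constants (not merely the right rates), since they are responsible for the factor $(m-1)^{(n-2)/(2n)}$ appearing in $\widetilde D(k,m)$ and, more generally, for all the $(m-1)$-powers that distinguish $\widetilde D, \widetilde d, \widetilde Z, \widetilde z$ from $D, d, Z, z$. Once these sharp ingredients are secured, verifying that the proposed formulas for $\widetilde D,\widetilde d,\widetilde Z,\widetilde z$ satisfy the recurrences forced by the rescaling and the two outer-annulus identities is a routine, if lengthy, algebraic exercise of the kind the authors report having automated in \textit{Mathematica}.
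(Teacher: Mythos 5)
Your overall skeleton is the right one and matches the paper's: rescale $u_\eps$ by its largest zero to land on the Dirichlet solution with $m-2$ interior zeros, import the $4m-6$ identities from the inductive hypothesis (your count of unknowns and equations is correct), and close the system with two further relations coming from the outer annulus $(\rho_{1,\eps},1)$. The genuine problem is the first of your two closing relations. You claim that $\int_B|u_\eps|^{2^*-\eps}$ converges to $m$ times the Talenti bubble energy, "the inductive hypothesis applied to each of the $m-1$ interior bubbles together with a concentration argument on the outermost lump". This is false: the outermost nodal region of the Neumann solution does \emph{not} concentrate. There $|u_\eps|\leq|u_\eps(1)|\sim\eps^{\frac{n-2}{2n}}\to 0$, so its contribution to the energy is $O(\eps)$, and the correct limit is $(m-1)S_n^{\frac n2}$ (the $m-1$ blowing-up interior bubbles only; equivalently, the rescaled function is the Dirichlet solution with $m-2$ interior zeros, i.e.\ $m-1$ nodal regions). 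This is not a cosmetic slip: feeding $mS_n^{\frac n2}$ into your Pohozaev identity with $u_\eps'(1)=0$ gives $\lim|u_\eps(1)|(\kappa_n\eps)^{-\frac{n-2}{2n}}=(n(n-2))^{\frac{n-2}{4}}m^{\frac{n-2}{2n}}$, whereas the constant you must produce is $\widetilde D(1,m)=(n(n-2))^{\frac{n-2}{4}}(m-1)^{\frac{n-2}{2n}}$; with the correct value $(m-1)S_n^{\frac n2}$ the computation does match $\widetilde D(1,m)$. So as written the proposal proves the wrong constant, and it is internally inconsistent: your second closing equation normalizes by $|u_\eps(1)|$ and expects a bounded (in fact, constant) limit profile on the annulus, which presupposes exactly the non-concentration you deny in the first equation. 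Moreover, justifying that the outer annulus carries $o(1)$ energy already requires knowing $|u_\eps(1)|\to 0$ (or an equivalent a priori bound), so this step cannot be waved through; it needs the kind of argument the paper runs before closing the system.

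For comparison, the paper avoids any sharp energy limit in this step: it only uses a rough energy \emph{bound} (Lemma \ref{Ebound}) to get $|u_\eps(1)|$ bounded and $\rho_{1,\eps}$ bounded away from $1$ (Lemma \ref{bds}), then derives an exact ODE identity for $|u_\eps(1)|$ by integrating the radial equation twice on $(\rho_{1,\eps},1)$ (Lemma \ref{intlem}), and proves $\rho_{1,\eps}\to 0$ and $u_\eps/u_\eps(1)\to 1$ uniformly on $[a,1]$ (Lemma \ref{prop:lemma}); this yields the two relations $|u_\eps(\delta_{1,\eps})|^{\frac4{n-2}-\eps}\rho_{1,\eps}^{2-n}\to n(n-2)$ and $|u_\eps'(\rho_{1,\eps})|\rho_{1,\eps}^{n-1}|u_\eps(\delta_{1,\eps})|^{\eps-\frac{n+2}{n-2}}\to n^{-1}$, which together with the rescaled identity \eqref{R} for $k=1$ close the system. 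The Pohozaev identity and the blow-up to the bubble $U$ are reserved for the Dirichlet step (Proposition \ref{prop2}). Your Pohozaev-plus-energy route could be repaired by replacing $mS_n^{\frac n2}$ with $(m-1)S_n^{\frac n2}$ and supplying the vanishing of the outer lump, but as stated the key quantitative input is wrong and the argument would not reproduce the formulas of Theorem \ref{explicit:thm_Neumann}.
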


Let $n\geq 3$, $m\geq 2$, $\eps\in (0,\frac{4}{n-2})$, and let $u_\eps$ be a solution of \eqref{sceq}, \eqref{nbc} with $m-1$ interior zeros.  Let $1= \delta_{1,\eps}>\delta_{2,\eps}>\ldots >\delta_{m,\eps}=0$ be the decreasing sequence of all the critical points of $u_\eps$ in $[0,1]$ and $1> \rho_{1,\eps}>\ldots>\rho_{m-1,\eps}>0$ the decreasing sequence of all the zeros of $u_\eps$ in $[0,1]$.   The constants $d$, $D$, $z$, $Z$, $\widetilde d$, $\widetilde D$, $\widetilde z$, and $\widetilde Z$ are explicit constants given by Theorems \ref{explicit:thm_Dirichlet} and \ref{explicit:thm_Neumann}. The constant $\kappa_n$ is given in \eqref{Cchi}. Observe that, in virtue of the Neumann boundary conditions and the uniqueness of the Cauchy problem for radial solutions of \eqref{sceq} (or by Hopf's Lemma), we have
\begin{align}\label{unot0}
 u(1)\neq 0.
\end{align}


\begin{lemma}\label{rescaled}
  Let $m\geq 2$. If Theorem \ref{explicit:thm_Dirichlet} (Dirichlet case) holds for radial solutions with $m-2$ interior zeros, then
\begin{align}
\lim_{\eps\to 0}\rho_{1,\eps}^{\frac{2(n-2)}{4-\eps(n-2)}} 
  |u_\eps(\delta_{k,\eps})|\ (\kappa_n\epsilon)^{\frac{2k-3}{2}}
      &=D(k-1,m-1),&&  \text{$k\in\{2,\ldots,m\}$},\label{N21}\\
   \lim_{\eps\to 0}\delta_{k,\eps}\rho_{1,\eps}^{-1}\ 
   (\kappa_n\epsilon)^{-\frac{2 (k-1) n-2}{n(n-2)}}
      &=d(k-1,m-1),&&  \text{$k\in\{2,\ldots,m-1\}$, $m\geq 3$}\label{N2},\\
        \lim_{\eps\to 0}\rho_{1,\eps}^{\frac{2(n-2)}{4-\eps(n-2)}+1}|u_\eps'(\rho_{k,\eps})|\ (\kappa_n\epsilon)^{\frac{2 kn-3n+2}{2 (n-2)}}
      &=Z(k,m-1),&&  \text{$k\in\{1,\ldots,m-1\}$},\label{R}\\
   \lim_{\eps\to 0}\rho_{k,\eps}\rho_{1,\eps}^{-1}\ 
   (\kappa_n\epsilon)^{-\frac{2 (k-1)}{n-2}}
   &=z(k,m-1),&&  \text{$k\in\{2,\ldots,m-1\}$, $m\geq 3$},\label{R1}
\end{align}
where $D$, $d,$ $R,$ and $r$ are as in Theorem \ref{explicit:thm_Dirichlet}. 
\end{lemma}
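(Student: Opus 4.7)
The plan is to cut the Neumann solution $u_\eps$ at its largest interior zero $\rho_{1,\eps}$ and rescale the restriction to $[0,\rho_{1,\eps}]$ so that it becomes a radial Dirichlet solution on the unit ball with exactly $m-2$ interior zeros; the inductive hypothesis on the Dirichlet case will then deliver the four claimed limits by direct substitution. The key observation is that inside $[0,\rho_{1,\eps}]$ the function $u_\eps$ vanishes at the endpoint $\rho_{1,\eps}$ and has precisely the interior zeros $\rho_{2,\eps}>\ldots>\rho_{m-1,\eps}$, that is, $m-2$ of them.

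Concretely, let $\alpha_\eps := \frac{2(n-2)}{4-\eps(n-2)} = \frac{2}{\frac{4}{n-2}-\eps}$ be the scaling exponent dictated by the homogeneity of the nonlinearity, and define
\[
v_\eps(x) := \rho_{1,\eps}^{\alpha_\eps}\, u_\eps(\rho_{1,\eps}\, x), \qquad x\in\overline{B}.
\]
A direct chain-rule computation shows that $v_\eps$ solves $-\Delta v_\eps = |v_\eps|^{\frac{4}{n-2}-\eps}v_\eps$ in $B$ with $v_\eps=0$ on $\partial B$, and \eqref{unot0} guarantees $v_\eps\not\equiv 0$. Its interior zeros and critical points are
\[
\tilde\rho_{k,\eps} := \rho_{k,\eps}/\rho_{1,\eps}\ \ (k=1,\ldots,m-1),\qquad \tilde\delta_{k,\eps} := \delta_{k+1,\eps}/\rho_{1,\eps}\ \ (k=1,\ldots,m-1),
\]
so $v_\eps$ is a Dirichlet solution with exactly $m-2$ interior zeros, and the inductive hypothesis (Theorem \ref{explicit:thm_Dirichlet} with $m$ replaced by $m-1$) applies to it.

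Writing out the four Dirichlet asymptotics for $v_\eps$ with indices $k\in\{1,\ldots,m-1\}$ (resp.\ $\{1,\ldots,m-2\}$, $\{1,\ldots,m-1\}$, $\{2,\ldots,m-1\}$) and substituting
\[
v_\eps(\tilde\delta_{k,\eps}) = \rho_{1,\eps}^{\alpha_\eps}\, u_\eps(\delta_{k+1,\eps}), \qquad v_\eps'(\tilde\rho_{k,\eps}) = \rho_{1,\eps}^{\alpha_\eps+1}\, u_\eps'(\rho_{k,\eps}),
\]
together with the identifications above, produces the four displays \eqref{N21}–\eqref{R1}: the first two require the index shift $k\mapsto k-1$ to convert the Dirichlet labels into the Neumann labels ($\delta_{k+1,\eps}$ becomes $\delta_{k,\eps}$ for $k\in\{2,\ldots,m\}$, $\{2,\ldots,m-1\}$ respectively), while the last two carry through with the same $k$. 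All exponents of $\kappa_n\eps$ match after trivial simplification (e.g.\ $\frac{2(k-1)n-2}{n(n-2)} = \frac{2((k-1)n-1)}{n(n-2)}$).

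The only real difficulty is bookkeeping: one must account carefully for the fact that the Neumann ordering carries one extra critical point ($\delta_{1,\eps}=1$) and one extra zero ($\rho_{1,\eps}$, which now plays the role of the Dirichlet boundary) compared with the Dirichlet ordering inherited by $v_\eps$. No new analytic estimates are required beyond those already granted by the inductive Dirichlet statement, so the lemma reduces, in effect, to a scaling identity.
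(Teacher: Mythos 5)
Your proposal is correct and follows essentially the same route as the paper: rescale by $w(x)=\rho_{1,\eps}^{\frac{2(n-2)}{4-\eps(n-2)}}u_\eps(\rho_{1,\eps}x)$, observe this is a Dirichlet solution with $m-2$ interior zeros, apply the inductive Dirichlet asymptotics to it, and translate back via $\widetilde\delta_k\rho_{1,\eps}=\delta_{k+1,\eps}$, $\widetilde\rho_k\rho_{1,\eps}=\rho_{k,\eps}$. The index bookkeeping and exponents you give match the paper's argument exactly.
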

\begin{proof}
To ease notation we omit the $\eps$ dependency of $u_\eps, \delta_{k,\eps},\rho_{k,\eps}$. Observe that $w:B\to\R$ given by $w(x):=\rho_1^{\frac{2(n-2)}{4-\eps(n-2)}}  u(x \rho_1)$ is a Dirichlet solution with $m-2$ interior zeros. If  $1> \widetilde \delta_1>\ldots >\widetilde \delta_{m-1}=0$ and $1= \widetilde \rho_1>\ldots>\widetilde \rho_{m-1}>0$ are the critical points and roots of $w$ then, since Theorem \ref{explicit:thm_Dirichlet} holds for solutions with $m-2$ interior zeros, we have that
\begin{align*}
   \lim_{\eps\to 0}|w(\widetilde\delta_k)|\ (\kappa_n\epsilon)^{\frac{2k-1}{2}}
      &=D(k,m-1)&&  \text{for $k\in\{1,\ldots,m-1\}$},\\
   \lim_{\eps\to 0}\widetilde\delta_{k}\ 
   (\kappa_n\epsilon)^{-\frac{2(k n-1)}{n(n-2)}}
      &=d(k,m-1) &&  \text{for $k\in\{1,\ldots,m-2\}$ if $m\geq 3$},\\
   \lim_{\eps\to 0}|w'(\widetilde\rho_k)|\ (\kappa_n\epsilon)^{\frac{2 kn-3n+2}{2 (n-2)}}
      &=Z(k,m-1) &&  \text{for $k\in\{1,\ldots,m-1\}$},\\
   \lim_{\eps\to 0}\widetilde\rho_{k}\ 
   (\kappa_n\epsilon)^{-\frac{2 (k-1)}{n-2}}
   &=z(k,m-1)&&  \text{for $k\in\{2,\ldots,m-1\}$  if $m\geq 3$},
  \end{align*}
The result now follows from the definition of $w$ and the fact that $\widetilde \delta_{k}\rho_1=\delta_{k+1}$ and $\widetilde \rho_k \rho_1=\rho_k$ for $k\in\{1,\ldots,m-1\}$.
\end{proof}

\begin{lemma}\label{intlem}
 It holds that 
 \begin{align*}
 |u_\eps(1)|^{\frac{4}{n-2}-\eps}=\frac{(n-2)\rho_{1,\eps}^{n-2}}{\int_{\rho_{1,\eps}}^1 \Big|\frac{u_\eps(s)}{u_\eps(1)}\Big|^{\frac{4}{n-2}-\eps}\frac{u_\eps(s)}{u_\eps(1)}s( s^{n-2} -  \rho_{1,\eps}^{n-2})\ ds}.
 \end{align*}
\end{lemma}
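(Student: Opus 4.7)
The plan is to integrate the radial ODE twice, using the two available ``boundary'' data, namely the Neumann condition $u_\eps'(1)=0$ at the outer edge and the vanishing $u_\eps(\rho_{1,\eps})=0$ at the largest interior zero. Writing the equation in the form
\[
 -(r^{n-1} u_\eps'(r))' = r^{n-1} |u_\eps(r)|^{\frac{4}{n-2}-\eps} u_\eps(r)
\]
and integrating from $r$ to $1$, the boundary term at $1$ drops out because of \eqref{nbc}, yielding
\[
 r^{n-1} u_\eps'(r) = \int_r^1 s^{n-1} |u_\eps(s)|^{\frac{4}{n-2}-\eps} u_\eps(s)\, ds.
\]

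Next I would integrate this identity once more from $\rho_{1,\eps}$ to $1$. On the left I get $u_\eps(1)-u_\eps(\rho_{1,\eps}) = u_\eps(1)$, while the right-hand side becomes a double integral to which Fubini applies. Computing the inner integral
\[
 \int_{\rho_{1,\eps}}^s r^{1-n}\, dr = \frac{\rho_{1,\eps}^{2-n}-s^{2-n}}{n-2}
\]
and using the algebraic identity $s^{n-1}(\rho_{1,\eps}^{2-n}-s^{2-n})\rho_{1,\eps}^{n-2} = s(s^{n-2}-\rho_{1,\eps}^{n-2})$, I arrive at
\[
 (n-2)\rho_{1,\eps}^{n-2}\, u_\eps(1) = \int_{\rho_{1,\eps}}^1 s\, |u_\eps(s)|^{\frac{4}{n-2}-\eps} u_\eps(s)\, (s^{n-2}-\rho_{1,\eps}^{n-2})\, ds.
\]

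Finally, by \eqref{unot0} we have $u_\eps(1)\neq 0$, so I can factor out $|u_\eps(1)|^{\frac{4}{n-2}-\eps} u_\eps(1)$ from the integrand using the sign-preserving identity
\[
 \frac{|u_\eps(s)|^{\frac{4}{n-2}-\eps} u_\eps(s)}{|u_\eps(1)|^{\frac{4}{n-2}-\eps} u_\eps(1)} = \left|\frac{u_\eps(s)}{u_\eps(1)}\right|^{\frac{4}{n-2}-\eps} \frac{u_\eps(s)}{u_\eps(1)},
\]
and then cancel the common factor $u_\eps(1)$ on both sides, leaving $|u_\eps(1)|^{\frac{4}{n-2}-\eps}$ multiplied by the normalized integral. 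Solving for this quantity gives exactly the stated formula. No real obstacle arises; the only delicate point is keeping track of signs so that the absolute values combine correctly, which is exactly why one uses \eqref{unot0} to justify dividing by $u_\eps(1)$.
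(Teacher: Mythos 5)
Your proof is correct and follows essentially the same route as the paper: integrate the radial equation from $r$ to $1$ using $u_\eps'(1)=0$, integrate once more over $(\rho_{1,\eps},1)$ using $u_\eps(\rho_{1,\eps})=0$, and normalize by $u_\eps(1)\neq 0$; the only cosmetic difference is that you collapse the double integral by Fubini while the paper uses an integration by parts, which yields the same kernel $s(s^{n-2}-\rho_{1,\eps}^{n-2})$.
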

\begin{proof}
To ease notation we omit the $\eps$ dependency on $u_\eps$ and $\rho_{1,\eps}$.  Assume without loss of generality that $u(1)>0$ (recall \eqref{unot0}).  Using that 
\begin{align}\label{radeq}
 -(u' r^{n-1})' = |u|^{\frac{4}{n-2}-\eps}u \ r^{n-1}\quad \text{ in }(0,1),\qquad u'(0)=u'(1)=0,
\end{align}
we have
\begin{align}\label{1ibyp}
\int_{r}^1 |u(s)|^{\frac{4}{n-2}-\eps}u(s) \ s^{n-1}\ ds =\int_{r}^1 -(u'(s) s^{n-1})'\ ds=u'(r) r^{n-1}\qquad \text{ for $r\in(0,1)$}.
\end{align}
Then, since $u>0$ in $(\rho_1,1]$,
\begin{align*}
u(1)= \int_{\rho_1}^1 u'(t)\ dt
=\int_{\rho_1}^1 t^{1-n}\int_{t}^1 u(s)^{\frac{n+2}{n-2}-\eps} \ s^{n-1}\ ds\ dt>0
.
\end{align*}
Let $f(t)=\frac{t^{2-n}}{2-n}$ and $g(t):=\int_{t}^1 u(s)^{\frac{n+2}{n-2}-\eps} \ s^{n-1}\ ds$. Then  $f'(t):=t^{1-n}$, $g'(t)=-u(t)^{\frac{n+2}{n-2}-\eps} t^{n-1}$ and $g(1)=0$, and since 
$\int_{\rho_1}^1 f'g= fg\mid_{\rho_1}^1-\int_{\rho_1}^1 fg'= -f(\rho_1)g(\rho_1)-\int_{\rho_1}^1 fg'$ we obtain
\begin{align}
u(1)&=-\frac{\rho_1^{2-n}}{2-n}\int_{\rho_1}^1 u(s)^{\frac{n+2}{n-2}-\eps} \ s^{n-1}\ ds - \int_{\rho_1}^1 \frac{t^{2-n}}{2-n}(-u(t)^{\frac{n+2}{n-2}-\eps} \ t^{n-1})\ dt\label{di}\\
&=\frac{1}{n-2}\Big(\rho_1^{2-n}\int_{\rho_1}^1 u(s)^{\frac{n+2}{n-2}-\eps} \ s^{n-1}\ ds - \int_{\rho_1}^1 tu(t)^{\frac{n+2}{n-2}-\eps}\ dt\Big)\nonumber\\
&=\frac{u(1)^{\frac{n+2}{n-2}-\eps}}{n-2}\Big(\rho_1^{2-n}\int_{\rho_1}^1 \Big(\frac{u(s)}{u(1)}\Big)^{\frac{n+2}{n-2}-\eps}s( s^{n-2} -  \rho_1^{n-2})\ ds\Big),\nonumber
\end{align}
and the claim follows. 
\end{proof}

Define, as usual, $2^*=\frac{2n}{n-2}$ the critical Sobolev exponent.

\begin{lemma}\label{Ebound}
There is $C_1>0$ independent of $\eps$ such that
\begin{align*}
\|u_\eps\|_{L^{2^*-\eps}(B)}+\|\nabla u_\eps\|_{L^{2}(B)}< C_1\qquad \text{ for all $\eps>0$ sufficiently small}.
\end{align*}
\end{lemma}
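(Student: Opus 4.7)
The plan is to combine the testing of the PDE against $u_\eps$ with the radial Pohozaev identity under Neumann boundary conditions, thereby reducing the lemma to an upper bound on $|u_\eps(1)|^{2^*-\eps}/\eps$, and then to obtain this bound using the induction hypothesis via Lemma~\ref{rescaled} together with the structure of the radial ODE.

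First, multiplying \eqref{sceq} by $u_\eps$ and integrating by parts on $B$, the Neumann condition $u_\eps'(1)=0$ eliminates the boundary term, giving
\begin{equation*}
\|\nabla u_\eps\|_{L^2(B)}^2 \;=\; \int_B|u_\eps|^{2^*-\eps}\,dx \;=\; \|u_\eps\|_{L^{2^*-\eps}(B)}^{2^*-\eps}.
\end{equation*}
Hence it suffices to bound either norm. Applying next the classical Pohozaev identity on $B$ with the vector field $x$: on $\partial B$ we have $\nu=x$ and $\nu\cdot\nabla u_\eps=u_\eps'(1)=0$, so all gradient boundary terms vanish and we are left with
\begin{equation*}
\frac{(n-2)\eps}{2(2^*-\eps)}\int_B|u_\eps|^{2^*-\eps}\,dx \;=\; \frac{\omega_{n-1}}{2^*-\eps}\,|u_\eps(1)|^{2^*-\eps},
\end{equation*}
i.e., $\|u_\eps\|_{L^{2^*-\eps}(B)}^{2^*-\eps} = \tfrac{2\omega_{n-1}}{(n-2)\eps}|u_\eps(1)|^{2^*-\eps}$. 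The lemma thus reduces to showing $|u_\eps(1)|^{2^*-\eps}=O(\eps)$.

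To establish this, I would integrate the radial ODE $(r^{n-1}u_\eps')'=-|u_\eps|^{\frac{4}{n-2}-\eps}u_\eps\,r^{n-1}$ on $(\rho_{1,\eps},1)$ and use $u_\eps'(1)=0$; assuming without loss of generality $u_\eps(1)>0$ (so $u_\eps>0$ on this annulus), this yields
\begin{equation*}
|u_\eps'(\rho_{1,\eps})|\,\rho_{1,\eps}^{n-1} \;=\; \int_{\rho_{1,\eps}}^1 u_\eps(r)^{2^*-\eps-1}\,r^{n-1}\,dr.
\end{equation*}
A Taylor expansion of $u_\eps$ at $r=1$, using $u_\eps''(1)=-u_\eps(1)^{2^*-\eps-1}$, shows $u_\eps(r)\ge u_\eps(1)/\sqrt{2}$ on an interval of length at least $c\,u_\eps(1)^{-(2^*-\eps-2)/2}$ abutting $r=1$; hence the right-hand side above is bounded below by $c'\,u_\eps(1)^{(2^*-\eps)/2}$. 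On the other hand, Lemma~\ref{rescaled} with $k=1$ gives $|u_\eps'(\rho_{1,\eps})|\,\rho_{1,\eps}^{n-1}\le C\eps^{1/2}\,\rho_{1,\eps}^{(n-2)(2-\eps(n-2))/(4-\eps(n-2))}\le C\eps^{1/2}$, since the exponent of $\rho_{1,\eps}$ is positive for $\eps$ small and $\rho_{1,\eps}\le 1$. Chaining the two inequalities produces $u_\eps(1)^{(2^*-\eps)/2}\le C''\eps^{1/2}$, i.e., $|u_\eps(1)|^{2^*-\eps}\le C'''\eps$, as required.

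The main technical obstacle is ensuring the Taylor estimate is uniform in $\eps$: when $u_\eps(1)$ is small, the interval of validity of the quadratic approximation may exceed $(\rho_{1,\eps},1)$, in which case one uses instead that on $(\rho_{1,\eps},1)$ the derivative $u_\eps'$ is strictly decreasing (the radial ODE combined with $u_\eps,u_\eps'>0$ force $u_\eps''<0$ there), and this monotonicity provides an alternative lower bound on the integral. The exponent bookkeeping tying the rate from Lemma~\ref{rescaled} to the factor $\rho_{1,\eps}^{n-1}$ is a direct, if tedious, calculation.
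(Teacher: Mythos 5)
Your reduction in the first half is correct and even sharp: testing with $u_\eps$ gives $\|\nabla u_\eps\|_{L^2(B)}^2=\|u_\eps\|_{L^{2^*-\eps}(B)}^{2^*-\eps}$, and the Pohozaev identity with $\partial_\nu u_\eps=0$ does give $\frac{(n-2)\eps}{2}\int_B|u_\eps|^{2^*-\eps}=|\partial B|\,|u_\eps(1)|^{2^*-\eps}$, so the lemma is equivalent to $|u_\eps(1)|^{2^*-\eps}=O(\eps)$ (consistent with Theorem \ref{explicit:thm_Neumann}, which gives $|u_\eps(1)|\sim\eps^{\frac{n-2}{2n}}$). The gap is in the second half: the claimed lower bound $\int_{\rho_{1,\eps}}^1u_\eps^{2^*-1-\eps}r^{n-1}\,dr\ge c'\,u_\eps(1)^{(2^*-\eps)/2}$ needs an interval of length $\sim u_\eps(1)^{-(2^*-2-\eps)/2}$ inside $(\rho_{1,\eps},1)$, and in the regime that actually occurs $u_\eps(1)\to0$, so this length blows up and cannot fit in the unit interval; the estimate is then simply false. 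Quantitatively, by the theorem you are in the middle of proving, $\int_{\rho_{1,\eps}}^1u_\eps^{2^*-1-\eps}r^{n-1}\,dr=|u_\eps'(\rho_{1,\eps})|\rho_{1,\eps}^{n-1}\sim\eps^{\frac{n+2}{2n}}$ while $u_\eps(1)^{2^*/2}\sim\eps^{\frac12}$, so your inequality fails by a factor $\eps^{1/n}$. Your fallback (concavity of $u_\eps$ on $(\rho_{1,\eps},1)$) only yields $\int_{\rho_{1,\eps}}^1u_\eps^{2^*-1-\eps}r^{n-1}\,dr\gtrsim u_\eps(1)^{2^*-1-\eps}$ up to factors involving the unknown $\rho_{1,\eps}$; chaining with Lemma \ref{rescaled} then gives at best $u_\eps(1)^{2^*-1}\lesssim\eps^{1/2}$, i.e.\ $|u_\eps(1)|^{2^*-\eps}=O(\eps^{\frac{n}{n+2}})$, and your Pohozaev reduction turns this into $\|u_\eps\|_{L^{2^*-\eps}(B)}^{2^*-\eps}=O(\eps^{-\frac{2}{n+2}})$, which is not a uniform bound. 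Recovering the missing factor $\eps^{1/n}$ essentially requires the relation $|u_\eps(1)|^{\frac{4}{n-2}-\eps}\approx n(n-2)\rho_{1,\eps}^{n-2}$ (Lemma \ref{2eqs}), but in the paper that relation is proved through Lemmas \ref{bds} and \ref{prop:lemma}, which rest on Lemma \ref{Ebound} itself; using it here would be circular.

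For comparison, the paper's proof bypasses all of this: by uniqueness the Neumann solution is a rescaling of the Dirichlet solution $v_\eps$ with $m-1$ interior zeros, $u_\eps(x)=\widehat\delta_{1,\eps}^{\,\frac{2(n-2)}{4-\eps(n-2)}}v_\eps(\widehat\delta_{1,\eps}x)$ with $\widehat\delta_{1,\eps}$ the largest critical point of $v_\eps$; a change of variables shows $\int_B|u_\eps|^{2^*-\eps}\le\int_B|v_\eps|^{2^*-\eps}$, the latter being uniformly bounded by \cite[Proposition 3.1]{DIP17}, and the gradient bound then follows from the energy identity. If you want to salvage your route, you must replace the Taylor-scale lower bound by an argument giving $|u_\eps(1)|^{2^*-\eps}=O(\eps)$ without presupposing control of $\rho_{1,\eps}$, or import the $L^{2^*-\eps}$ bound for the Dirichlet problem as the paper does.
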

\begin{proof}
Let $v_\eps$ be a solution of \eqref{sceq} with Dirichlet boundary conditions \eqref{dbc} and $m-1$ interior zeros and let $1> \widehat\delta_{1,\eps}>\widehat\delta_{2,\eps}>\ldots >\widehat\delta_{m,\eps}=0$ be the decreasing sequence of all the critical points of $v_\eps$ in $[0,1]$.  Then $u_\eps(x)=\widehat\delta_{1,\eps}^{\frac{2(n-2)}{4-\eps(n-2)}}  v_\eps(x \widehat\delta_{1,\eps})$ for $x\in B$ and for all $\eps>0$ small, by uniqueness.

By \cite[Proposition 3.1]{DIP17} we know that $\int_B |v_\eps(y)|^{2^*-\eps} dy \to m S_n^\frac{n}{2}$, where $S_N$ is the best Sobolev constant. Then,  
\begin{align*}
 \int_B |u_\eps|^{2^*-\eps} 
& = \int_B \widehat\delta_{1,\eps}^{\frac{2(n-2)}{4-\eps(n-2)}(2^*-\eps)} |v_\eps(x \widehat\delta_{1,\eps})|^{2^*-\eps}\\
& = \int_{B_{\widehat\delta_{1,\eps}}} \widehat\delta_{1,\eps}^{\frac{\epsilon  (n-2)^2}{4-\epsilon  (n-2)}} |v_\eps(y)|^{2^*-\eps}
 \leq \int_B |v_\eps(y)|^{2^*-\eps}\leq m S_n^\frac{n}{2}+1
\end{align*}
for $\eps$ small enough. Finally, since $u_\eps$ is a solution of \eqref{sceq}, \eqref{nbc} we have that $\|\nabla u_\eps\|_{L^{2}(B)}^2=\|u_\eps\|^{2^*-\eps}_{L^{2^*-\eps}}(B)$ and the claim follows. 
\end{proof}

\begin{lemma}\label{bds}
 There is $M>1$ independent of $\eps$ such that
 \begin{align*}
  \text{$|u_\eps(1)|<M$\qquad and\qquad $\rho_{1,\eps}<1-M^{-1}$\qquad for all $\eps>0$ sufficiently small.}
 \end{align*}
 \end{lemma}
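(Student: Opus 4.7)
The plan is to prove the two uniform bounds separately. First I would bound $|u_\eps(1)|$ uniformly by combining the energy estimate from Lemma \ref{Ebound} with the classical trace inequality, exploiting the fact that radial functions take a constant value on $\partial B$. Then, once $|u_\eps(1)|$ is controlled, I would use the integral identity of Lemma \ref{intlem} together with a short expansion of the denominator to rule out $\rho_{1,\eps} \to 1$.

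For the bound on $|u_\eps(1)|$, the key observation is that Lemma \ref{Ebound} gives uniform $L^{2^*-\eps}(B)$ control on $u_\eps$, and since $2^*-\eps>2$ for every $\eps\in(0,\tfrac{4}{n-2})$, H\"older's inequality upgrades this to a uniform $L^2(B)$ bound. Together with the uniform bound on $\|\nabla u_\eps\|_{L^2(B)}$, this yields a uniform $H^1(B)$ bound. The standard trace inequality then controls $\|u_\eps\|_{L^2(\partial B)}$, and since $u_\eps|_{\partial B}\equiv u_\eps(1)$ by radiality, this directly gives $|u_\eps(1)|\leq M_0$ for some $M_0$ independent of $\eps$.

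For the lower bound on $1-\rho_{1,\eps}$, I would exploit the monotonicity of $u_\eps$ on $(\rho_{1,\eps},1)$: since $\rho_{1,\eps}$ is the largest interior zero and the adjacent critical point is $\delta_{1,\eps}=1$, the function $u_\eps$ is monotone on $(\rho_{1,\eps},1)$ with $u_\eps(\rho_{1,\eps})=0$ and $|u_\eps|\leq |u_\eps(1)|$ there. Hence $|u_\eps(s)/u_\eps(1)|\leq 1$ throughout the interval, so inserting this into the identity of Lemma \ref{intlem} yields
\[
|u_\eps(1)|^{\frac{4}{n-2}-\eps} \geq \frac{(n-2)\,\rho_{1,\eps}^{n-2}}{\int_{\rho_{1,\eps}}^1 s(s^{n-2}-\rho_{1,\eps}^{n-2})\,ds}.
\]
A short Taylor expansion of the denominator around $\rho_{1,\eps}=1$ shows it equals $\tfrac{n-2}{2}(1-\rho_{1,\eps})^2 + O((1-\rho_{1,\eps})^3)$, so the right-hand side blows up like $(1-\rho_{1,\eps})^{-2}$ if $\rho_{1,\eps}\to 1$. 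Since the left-hand side remains bounded by the first step, this forces $1-\rho_{1,\eps}\geq M^{-1}$ for some $M>1$ uniform in $\eps$, and enlarging $M$ if necessary we also get $|u_\eps(1)|<M$.

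The only mild technical obstacle is the uniformity in $\eps$ of the bound on $|u_\eps(1)|^{\frac{4}{n-2}-\eps}$ once $|u_\eps(1)|$ is bounded; this is immediate since for $\eps$ small the exponent lies in a fixed compact subset of $(0,\infty)$, so the power is bounded independently of $\eps$.
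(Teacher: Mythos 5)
Your proof is correct. The second half \textemdash ruling out $\rho_{1,\eps}\to 1$ via the identity of Lemma \ref{intlem}, after bounding the integrand by $s(s^{n-2}-\rho_{1,\eps}^{n-2})$ using the monotonicity of $u_\eps$ on $(\rho_{1,\eps},1)$ \textemdash is essentially the paper's argument; your explicit Taylor expansion $\tfrac{n-2}{2}(1-\rho_{1,\eps})^2+O((1-\rho_{1,\eps})^3)$ is a harmless refinement, since the paper only needs that the denominator tends to $0$ while the numerator stays away from $0$. Where you genuinely diverge is the uniform bound on $|u_\eps(1)|$: you upgrade the $L^{2^*-\eps}$ and gradient bounds of Lemma \ref{Ebound} to a uniform $H^1(B)$ bound via H\"older (legitimate, since $2^*-\eps>2$ on the whole range of $\eps$ and the H\"older constant involves only $|B|$ and a bounded exponent), and then invoke the trace inequality together with the fact that the radial function $u_\eps$ is constant on $\partial B$. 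The paper instead argues in an elementary, self-contained way: from the $L^{2^*-\eps}$ bound it selects a point $x_\eps\in(\tfrac14,\tfrac12)$ with $|u_\eps(x_\eps)|\le C$, and then controls $|u_\eps(x)-u_\eps(x_\eps)|$ for $x\in(x_\eps,1]$ by writing $u_\eps'(t)=u_\eps'(t)t^{\frac{n-1}{2}}t^{\frac{1-n}{2}}$ and applying Cauchy--Schwarz against the gradient bound. Your route is shorter but leans on the trace theorem and on radiality to convert an $L^2(\partial B)$ bound into a pointwise value at $r=1$; the paper's route avoids the trace theorem and in fact bounds $u_\eps$ uniformly on the whole outer region $(x_\eps,1]$, not just at the boundary. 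Both deliver the lemma.
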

\begin{proof}
 By Lemma \ref{Ebound}, there is $C_1>0$ independent of $\eps$ such that $\|u_\eps\|_{L^{2^*-\eps}(B)}+\|\nabla u_\eps\|_{L^{2}(B)}< C_1$ for all $\eps$ sufficiently small. In particular, the $L^{2^*-\eps}$ bound implies the existence of $\bar \eps,C_2>0$ such that, for every $\eps\in (0,\bar \eps)$, there is $x_\eps\in (\frac{1}{4},\frac{1}{2})$ such that $|u_\eps(x_\eps)|<C_2$. But then 
\begin{align*}
 |u_\eps(x)-u_\eps(x_\eps)|&=\left|\int_{x_\eps}^{x} u_\eps'(t)t^\frac{n-1}{2}t^\frac{1-n}{2}\ dt\right| \leq \left(\int_{0}^{1} |u_\eps'(t)|^2t^{n-1}\ dt\right)^\frac{1}{2} \left(\int_{\frac{1}{4}}^{1} t^{1-n}\ dt \right)^\frac{1}{2}\\
 			&= \frac{1}{|\partial B|^\frac{1}{2}} \| \nabla u_\eps\|_{L^2(B)}\left(\int_{\frac{1}{4}}^{1} t^{1-n}\ dt \right)^\frac{1}{2}\leq C_3
\end{align*}
for some $C_3>0$ independent of $\eps$ and for all $x\in (x_\eps,1]$.  In particular, $|u_\eps(1)|\leq C_4$ for some $C_4>0$ and for all $\eps$ sufficiently small.

Finally, assume by contradiction that $\rho_{1,\eps}\to 1$ (up to a subsequence). Then, using that $\max_{(\rho_{1,\eps},1)}|u_\eps|=|u_\eps(1)|$ we have
\[
\left| \int_{\rho_{1,\eps}}^1 \Big|\frac{u_\eps(s)}{u_\eps(1)}\Big|^{\frac{4}{n-2}-\eps}\frac{u_\eps(s)}{u_\eps(1)}s( s^{n-2} -  \rho_{1,\eps}^{n-2})\ ds \right| \leq \int_{\rho_{1,\eps}}^1 s(s^{n-2}-\rho_{1,\eps}^{n-2})\ ds \to 0
\] 
and from Lemma \ref{intlem} we get $|u_\eps(1)|\to \infty$, a contradiction. Therefore $\rho_{1,\eps}$ is bounded away from 1.
\end{proof}

\begin{lemma}\label{prop:lemma}
 Let $m\geq 2$. If Theorem \ref{explicit:thm_Dirichlet} (Dirichlet case) holds for radial solutions with $m-2$ interior zeros, then $\rho_{1,\eps}\to 0$ and $|\frac{u_\varepsilon}{u_\varepsilon(1)}|\to 1$ in $[a,1]$ uniformly as $\varepsilon\to 0$ for any $a>0$.
\end{lemma}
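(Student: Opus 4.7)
The plan is to prove the lemma in three steps: first that $|u_\eps(1)|\to 0$; second that consequently $\rho_{1,\eps}\to 0$; and third that the uniform convergence $|u_\eps/u_\eps(1)|\to 1$ on $[a,1]$ follows from an ODE integration. The argument relies on the identity \eqref{1ibyp}, on the decay rate for $u_\eps'(\rho_{1,\eps})$ supplied by Lemma \ref{rescaled}, on the a priori bounds of Lemma \ref{bds}, and on the integral identity of Lemma \ref{intlem}.

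For Step 1, I assume without loss of generality that $u_\eps(1)>0$, so $u_\eps>0$ on $(\rho_{1,\eps},1]$ by \eqref{unot0} and the absence of zeros of $u_\eps$ there. Evaluating \eqref{1ibyp} at $r=\rho_{1,\eps}$ gives
\[
u_\eps'(\rho_{1,\eps})\rho_{1,\eps}^{n-1}=\int_{\rho_{1,\eps}}^1 u_\eps(s)^{\frac{n+2}{n-2}-\eps}s^{n-1}\,ds.
\]
Applying Lemma \ref{rescaled} with $k=1$ in \eqref{R}, the left-hand side has order
\[
\eps^{1/2}\,\rho_{1,\eps}^{\,n-2-\frac{2(n-2)}{4-\eps(n-2)}}\leq C\eps^{1/2},
\]
since the exponent on $\rho_{1,\eps}$ tends to $(n-2)/2>0$ as $\eps\to 0$ and $\rho_{1,\eps}\leq 1$. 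Hence the integral tends to $0$. If instead $|u_\eps(1)|\geq c>0$ along a subsequence, the bound $|u_\eps|\leq M$ on $[\rho_{1,\eps},1]$ (Lemma \ref{bds}) plugged into \eqref{1ibyp} yields a uniform Lipschitz bound $|u_\eps'|\leq C$ on $[1/2,1]$, which in turn forces $u_\eps(r)\geq c/2$ on some interval $[1-\eta,1]$ with $\eta>0$ independent of $\eps$. This gives a positive lower bound on the integral, contradicting its vanishing. Hence $u_\eps(1)\to 0$.

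Step 2 is then immediate: Lemma \ref{intlem} reads $|u_\eps(1)|^{\frac{4}{n-2}-\eps}=(n-2)\rho_{1,\eps}^{n-2}/I_\eps$, and the bound $|u_\eps/u_\eps(1)|\leq 1$ on $(\rho_{1,\eps},1]$ gives $0<I_\eps\leq \int_{\rho_{1,\eps}}^1 s(s^{n-2}-\rho_{1,\eps}^{n-2})\,ds\leq 1/n$. Since the left-hand side tends to $0$ by Step 1 and $I_\eps$ is bounded, $\rho_{1,\eps}^{n-2}\to 0$, i.e., $\rho_{1,\eps}\to 0$.

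For Step 3, fix $a>0$. By Step 2, $\rho_{1,\eps}<a$ for all sufficiently small $\eps$, so $u_\eps$ has constant sign on $[a,1]$. The identity \eqref{1ibyp} then yields
\[
|u_\eps'(r)|\leq a^{1-n}|u_\eps(1)|^{\frac{n+2}{n-2}-\eps}\cdot\tfrac{1-r^n}{n}\leq C_a|u_\eps(1)|^{\frac{n+2}{n-2}-\eps}\quad\text{for }r\in[a,1],
\]
and integrating from $r$ to $1$ and dividing by $|u_\eps(1)|$ produces
\[
\left|\frac{u_\eps(r)}{u_\eps(1)}-1\right|\leq C_a|u_\eps(1)|^{\frac{4}{n-2}-\eps}\longrightarrow 0
\]
uniformly on $[a,1]$ by Step 1. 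The main technical obstacle is Step 1: one must translate the explicit rate on $u_\eps'(\rho_{1,\eps})$ from Lemma \ref{rescaled} into vanishing control on $u_\eps(1)$ via the integral identity \eqref{1ibyp}, and the crucial point is that the exponent $n-2-\frac{2(n-2)}{4-\eps(n-2)}$ remains nonnegative for all small $\eps$, making the bound $C\eps^{1/2}$ uniform regardless of the a priori unknown behavior of $\rho_{1,\eps}$.
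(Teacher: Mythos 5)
Your argument is correct in substance but takes a genuinely different route from the paper, and in a different order. The paper normalizes $w_\eps=u_\eps/u_\eps(1)$, extracts a subsequence, obtains local $C^{2,\alpha}$ bounds and a limit ODE \eqref{eq:w_0}, and rules out $\rho^*=\lim\rho_{1,\eps}>0$ by a case analysis on $M=\lim u_\eps(1)$ (using \eqref{R} with $k=1$ only to force $u_\eps'(\rho_{1,\eps})\to 0$, then either the explicit solutions $w_0=Ar^{2-n}+B$ or uniqueness of the Cauchy problem); only afterwards does it deduce $M=0$ and $w_0\equiv 1$ from Lemma \ref{intlem}, the uniform statement coming from the compactness. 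You instead exploit the inductive rate quantitatively and upfront: \eqref{R} with $k=1$ combined with \eqref{1ibyp} at $r=\rho_{1,\eps}$ gives $\int_{\rho_{1,\eps}}^1|u_\eps|^{p_\eps}s^{n-1}\,ds=O(\eps^{1/2})$ (your exponent check $n-2-\tfrac{2(n-2)}{4-\eps(n-2)}\geq 0$ for small $\eps$ is right), which forces $u_\eps(1)\to 0$; then $\rho_{1,\eps}\to 0$ drops out of Lemma \ref{intlem} because $0<I_\eps\leq 1/n$; and the uniform convergence on $[a,1]$ follows from a direct gradient bound, with no Arzel\`{a}--Ascoli, no limit equation, and no uniqueness argument. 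This is more elementary and arguably cleaner; the paper's compactness route delivers in addition local $C^1$ convergence of $w_\eps$, but that extra information is not needed for the lemma as stated. One small repair is needed in your Step 1: the claimed bound $|u_\eps'|\leq C$ on $[1/2,1]$ is not justified a priori, since Lemma \ref{bds} only gives $\rho_{1,\eps}<1-M^{-1}$, and if $\rho_{1,\eps}>1/2$ part of $[1/2,1]$ lies in the previous nodal region where $|u_\eps|$ is not controlled by $M$ (indeed $|u_\eps(\delta_{2,\eps})|$ blows up). Simply run the same estimate on $[1-M^{-1},1]\subset(\rho_{1,\eps},1]$ (or on $[\max\{\rho_{1,\eps},1/2\},1]$), where $|u_\eps|\leq|u_\eps(1)|\leq M$ and $r^{1-n}$ is bounded, and choose $\eta<\min\{M^{-1},c/(2C)\}$; the contradiction with the vanishing of the integral then goes through unchanged.
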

\begin{proof}
We show that every subsequence of $u_\eps$ has a subsequence for which the claim holds.  Assume without loss of generality that $u_\eps(1)>0$ and let $w_\eps(x):=\frac{u_\eps(x)}{u_\eps(1)}$.  Then $w_\eps$ solves
\begin{align*}
 -(r^{n-1}w_\eps')' =u_\eps(1)^{\frac{4}{n-2}-\eps} w_\eps^{\frac{n+2}{n-2}-\eps} r^{n-1}\quad  \text{ in }(\rho_{1,\eps},1),\quad w_\eps(\rho_{1,\eps})=w_\eps'(1)=0,\quad w_\eps(1)=1.
\end{align*}

Passing to a subsequence, the limit $\rho^*:=\lim_{\eps\to 0}\rho_{1,\eps} \geq 0$ exists. Let us prove that $\rho^*=0$.
Observe that $w_\eps$ is positive and monotone increasing in $(\rho_{1,\eps},1)$, thus $0\leq w_\eps\leq 1$ in $[\rho_{1,\eps},1]$ (by Lemma \ref{bds} we know that $\rho_{1,\eps}$ is bounded away from 1). Integrating the equation we obtain
\begin{align}\label{eq:weps}
w_\eps'(r) r^{n-1}= u_\eps(1)^{\frac{4}{n-2}-\eps} \int_r^1 w_\eps(s)^{\frac{n+2}{n-2}-\eps} s^{n-1}\, ds,\qquad r\in (\rho_{1,\eps},1),
\end{align}
and so $w_\eps'$ is uniformly bounded in $[a,1]$ for every $a>\rho^*$ (and $\eps$ sufficiently small). Since $w_\eps''(r)=-\frac{n-1}{r}w_\eps'(r) -u_\eps(1)^{\frac{4}{n-2}-\eps}w_\eps(r)^{\frac{n+2}{n-2}-\eps} r^{n-1}$, then also $w_\eps''$ is uniformly bounded in $[a,1]$ for every $a>\rho^*$.
By Rellich-Kondrachov compactness theorem, we may define $w_0(x):=\lim_{\eps\to 0}w_{\eps}(x)$ for $x\in(\rho^*,1)$, and $w_\eps\to w_0$ strongly in $C^{1,\alpha}([a,1])$ for every $a>\rho^*$.  Using again the equation, the convergence is actually in $C^{2,\alpha}([a,1])$. Then 
\begin{align}\label{eq:w_0}
-(r^{n-1}w_0')' =M^{\frac{4}{n-2}}w_0^{\frac{n+2}{n-2}} r^{n-1}\quad  \text{ in }(\rho^*,1),\qquad w'_0(1)=0\quad \text{ and }\quad w_0(1)=1,
\end{align}
where $M:=\lim_{\eps\to 0}u_\eps(1)\ge0$ (up to a subsequence), and in particular $w_0\not\equiv 0$. If $\rho^*>0$, then, by \eqref{eq:weps} and dominated convergence,
\begin{align*}
0\leq w_0(\rho^*+\eta)&=\lim_{\eps\to 0}\int^{\rho^*+\eta}_{\rho_{1,\eps}} r^{1-n} u_\eps(1)^{\frac{4}{n-2}-\eps} \int_r^1 w_\eps(s)^{\frac{n+2}{n-2}-\eps} s^{n-1}\, ds\, dr\\
&\leq \int^{\rho^*+\eta}_{\rho^*} r^{1-n} M^{\frac{4}{n-2}} \, dr
\leq (\rho^*)^{1-n}M^{\frac{4}{n-2}}\eta
\end{align*}
for all $\eta>0$ small, and therefore $w_0(\rho^*)=\lim_{\eta\to0}w_0(\rho^*+\eta)=0$. Then, by \eqref{R} for $k=1$, we have
\begin{align*}
 Z(1,m-1)=\lim_{\eps\to 0}\rho_{1,\eps}^{\frac{2(n-2)}{4-\eps(n-2)}+1}|u_\eps'(\rho_{1,\eps})|\ (\kappa_n\epsilon)^{-\frac{1}{2}}
 =\kappa_n^{-\frac{1}{2}}(\rho^*)^{\frac{n-2}{2}+1}\lim_{\eps\to 0}|u_\eps'(\rho_{1,\eps})|\ \epsilon^{-\frac{1}{2}}.
 \end{align*}
Thus, $u_\eps'(\rho_{1,\eps})\to 0$ as $\eps\to 0$. If $M=0$ then, 
by \eqref{eq:w_0}, we have that $w_0(r)=\frac A{r^{n-2}}+B$ and since $w_0'(1)=0$ and $w_0(1)=1$ we get $w_0(r)=1$ for $r\in(0,1)$, which contradicts $w_0(\rho^*)=0$. If $M>0$ then since $w_\eps(x):=\frac{u_\eps(x)}{u_\eps(1)}$ we obtain $w'_0(\rho^*)=0$, which again violates the uniqueness of the Cauchy problem.  Therefore $\rho_{1,\eps}\to 0$ as $\eps\to 0$.

Since $w_0\not \equiv 0$, $0\leq w_\eps\leq 1$ in $[\rho_{1,\eps},1]$, and $\rho_{1,\eps}\to 0$, we have by dominated convergence that
\[
\lim_{\eps\to 0}\int_{\rho_{1,\eps}}^1 w_\eps^{\frac{n+2}{n-2}-\eps} s(s^{n-2}-\rho_{1,\eps}^{n-2})\ dx= \int_0^1 w_0^{\frac{n+2}{n-2}-\eps} s^{n-1}\ ds >0,
\]
but then, by Lemma \ref{intlem} and the fact that $\rho_{1,\eps}\to 0$, we conclude that $M=\lim_{\eps\to 0} u_\eps(1)=0$ 
and, arguing as before using \eqref{eq:w_0}, we obtain that $w_0(r)=1$ for $r\in(0,1]$.
\end{proof}

\begin{lemma}\label{2eqs}
Recall that $\delta_{1,\eps}=1$.  It holds that
\begin{equation}\label{eqs}
\begin{aligned}
&\lim_{\eps\to 0}|u_\eps(\delta_{1,\eps})|^{\frac{4}{n-2}-\varepsilon} \rho_{1,\eps}^{2-n} = n(n-2)\quad \text{ and }\quad 
&\lim_{\eps\to 0}|u_\eps'(\rho_{1,\eps})|\rho_{1,\eps}^{n-1} |u_\eps(\delta_{1,\eps})|^{\varepsilon-\frac{n+2}{n-2}}=n^{-1}.
\end{aligned}
\end{equation}
\end{lemma}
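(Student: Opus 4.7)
The plan is to derive both limits from the two integral identities already established, namely Lemma \ref{intlem} for the first and the radial integration-by-parts formula \eqref{1ibyp} for the second, by passing to the limit under the integral using the normalization result of Lemma \ref{prop:lemma}.

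Without loss of generality assume $u_\eps(1)>0$ (recall \eqref{unot0} and $\delta_{1,\eps}=1$). Since $\rho_{1,\eps}$ is the largest interior zero and $\delta_{1,\eps}=1$ is the adjacent critical point, $u_\eps$ is monotone increasing on $[\rho_{1,\eps},1]$. Consequently the normalized function $w_\eps:=u_\eps/u_\eps(1)$ satisfies $0\leq w_\eps\leq 1$ on $[\rho_{1,\eps},1]$, $w_\eps(1)=1$, and by Lemma \ref{prop:lemma}, $\rho_{1,\eps}\to 0$ as $\eps\to 0$ while $w_\eps\to 1$ uniformly on $[a,1]$ for every $a>0$; in particular $w_\eps\to 1$ pointwise on $(0,1]$.

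For the first identity, Lemma \ref{intlem} rewrites as
\begin{equation*}
|u_\eps(1)|^{\frac{4}{n-2}-\eps}\rho_{1,\eps}^{2-n}=\frac{n-2}{\displaystyle\int_{\rho_{1,\eps}}^{1} w_\eps(s)^{\frac{n+2}{n-2}-\eps}\, s\bigl(s^{n-2}-\rho_{1,\eps}^{n-2}\bigr)\,ds}.
\end{equation*}
The integrand, extended by zero to $(0,\rho_{1,\eps})$, is dominated by $s\cdot s^{n-2}=s^{n-1}\in L^1(0,1)$ and converges pointwise to $s^{n-1}$ on $(0,1]$ (using $\rho_{1,\eps}\to 0$ and $w_\eps\to 1$). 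Dominated convergence then gives that the integral tends to $\int_0^1 s^{n-1}\,ds=1/n$, producing the limit $n(n-2)$.

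For the second identity, specialize \eqref{1ibyp} at $r=\rho_{1,\eps}$; since $u_\eps>0$ on $(\rho_{1,\eps},1]$, the integrand is positive, so $u_\eps'(\rho_{1,\eps})>0$ and
\begin{equation*}
u_\eps'(\rho_{1,\eps})\,\rho_{1,\eps}^{n-1}=\int_{\rho_{1,\eps}}^{1}u_\eps(s)^{\frac{n+2}{n-2}-\eps}\,s^{n-1}\,ds.
\end{equation*}
Dividing both sides by $u_\eps(1)^{\frac{n+2}{n-2}-\eps}$ and using $|u_\eps(1)|=u_\eps(1)$ yields
\begin{equation*}
|u_\eps'(\rho_{1,\eps})|\,\rho_{1,\eps}^{n-1}\,|u_\eps(1)|^{\eps-\frac{n+2}{n-2}}=\int_{\rho_{1,\eps}}^{1}w_\eps(s)^{\frac{n+2}{n-2}-\eps}\,s^{n-1}\,ds.
\end{equation*}
Again the integrand is dominated by $s^{n-1}\in L^1(0,1)$ and converges pointwise to $s^{n-1}$ on $(0,1]$, so dominated convergence gives the right-hand side $\to 1/n$, which is the desired second limit.

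There is no substantive obstacle here: the prior lemmas already supply every ingredient (monotonicity on $(\rho_{1,\eps},1)$, the bound $w_\eps\leq 1$, the vanishing of $\rho_{1,\eps}$, and the normalization $w_\eps\to 1$), so the entire argument reduces to invoking Lebesgue dominated convergence with the uniform bound $s^{n-1}$. The only point warranting care is the sign handling in passing from \eqref{1ibyp} to the formula with absolute values, which is resolved by the monotonicity of $u_\eps$ on $[\rho_{1,\eps},1]$.
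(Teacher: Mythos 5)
Your proposal is correct and follows essentially the same route as the paper: the first limit from Lemma \ref{intlem} combined with the normalization and vanishing of $\rho_{1,\eps}$ from Lemma \ref{prop:lemma} plus dominated convergence (dominating function $s^{n-1}$), and the second by evaluating \eqref{1ibyp} at $r=\rho_{1,\eps}$ and normalizing in the same way. Your treatment of the sign via monotonicity of $u_\eps$ on $[\rho_{1,\eps},1]$ matches the paper's implicit use of the same fact.
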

\begin{proof} Without loss of generality assume that $u_\eps(\delta_{1,\eps})=u_\eps(1)<0$ (see \eqref{unot0}). The first identity in \eqref{eqs} follows from Lemmas \ref{intlem}, \ref{prop:lemma} and Lebesgue's dominated convergence, since
\begin{align}\label{co}
&\lim_{\varepsilon\to 0} \int_{\rho_{1,\eps}}^1 \frac{|u_\varepsilon(s)|^{\frac{n+2}{n-2}-\eps}}{|u_\varepsilon(1)|^{\frac{n+2}{n-2}-\eps}}( s^{n-1} -  s\rho_{1,\eps}^{n-2})\ ds = \int_0^1 s^{n-1}= \frac{1}{n}.
\end{align}
The second identity follows similarly evaluating \eqref{1ibyp} at $\rho_{1,\eps}$ and using a similar normalization.
\end{proof}

\begin{proof}[Proof of Proposition \ref{prop1}] In the following, as usual, $o(1)$ is a function of $\eps$ such that $\lim_{\eps\to 0}o(1)=0$. Using \eqref{eqs} and \eqref{R} for $k=1$, we have
\begin{equation}\label{simeq}
\begin{aligned}
|u_\eps'(\rho_{1,\eps})|\rho_{1,\eps}^{n-1} |u(\delta_{1,\eps})|^{\varepsilon-\frac{n+2}{n-2}} &= n^{-1}+o(1),\\
|u_\eps(\delta_{1,\eps})|^{\frac{4}{n-2}-\varepsilon} \rho_{1,\eps}^{2-n} &= n(n-2)+o(1),\\
\rho_{1,\eps}^{\frac{2(n-2)}{4-\eps(n-2)}+1}|u_\eps'(\rho_{1,\eps})|\ (\kappa_n\epsilon)^{-\frac{1}{2}} &= Z(1,m-1)+o(1).
\end{aligned}
\end{equation}
For each $\eps$ these three equations establish the values of the unknowns $\rho_1$, $|u'(\rho_{1})|$, and $|u(\delta_1)|$, which we can determine by direct substitution. Namely, from the first two equations we obtain
\begin{align}\label{simeq2}
 |u_\eps'(\rho_{1,\eps})|&= \rho_{1,\eps}^{1-n} |u(\delta_{1,\eps})|^{\frac{n+2}{n-2}-\eps}(n^{-1}+o(1)),\\
 |u_\eps(\delta_{1,\eps})|&=\left((n(n-2)+o(1)) \rho_{1,\eps}^{n-2}\right)^{\frac{n-2}{4-\eps( n-2)}},
\end{align}
which we substitute in the last equation in \eqref{simeq} to have
\begin{align}
\label{m0}\begin{aligned}
 \rho_{1,\eps}^{\frac{2 (n-2)}{4-\eps (n-2)}-n+2} \left(\left((n(n-2)+o(1))\rho_{1,\eps}^{n-2}\right)^{\frac{n-2}{4-\eps( n-2)}}\right)^{\frac{n+2}{n-2}-\eps}&(n^{-1}+o(1))\\
 &=\sqrt{\kappa_n\epsilon } (Z(1,m-1)+o(1)).
\end{aligned}\end{align}
A direct computation leads to
\[
\rho_{1,\eps}=(\kappa_n \eps)^\frac{4-\eps(n-2)}{2n(n-2)} (n(n-2)+o(1))^{-\frac{n+2-\eps(n-2)}{n(n-2)}} [nZ(1,m-1)+o(1)]^\frac{4-\eps(n-2)}{n(n-2)}.
\]
Since 
\begin{align}\label{sim}
 \lim_{\eps\to 0}\eps^\frac{a+b\eps}{c+d\eps} \eps^{-\frac{a}{c}}=1\quad \text{ for all $a,b,c,d\in \R$ with $c\neq 0$},
\end{align}
then
\begin{align}\label{rho1}
\lim_{\eps\to 0}\rho_{1,\eps} (\kappa_n\epsilon)^{-\frac{2}{n(n-2)}} = (n-2)^{-\frac{n+2}{n(n-2)}} n^{-\frac{1}{n}} Z(1,m-1)^{\frac{4}{n(n-2)}}=(m-1)^{\frac{2}{n(n-2)}}=\widetilde z(1,m).
\end{align}
Now, using the exact profile of $\rho_{1,\eps}$ and \eqref{sim},
we can determine  $|u_\eps'(\rho_{1,\eps})|$ and $|u_\eps(\delta_{1,\eps})|$, that is,
\begin{align*}
\lim_{\eps\to 0}(\kappa_n\epsilon)^{\frac{2-n}{2n}} |u_\eps(\delta_{1,\eps})|
&=((n-2) n)^{\frac{n-2}{4}} \widetilde z(1,m)^{\frac{1}{4} (n-2)^2}\\
&= (n(n-2))^\frac{n-2}{4}(m-1)^\frac{n-2}{2n}= \widetilde D(1,m),\\
\lim_{\eps\to 0}(\kappa_n\epsilon)^{\frac{4-n}{2(n-2)}} |u_\eps'(\rho_{1,\eps})|
&=(n-2)^{\frac{n+2}{4}} n^{\frac{n-2}{4}} \widetilde z(1,m)^{\frac{n(n-4)}{4} }\\
&=(n-2)^\frac{n+2}{4} n^\frac{n-2}{4}(m-1)^\frac{n-4}{2(n-2)}
=\widetilde Z(1,m).
\end{align*}
We now use the profile $\rho_{1,\eps}$, \eqref{N21}--\eqref{R1}, and the fact that $\Gamma(k)=(k-1)!$ for $k\in \N$ to determine the other unknowns, namely, 
\begin{align*}
&\lim_{\eps\to 0}|u_\eps(\delta_{k,\eps})|(\kappa_n\epsilon)^{\frac{2kn-3n+2}{2n}}
 =D(k-1,m-1) \widetilde z(1,m)^{\frac{2-n}{2}} 
 =\widetilde D(k,m)\ \ \text{ for }k\in\{2,\ldots,m\},\\
 &\lim_{\eps\to 0} \delta_{k,\eps} (\kappa_n\epsilon)^{-\frac{2(k-1)}{n-2}}
=\widetilde z(1,m) d(k-1,m-1)
=\widetilde d(k,m)\ \ \text{ for }k\in\{2,\ldots,m-1\}\text{  if $m\geq 3$},\\
& \lim_{\eps\to 0}|u_\eps'(\rho_{k,\eps})|(\kappa_n\epsilon)^{\frac{2kn-3n+4}{2(n-2)}}
 =Z(k,m-1) \widetilde z(1,m)^{-\frac{n}{2}}
 = \widetilde Z(k,m)\ \ \text{ for }k\in\{1,\ldots,m-1\},\\
& \lim_{\eps\to 0}\rho_{k,\eps} (\kappa_n\epsilon)^{-\frac{2nk-2n+2 }{(n-2) n}}
 =\widetilde z(1,m) z(k,m-1)
 = \widetilde z(k,m)\ \ \text{ for }k\in\{2,\ldots,m-1\} \text{  if $m\geq 3$}.
\end{align*}
This ends the proof. 
\end{proof}

\section{The radial Dirichlet problem}\label{D:sec}

In this section we prove the following result.  

\begin{prop}\label{prop2}
 Let $m\geq 2$. If Theorem \ref{explicit:thm_Neumann} (Neumann case) holds for radial solutions with $m-1$ interior zeros, then Theorem \ref{explicit:thm_Dirichlet} (Dirichlet case) holds for radial solutions with $m-1$ interior zeros.
\end{prop}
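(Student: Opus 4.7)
The strategy mirrors that of Proposition \ref{prop1}, with the roles of the ``interior'' and ``outer'' analyses exchanged: the Neumann hypothesis is used to control $u_\eps$ on $[0,\delta_{1,\eps}]$, and the system is then closed on the outer annulus $[\delta_{1,\eps},1]$ by means of a radial pointwise Pohozaev identity and a blow-up at the outermost peak. Concretely, given a radial Dirichlet solution $u_\eps$ of \eqref{sceq}, \eqref{dbc} with $m-1$ interior zeros, the first step is to set
\begin{equation*}
\tilde u_\eps(r) := \delta_{1,\eps}^{\frac{2(n-2)}{4-\eps(n-2)}}\,u_\eps(r\,\delta_{1,\eps}),\qquad r\in[0,1].
\end{equation*}
Since $u_\eps'(\delta_{1,\eps})=0$, the function $\tilde u_\eps$ is a radial solution of \eqref{sceq}, \eqref{nbc} with exactly $m-1$ interior zeros, whose critical points and zeros are $\tilde\delta_k=\delta_{k,\eps}/\delta_{1,\eps}$ and $\tilde\rho_k=\rho_{k+1,\eps}/\delta_{1,\eps}$, respectively. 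Applying Theorem \ref{explicit:thm_Neumann} to $\tilde u_\eps$ and translating back yields $4m-4$ asymptotic identities expressing $|u_\eps(\delta_{k,\eps})|$ ($k=1,\ldots,m$), $\delta_{k,\eps}$ ($k\geq 2$), $|u_\eps'(\rho_{k,\eps})|$ ($k\geq 2$) and $\rho_{k,\eps}$ ($k\geq 2$) in terms of $\eps$ and the single remaining parameter $\delta_{1,\eps}$. Only three quantities remain undetermined, namely $\delta_{1,\eps}$, $|u_\eps(\delta_{1,\eps})|$ and $|u_\eps'(\rho_{1,\eps})|=|u_\eps'(1)|$; the $k=1$ identity above provides one relation among them.

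To pin down these three unknowns I would extract two further equations from the outer annulus $(\delta_{1,\eps},1)$, where $u_\eps$ has constant sign and joins its extremum at $\delta_{1,\eps}$ to the boundary zero at $1$. The first equation comes from the radial pointwise Pohozaev identity integrated between $\delta_{1,\eps}$ and $1$: using $u_\eps'(\delta_{1,\eps})=u_\eps(1)=0$, the boundary terms collapse and one is left at leading order with a clean algebraic relation among $|u_\eps'(1)|^2$, $|u_\eps(\delta_{1,\eps})|^{2^*-\eps}$ and $\delta_{1,\eps}^n$. The second comes from a blow-up at the outermost peak: rescaling $u_\eps$ by the natural concentration scale $|u_\eps(\delta_{1,\eps})|^{-(p-1)/2}$ with $p=\frac{n+2}{n-2}-\eps$, and normalizing by $|u_\eps(\delta_{1,\eps})|$, produces a sequence $U_\eps$ with $|U_\eps(0)|=1$ and $U_\eps'(0)=0$ on expanding domains, solving a rescaled subcritical equation. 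The sharp energy bound $\int_B |u_\eps|^{2^*-\eps}\to m\,S_n^{n/2}$ of \cite[Proposition 3.1]{DIP17} ensures that no mass is lost in the blow-up and that exactly one bubble is pinned to the annulus $(\delta_{1,\eps},1)$, so that $U_\eps\to w_{0,1}$ in $C^1_{\mathrm{loc}}$, with $w_{0,1}$ the standard bubble of \eqref{bubble}. Reading off $u_\eps'(1)$ through this convergence yields the second extra equation.

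Combining the three equations then produces a closed $3\times 3$ nonlinear system for $\delta_{1,\eps}$, $|u_\eps(\delta_{1,\eps})|$ and $|u_\eps'(1)|$, solvable by direct substitution exactly as in \eqref{simeq}--\eqref{rho1} of the Neumann proof. The resulting leading-order rates and constants match $d(1,m)$, $D(1,m)$, $Z(1,m)$ after $\Gamma$-function simplifications such as $\widetilde D(1,m)\,d(1,m)^{(n-2)/2}=D(1,m)$, and all remaining limits for $k\geq 2$ follow at once from the first-step translation. The main obstacle is the blow-up: one must verify that the rescaling at $\delta_{1,\eps}$ truly captures a complete bubble lying inside the open annulus $(\delta_{1,\eps},1)$ without interacting with $\partial B$, and that the outermost peak is the only bubble in that region. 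This is exactly what the sharp energy accounting of \cite{DIP17} guarantees, and it makes the blow-up, rather than the algebra of the final step, the technical heart of the argument.
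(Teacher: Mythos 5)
Your proposal follows essentially the same route as the paper: the rescaling $\delta_{1,\eps}^{\frac{2(n-2)}{4-\eps(n-2)}}u_\eps(\delta_{1,\eps}\,\cdot)$ to a Neumann solution is Lemma \ref{Dir:res}, the Pohozaev identity integrated over $(\delta_{1,\eps},1)$ is Lemma \ref{2nrD:lemma2}, the blow-up computation of $|u_\eps'(1)|$ is Lemma \ref{L1}, and the resulting $3\times 3$ system in $\delta_{1,\eps}$, $|u_\eps(\delta_{1,\eps})|$, $|u_\eps'(1)|$ is solved by direct substitution exactly as you describe, after which the $k\geq 2$ rates follow from the rescaling identities. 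Two small caveats: the limits of the annulus integrals $\int_{\delta_{1,\eps}}^1|u_\eps|^{p_\eps}r^{n-1}\,dr$ and $\int_{\delta_{1,\eps}}^1|u_\eps|^{p_\eps+1}r^{n-1}\,dr$ are justified by the pointwise upper bound of \cite{DIP17} (Proposition 3.6 and Corollary 3.12 there, packaged as Theorem \ref{dip:thm} here), which supplies the integrable dominating function on the expanding domains — the energy convergence of \cite[Proposition 3.1]{DIP17} plus local $C^1$ convergence alone does not control the subcritical-power tail; and your sample $\Gamma$-identity should read $D(1,m)=\widetilde D(1,m)\,d(1,m)^{1-\frac{n}{2}}$ rather than $\widetilde D(1,m)\,d(1,m)^{\frac{n-2}{2}}=D(1,m)$.
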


This proposition is the final ingredient in the proof of Theorems \ref{explicit:thm_Dirichlet} and \ref{explicit:thm_Neumann}, which can be found at the end of this section.

Let $n\geq 3$, $m\geq 2$, $\eps\in (0,\frac{4}{n-2})$, and let $u_\eps$ be a solution of \eqref{sceq}, \eqref{dbc} with $m-1$ interior zeros. Let $1> \delta_{1,\eps}>\delta_{2,\eps}>\ldots >\delta_{m,\eps}=0$ be the decreasing sequence of all the critical points of $u_\eps$ in $[0,1]$ and $1= \rho_{1,\eps}>\ldots>\rho_{m,\eps}>0$ the decreasing sequence of all the zeros of $u_\eps$ in $[0,1]$.  The constants $d$, $D$, $z$, $Z$, $\widetilde d$, $\widetilde D$, $\widetilde z$, and $\widetilde Z$ are explicit constants given by Theorems \ref{explicit:thm_Dirichlet} and \ref{explicit:thm_Neumann}.  The constant $\kappa_n$ is given in \eqref{Cchi}.

\medskip

\begin{lemma}\label{Dir:res}
 Let $m\geq 2$. If Theorem \ref{explicit:thm_Neumann} (Neumann case) holds for radial solutions with $m-1$ interior zeros, then 
 \begin{align}
 \lim_{\eps\to 0}\delta_{1,\eps}^{\frac{2(n-2)}{4-\eps(n-2)}}|u_\eps(\delta_{k,\eps})|\ 
(\kappa_n\epsilon)^{\frac{2kn-3n+2}{2n}}
&=\widetilde D(k,m)&&  \text{for $k\in\{1,\ldots,m\}$},\label{A0}\\
\lim_{\eps\to 0}\delta_{1,\eps}^{-1}\delta_{k,\eps}\ 
(\kappa_n\epsilon)^{-\frac{2 (k-1)}{n-2}}
&=\widetilde d(k,m)&&\text{for $k\in\{2,\ldots,m-1\}$  if  $m\geq 3$},\label{A01}\\
 \lim_{\eps\to 0}\delta_{1,\eps}^{\frac{2(n-2)}{4-\eps(n-2)}+1}|u_\eps'(\rho_{k,\eps})|\
(\kappa_n\epsilon)^{\frac{2(k-1)n-3n+4}{2 (n-2)}}
&=\widetilde Z(k-1,m)&&\text{for $k\in\{2,\ldots,m\}$},\label{A02}\\ 
\lim_{\eps\to 0} \delta_{1,\eps}^{-1}\rho_{k,\eps}\ 
(\kappa_n\epsilon)^{\frac{-2(k-1)n+2n-2}{n(n-2)}}
&=\widetilde z(k-1,m)&&\text{for $k\in\{2,\ldots,m\}$}\label{A03}. 
 \end{align}
\end{lemma}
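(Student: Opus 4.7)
The plan is to exploit the fact that a Dirichlet solution, when restricted to the ball of radius equal to its largest critical point, automatically satisfies Neumann boundary conditions; after rescaling back to the unit ball this produces a Neumann solution to which the hypothesis applies. Concretely, set
\[
\alpha_\eps := \frac{2(n-2)}{4-\eps(n-2)} = \frac{2}{p_\eps - 1},\qquad p_\eps := \frac{n+2}{n-2}-\eps,
\]
and define
\[
w_\eps(x) := \delta_{1,\eps}^{\alpha_\eps}\, u_\eps(\delta_{1,\eps}\, x),\qquad x\in B.
\]
The first step is to verify that $w_\eps$ is a (radial) Neumann solution of \eqref{sceq}, \eqref{nbc} on $B$ with exactly $m-1$ interior zeros. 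Scale invariance of the equation with the chosen $\alpha_\eps$ yields $-\Delta w_\eps = |w_\eps|^{p_\eps-1}w_\eps$ in $B$, while $w_\eps'(1) = \delta_{1,\eps}^{\alpha_\eps+1} u_\eps'(\delta_{1,\eps}) = 0$ since $\delta_{1,\eps}$ is a critical point of $u_\eps$.

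Next, I would identify the critical points and zeros of $w_\eps$ from those of $u_\eps$. Since $u_\eps'(r) = 0$ iff $r \in \{\delta_{k,\eps}\}_{k=1}^m$, the critical points of $w_\eps$ in $[0,1]$ are $\widetilde\delta_{k,\eps} := \delta_{k,\eps}/\delta_{1,\eps}$ ($k=1,\ldots,m$), ordered as $1 = \widetilde\delta_{1,\eps} > \cdots > \widetilde\delta_{m,\eps}=0$. The zeros of $u_\eps$ lying in $(0,\delta_{1,\eps})$ are precisely $\rho_{2,\eps}>\cdots>\rho_{m,\eps}$, so the zeros of $w_\eps$ in $(0,1)$ are $\widetilde\rho_{k,\eps} := \rho_{k+1,\eps}/\delta_{1,\eps}$ for $k=1,\ldots,m-1$. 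Thus $w_\eps$ is a Neumann solution with $m-1$ interior zeros, and the hypothesis (Theorem \ref{explicit:thm_Neumann} for that class) applies to it.

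Finally, I would read off the four limits of Theorem \ref{explicit:thm_Neumann} for $w_\eps$ and translate them back using the chain-rule identities
\[
w_\eps(\widetilde\delta_{k,\eps}) = \delta_{1,\eps}^{\alpha_\eps}\, u_\eps(\delta_{k,\eps}),\qquad w_\eps'(\widetilde\rho_{k,\eps}) = \delta_{1,\eps}^{\alpha_\eps+1}\, u_\eps'(\rho_{k+1,\eps}).
\]
Substituting these, together with $\widetilde\delta_{k,\eps} = \delta_{k,\eps}/\delta_{1,\eps}$ and $\widetilde\rho_{k,\eps} = \rho_{k+1,\eps}/\delta_{1,\eps}$, immediately yields \eqref{A0} (from the $\widetilde D$-limit, $k=1,\ldots,m$) and \eqref{A01} (from the $\widetilde d$-limit, $k=2,\ldots,m-1$, noting $\widetilde d$ is only defined there). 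For \eqref{A02} and \eqref{A03}, the same dictionary with the index shift $j=k+1$ converts the $\widetilde Z$- and $\widetilde z$-limits for $k\in\{1,\ldots,m-1\}$ into the claimed statements for $j\in\{2,\ldots,m\}$; a direct inspection shows that the exponents $(2jn-3n+4)/(2(n-2))$ and $(-2(j-1)n+2n-2)/(n(n-2))$ in \eqref{A02}, \eqref{A03} are exactly what appears after the shift $k\mapsto j-1$ in the Neumann rates.

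This proof is essentially a clean rescaling argument with no analytical obstacle: the only delicate point is the bookkeeping of the index shift ($k\leftrightarrow k-1$) between Dirichlet zeros of $u_\eps$ and Neumann zeros of $w_\eps$, and a careful verification that the corresponding $\widetilde Z$, $\widetilde z$ exponents line up with those declared in \eqref{A02} and \eqref{A03}.
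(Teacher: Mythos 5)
Your proposal is correct and is essentially identical to the paper's own proof: the paper also rescales $w_\eps(x)=\delta_{1,\eps}^{\frac{2(n-2)}{4-\eps(n-2)}}u_\eps(\delta_{1,\eps}x)$, observes it is a Neumann solution with $m-1$ interior zeros, applies Theorem \ref{explicit:thm_Neumann} to it, and translates back via $\widehat\delta_k\delta_{1,\eps}=\delta_{k,\eps}$ and $\widehat\rho_k\delta_{1,\eps}=\rho_{k+1,\eps}$, with exactly the index shift you describe.
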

\begin{proof}
To ease notation we omit the $\eps$ subindices of $u_\eps$, $\delta_{k,\eps}$ and $\rho_{k,\eps}$. Observe that $w:B\to\R$ given by $w(x):=\delta_1^{\frac{2(n-2)}{4-\eps(n-2)}}  u(x \delta_1) $
is a Neumann solution with $m-1$ interior zeros. Let $1= \widehat \delta_1>\widehat\delta_2>\ldots >\widehat\delta_{m}=0$ be the decreasing sequence of all the critical points of $w$ in $[0,1]$ and $1> \widehat\rho_1>\widehat\rho_2>\ldots>\widehat\rho_{m-1}>0$ the decreasing sequence of all the zeros of $w$ in $[0,1]$. Then, by Theorem \ref{explicit:thm_Neumann},
\begin{align*}
\lim_{\eps\to 0}|w(\widehat\delta_k)|\ 
(\kappa_n\epsilon)^{\frac{2kn-3n+2}{2n}}
&=\widetilde D(k,m)&&  \text{for $k\in\{1,\ldots,m\}$},\\
\lim_{\eps\to 0}\widehat\delta_{k}\ 
(\kappa_n\epsilon)^{-\frac{2 (k-1)}{n-2}}
&=\widetilde d(k,m)&&\text{for $k\in\{2,\ldots,m-1\}$ if $m\geq 3$},\\
\lim_{\eps\to 0}|w'(\widehat\rho_k)|\
(\kappa_n\epsilon)^{\frac{2kn-3n+4}{2 (n-2)}}
&=\widetilde Z(k,m)&&\text{for $k\in\{1,\ldots,m-1\}$},\\
\lim_{\eps\to 0} \widehat \rho_k\ 
(\kappa_n\epsilon)^{-\frac{2kn-2n+2}{n(n-2)}}
&=\widetilde z(k,m)&&\text{for $k\in\{1,\ldots,m-1\}$}.
\end{align*}
The claim follows since $\widehat \rho_{k}\delta_1=\rho_{k+1}$ for $k\in\{1,\ldots,m-1\}$ and $\widehat \delta_k \delta_1=\delta_k$ for $k\in\{1,\ldots,m\}$.
\end{proof}

\medbreak

Define 
\begin{align*}
p_\eps:=\frac{n+2}{n-2}-\eps. 
\end{align*}
The solution $u_\eps$ satisfies the so-called (pointwise) radial Pohozaev identity (see the proof of \cite[Theorem 2.1]{C10}):
\begin{align}\label{poho1}
\Big(\frac{r^n}{2n}(u_\eps'(r))^2+\frac{n-2}{2n}r^{n-1}u_\eps'(r) u(r) &+\frac{r^n}{n(p_\eps+1)}|u_\eps(r)|^{p_\eps+1} \Big)' \nonumber\\
&=r^{n-1}|u_\eps(r)|^{p_\eps+1} \Big(\frac{1}{p_\eps+1}-\frac{n-2}{2n}\Big)
\end{align}
for $r=|x|\in [0,1]$. Since
\begin{align*}
 \lim_{\eps\to 0}\frac{\frac{1}{p_\eps+1}-\frac{n-2}{2n}}{\varepsilon(\frac{n-2}{2n})^2} =1,\qquad \text{ that is, }\qquad
 2n\eps^{-1}\Big(\frac{1}{p_\eps+1}-\frac{n-2}{2n}\Big)=\frac{(n-2)^2}{2n}+o(1),
\end{align*}
and
\begin{align*}
\frac{2n}{n(p_\eps+1)}=\frac{2}{\frac{2n}{n-2}-\eps}=\frac{n-2}{n-\eps\frac{n-2}{2}},
\end{align*}
then, identity \eqref{poho1} can be rewritten as
\begin{align}\label{eq:pohozaev}
\varepsilon^{-1}\Big(
(u_\eps'(r))^2r^n+(n-2)r^{n-1}u_\eps'(r) u(r)
&+\frac{(n-2)}{n-\varepsilon\frac{n-2}{2}}|u_\eps(r)|^{\frac{2n}{n-2}-\varepsilon}r^n 
\Big)'\nonumber\\
&= \Big(\frac{(n-2)^2}{2n}+o(1)\Big)|u_\eps(r)|^{\frac{2n}{n-2}-\varepsilon}r^{n-1}.
\end{align}
for $r=|x|\in [0,1]$, where $o(1)$ is a function of $\eps$ such that $\lim_{\varepsilon\to 0}o(1)=0$.

\medskip

We use $\chi_A$ to denote the characteristic function of $A\subset \R^N$, that is, $\chi_A(x)=1$ if $x\in A$ and $\chi_A(x)=0$ if $x\not\in A$.  The following result is shown (in more generality) in \cite{DIP17}.

\begin{theo}\label{dip:thm}
For $\eps\in (0,\frac{4}{n-2})$ and $m\geq 2$, let $u_\eps$ be the solution of \eqref{sceq}, \eqref{dbc} with $m-1$ interior zeros such that $u_\eps(\delta_{1,\eps})>0$, and let
  \begin{align*}
   p_\eps:=\frac{n+2}{n-2}-\eps,\qquad z_\varepsilon(x):=\frac{u_\eps\left( u_\eps(\delta_{1,\eps})^{\frac{1-p_\eps}{2}} x \right)}{u(\delta_{1,\eps})}\chi_{\{u_\eps(\delta_{1,\eps})^{\frac{p_\eps-1}{2}}\rho_{2,\eps}<|x|<u_\eps(\delta_{1,\eps})^\frac{p_\eps-1}{2}\}} 
  \end{align*}
for $x\in\R^N$.  There exist $v\in L^{1}(\R^N)$ and $\eps_0(n,m)=\eps_0\in(0,\frac{4}{n-2})$ such that 
\begin{align}\label{dip_aux1}
|z_\eps|^{p_\eps},\  |z_\eps|^{p_\eps+1}\leq v\qquad \text{ in }\R^N\qquad \text{for all }\quad\eps\in (0,\eps_0);
\end{align} 
moreover,
\begin{align}\label{dip_aux2}
 \lim_{\eps \to 0}u_\eps(\delta_{1,\eps})^{\frac{p_\eps-1}{2}}=\infty,\qquad \lim_{\eps\to 0}\delta_{1,\eps} u_\eps(\delta_{1,\eps})^{\frac{p_\eps-1}{2}}=0,
\end{align}
and, for every compact set $K\subset \R^N\backslash\{0\}$,
\begin{align}\label{dip_aux3}
 \lim_{\varepsilon\to 0}\|z_\eps-U\|_{C^2(K)}=0,\qquad \text{where } \quad U(x):=\Big(1+\frac{|x|^2}{n(n-2)}\Big)^{-\frac{n-2}{2}}\quad \text{ for }x\in\R^N.
 \end{align}
\end{theo}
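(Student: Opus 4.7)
The plan is to follow the standard concentration-compactness blow-up strategy for critical-exponent problems. First, I would introduce the scaling $\mu_\eps := u_\eps(\delta_{1,\eps})^{(1-p_\eps)/2}$ and note that $w_\eps(y) := u_\eps(\mu_\eps y)/u_\eps(\delta_{1,\eps})$ still satisfies $-\Delta w_\eps = |w_\eps|^{p_\eps-1} w_\eps$ on $B_{1/\mu_\eps}$, with $z_\eps$ being the restriction of $w_\eps$ to the outermost positive annulus. On this annulus, $0 \leq w_\eps \leq 1$ attains its maximum $1$ at $y = \delta_{1,\eps}/\mu_\eps$, so all further analysis reduces to a blow-up study of this rescaled problem.

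Next I would establish the blow-up rates in \eqref{dip_aux2}. For $\mu_\eps \to 0$: assume on the contrary $u_\eps(\delta_{1,\eps})$ stays bounded along a subsequence; combined with a uniform Dirichlet energy bound (the Dirichlet analogue of Lemma \ref{Ebound}, which follows from $\int_B |u_\eps|^{2^*-\eps} \to m S_n^{n/2}$ as in \cite[Proposition 3.1]{DIP17}), radial elliptic estimates produce a nontrivial $C^2$-limit solving the critical equation on $(0,1)$ with Dirichlet condition at $1$, which Pohozaev excludes. For $\delta_{1,\eps}/\mu_\eps \to 0$: if this quantity stayed bounded below along a subsequence, the compactness argument below would produce a positive entire radial solution of $-\Delta w_0 = w_0^{(n+2)/(n-2)}$ whose unique critical point lies at a positive radius, contradicting the Aubin-Talenti classification \cite{A76,T76}.

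Third, I would identify the limit. Fix a compact set $K \subset \R^n \setminus \{0\}$. For small $\eps$, $K$ sits inside the support of $z_\eps$ (since $1/\mu_\eps \to \infty$ and $\delta_{1,\eps}/\mu_\eps \to 0$) with $0 \leq z_\eps \leq 1$, so the radial ODE plus this $L^\infty$ bound yields uniform $C^{2,\alpha}(K)$ estimates; extract $z_\eps \to w_0$ in $C^2(K)$. Scale invariance of the Dirichlet energy under the rescaling produces a uniform bound on $\|\nabla z_\eps\|_{L^2(\R^n)}$, so $w_0 \in \mathcal{D}^{1,2}(\R^n)$ and the potential singularity at $0$ is removable; since $w_0(0) = 1$ is the maximum, the Aubin-Talenti classification pins down $w_0 = U$, and uniqueness removes subsequences.

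The main obstacle is the uniform $L^1$ domination \eqref{dip_aux1}. While $U^{p+1}(x) \asymp |x|^{-2n}$ and $U^p(x) \asymp |x|^{-(n+2)}$ are both integrable on $\R^n$, the challenge is to produce a uniform-in-$\eps$ pointwise majorant on $z_\eps$ matching the decay of $U$ at infinity. My plan is to split $\R^n = \{|y| \leq R\} \cup \{|y| > R\}$: on the bounded piece the $C^2$ convergence of the previous step dominates $|z_\eps|^{p_\eps+1}$ by $2U^{p+1}$ for small $\eps$; on the outer piece I would aim at $z_\eps(y) \leq C|y|^{2-n}$ uniformly in $\eps$, either through a Kelvin-inversion plus maximum principle / moving-plane comparison, or through weighted integral estimates coming from the radial Pohozaev identity \eqref{poho1}. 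The approach in \cite{DIP17} streamlines this via Morse-index bounds; in any case the most delicate region is the transition layer near the outer boundary $|y| = 1/\mu_\eps$, where the Dirichlet condition forces $z_\eps$ to vanish and uniform control is hardest to secure.
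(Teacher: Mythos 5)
Your outline reproduces, from scratch, the content of the results of \cite{DIP17}, whereas the paper's proof of Theorem \ref{dip:thm} is deliberately short: it quotes \cite{DIP17} directly, taking \eqref{dip_aux2} from equations (3.20) and (3.40) there, \eqref{dip_aux3} from Theorem 3.7 there, and, crucially, deriving the majorant in \eqref{dip_aux1} from the uniform bubble-shaped pointwise bound obtained by combining Proposition 3.6 and Corollary 3.12 of \cite{DIP17}, namely $|u_\eps(x)|\leq u_\eps(\delta_{1,\eps})\bigl[1+\tfrac{1}{2n(n-2)}u_\eps(\delta_{1,\eps})^{p_\eps-1}|x|^2\bigr]^{-\frac{n-2}{2}}$ for $\gamma^{-1/n}\delta_{1,\eps}\leq|x|<1$, which rescales to $|z_\eps(y)|\leq\bigl[1+\tfrac{1}{2n(n-2)}|y|^2\bigr]^{-\frac{n-2}{2}}$ and immediately yields an integrable dominating function. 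That pointwise estimate is exactly the step you did not prove: you state the target decay $z_\eps(y)\leq C|y|^{2-n}$ on the outer region and offer two candidate methods (Kelvin inversion plus moving planes, or weighted Pohozaev estimates) without carrying either out. This is a genuine gap, not a routine verification: the solution is sign-changing and the bound is needed on an annular nodal region with a zero at each end, so the standard moving-plane/comparison machinery for positive solutions does not apply off the shelf, and it is precisely to obtain this uniform-in-$\eps$ decay that \cite{DIP17} develops its finer (Morse-index based) analysis. Monotonicity of $z_\eps$ in $|y|$ on the annulus only gives a constant bound, which is not integrable to infinity, so the dominated-convergence step \eqref{dip_aux1} is not secured by what you actually establish.

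There are also smaller soft spots in the sketch. Your argument for the second limit in \eqref{dip_aux2} only excludes $\delta_{1,\eps}u_\eps(\delta_{1,\eps})^{\frac{p_\eps-1}{2}}$ converging to a finite positive value (a critical point of the limit bubble at positive radius); it does not address the possibility that this quantity diverges, in which case the rescaled maximum escapes to infinity and no local contradiction arises. Similarly, the claim that $u_\eps(\delta_{1,\eps})$ bounded leads, via elliptic estimates, to a nontrivial limit contradicting Pohozaev needs an argument for nontriviality and for control of the inner boundary $\rho_{2,\eps}$ (recall $\delta_{1,\eps}$ is only the maximum point on the outermost nodal interval, not globally), and the asserted "scale invariance" of the Dirichlet energy under your rescaling carries $\eps$-dependent factors of the form $u_\eps(\delta_{1,\eps})^{O(\eps)}$ whose boundedness itself requires an a priori estimate. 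Each of these is handled in \cite{DIP17}, which is why the paper proves the theorem by citation rather than by redoing the blow-up analysis.
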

\begin{proof}
Statement \eqref{dip_aux2} is a direct consequence of  \cite[equations (3.20), (3.40)]{DIP17}, while \eqref{dip_aux3} follows from \cite[Theorem 3.7]{DIP17}. Now, by combining Proposition 3.6 and Corollary 3.12 from \cite{DIP17},  we have the existence of $\eps_0(n,m)=\eps_0\in(0,\frac{4}{n-2})$ and $\gamma\in (0,1)$  such that 
\[
|u_\eps(x)|\leq \frac{u(\delta_{1,\eps})}{\left[1+\frac{1}{2n(n-2)} u(\delta_{1,\eps})^{p_\eps-1}|x|^2\right]^\frac{n-2}{2}}\qquad \forall x\in B:\ \gamma^{-\frac{1}{n}} \delta_{1,\eps} \leq |x|<1
\]
Therefore,
\[
|z_\eps(x)|\leq \frac{1}{\left[1+ \frac{1}{2n(n-2)}|x|^2\right]^\frac{n-2}{2}}  \qquad \forall x\in \R^n:\ \gamma^{-\frac{1}{n}} \delta_{1,\eps} u(\delta_{1,\eps})^\frac{p_\eps-1}{2}\leq |x|\leq u(\delta_{1,\eps})^\frac{p_\eps-1}{2}.
\]
On the other hand, since $\delta_{1,\eps}$ it the global maximum of $u_\eps$ on $[\rho_{2,\eps},1]$ and $z_\eps=0$ for $|x|\notin [\rho_{2,\eps} u(\delta_{1,\eps})^\frac{p_\eps-1}{2},u(\delta_{1,\eps})^\frac{p_\eps-1}{2}]$, we have 
\[
|z_\eps|\leq 1 \qquad 
\text{ in }\R^n.
\]
In particular, $|z_\eps|^{p_\eps+1}\leq |z_\eps|^{p_\eps}$. Furthermore, for $|x|>1$ and $\eps$ small, 
\begin{align*}
|z_\eps|^{p_\eps}=|z_\eps|^{\frac{n+2}{n-2}-\eps}\leq \left[1+ \frac{1}{2n(n-2)}|x|^2\right]^{-\frac{n+2}{2}+\eps\frac{n-2}{2}}\leq \left[1+ \frac{1}{2n(n-2)}|x|^2\right]^{-\frac{n+1}{2}}
\end{align*}
and $|z_\eps|^{p_\eps}\leq 1$ for $|x|<1$, so \eqref{dip_aux1} also holds. 
\end{proof}

In the following we use some standard properties of the beta function 
\begin{align*}
B(x,y)=\int_0^1 t^{x-1} (1-t)^{y-1}\, dt=\frac{\Gamma(x)\Gamma(y)}{\Gamma(x+y)}\quad \text{ for }x,y>0,
\end{align*}
where $\Gamma$ is the Gamma function (recall that $\Gamma(x+1)=x\Gamma(x)$ for all $x>0$).
In particular, we use the identity 
\begin{align}\label{EMO}
\int_0^\infty \Big(1+\frac{t^2}{n(n-2)}\Big)^{-y}\, t^{n-1}\, dt
=\frac{1}{2}(n(n-2))^{\frac{n}{2}}
B(\,\frac{n}{2}\,,\,y-\frac{n}{2}\,)
\end{align}
for $y>\frac{n}{2}$; see, for example \cite[page 10 formula (16)]{EMOT81}.

\begin{lemma}\label{2nrD:lemma2}
The profiles $\delta_{1,\eps}$, $|u(\delta_{1,\eps})|$, and $|u_\eps'(1)|$ satisfy that
\begin{align}\label{A1eps}
\lim_{\varepsilon\to 0}(\kappa_n \varepsilon)^{-1}|u_\eps(\delta_{1,\eps})|^{-\varepsilon\frac{n-2}{2}}\Big(
(u_\eps'(1))^2
-\frac{n-2}{n-\varepsilon\frac{n-2}{2}}&|u_\eps(\delta_{1,\eps})|^{\frac{2n}{n-2}-\varepsilon}\delta_{1,\eps}^n 
\Big)=  n^\frac{n-2}{2}(n-2)^\frac{n+2}{2}.\quad 
\end{align}
\end{lemma}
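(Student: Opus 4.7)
The natural strategy is to integrate the pointwise radial Pohozaev identity \eqref{eq:pohozaev} between the two boundary points dictated by the geometry of $u_\eps$ on $[\delta_{1,\eps},1]$, namely the critical point $\delta_{1,\eps}$ (where $u_\eps'$ vanishes) and the Dirichlet boundary $r=1$ (where $u_\eps$ vanishes). The plan is to observe that, at $r=\delta_{1,\eps}$, the terms containing $u_\eps'$ drop out, leaving only the pressure-like term $\frac{n-2}{n-\varepsilon(n-2)/2}|u_\eps(\delta_{1,\eps})|^{\frac{2n}{n-2}-\varepsilon}\delta_{1,\eps}^n$; while at $r=1$ the two terms containing $u_\eps(1)$ vanish, leaving only $(u_\eps'(1))^2$. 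The integrated left-hand side of \eqref{eq:pohozaev} therefore reproduces \emph{exactly} the bracket appearing in \eqref{A1eps}, up to the prefactor $\varepsilon^{-1}$.

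The remaining task is to evaluate the limit of the right-hand side
\begin{equation*}
\Bigl(\frac{(n-2)^2}{2n}+o(1)\Bigr)\int_{\delta_{1,\eps}}^1 |u_\eps(r)|^{\frac{2n}{n-2}-\varepsilon} r^{n-1}\, dr,
\end{equation*}
after dividing by $\kappa_n|u_\eps(\delta_{1,\eps})|^{\varepsilon(n-2)/2}$. I would rescale using the blow-up variable of Theorem \ref{dip:thm}: setting $M_\eps:=|u_\eps(\delta_{1,\eps})|$ and changing variables $x=M_\eps^{(p_\eps-1)/2}r$, the factor $|u_\eps(r)|^{\frac{2n}{n-2}-\varepsilon}$ becomes $M_\eps^{\frac{2n}{n-2}-\varepsilon}|z_\eps(x)|^{\frac{2n}{n-2}-\varepsilon}$, and the Jacobian and $r^{n-1}$ contribute a factor $M_\eps^{-n(p_\eps-1)/2}$. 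A direct computation using $p_\eps-1=\frac{4}{n-2}-\varepsilon$ gives a net power $M_\eps^{\varepsilon(n-2)/2}$, which cancels precisely with the normalizing factor $M_\eps^{-\varepsilon(n-2)/2}$ in \eqref{A1eps}.

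After this cancellation the integral to evaluate is
\begin{equation*}
\int_{M_\eps^{(p_\eps-1)/2}\delta_{1,\eps}}^{M_\eps^{(p_\eps-1)/2}}|z_\eps(x)|^{\frac{2n}{n-2}-\varepsilon}x^{n-1}\,dx,
\end{equation*}
whose integration region expands to $(0,\infty)$ by \eqref{dip_aux2}. Using the local $C^2$ convergence $z_\eps\to U$ from \eqref{dip_aux3} together with the uniform integrable bound \eqref{dip_aux1}, dominated convergence gives the limit
\begin{equation*}
\int_0^\infty\Bigl(1+\frac{x^2}{n(n-2)}\Bigr)^{-n}x^{n-1}\,dx=\tfrac{1}{2}(n(n-2))^{n/2}\,\frac{\Gamma(n/2)^2}{\Gamma(n)}
\end{equation*}
by the beta-function identity \eqref{EMO} applied with $y=n$.

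The last step is to combine everything and use $\Gamma(n/2)^2/\Gamma(n)=4\kappa_n/(n-2)$ from \eqref{Cchi}. The prefactor $\frac{(n-2)^2}{2n\kappa_n}$ multiplies the above integral to give
\begin{equation*}
\frac{(n-2)^2}{2n\kappa_n}\cdot\frac{1}{2}(n(n-2))^{n/2}\cdot\frac{4\kappa_n}{n-2}=\frac{(n-2)(n(n-2))^{n/2}}{n}=n^{(n-2)/2}(n-2)^{(n+2)/2},
\end{equation*}
which is exactly the right-hand side of \eqref{A1eps}. The main technical hurdle is the bookkeeping of the rescaling: one must verify that the seemingly awkward exponent $-\varepsilon(n-2)/2$ on $|u_\eps(\delta_{1,\eps})|$ in the statement is precisely the one produced by the change of variable $x=M_\eps^{(p_\eps-1)/2}r$, and that the integration region indeed sits inside the support of $z_\eps$ (namely $(M_\eps^{(p_\eps-1)/2}\rho_{2,\eps},M_\eps^{(p_\eps-1)/2})$) so that the bound \eqref{dip_aux1} is legitimately applicable.
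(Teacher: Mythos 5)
Your proposal is correct and follows essentially the same route as the paper: integrate the pointwise radial Pohozaev identity \eqref{eq:pohozaev} over $(\delta_{1,\eps},1)$ (where the boundary terms collapse exactly to the bracket in \eqref{A1eps}), rescale with $r=|u_\eps(\delta_{1,\eps})|^{(1-p_\eps)/2}t$ so the net power $|u_\eps(\delta_{1,\eps})|^{\eps(n-2)/2}$ cancels the normalizing factor, and pass to the limit via Theorem \ref{dip:thm} and \eqref{EMO} with $y=n$. Your final constant bookkeeping, including the identity $\Gamma(\tfrac n2)^2/\Gamma(n)=4\kappa_n/(n-2)$, reproduces the paper's value $n^{\frac{n-2}{2}}(n-2)^{\frac{n+2}{2}}$.
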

\begin{proof}  To ease notation with omit some $\eps$ subindices. Recall the definitions of $z_\eps$ and $U$ from Theorem \ref{dip:thm}, and that $p_\eps:=\frac{n+2}{n-2}-\eps$. Assume without loss of generality that $u(\delta_1)>0$. Integrating \eqref{eq:pohozaev} in $(\delta_1,1)$ we have that
\begin{align}
\varepsilon^{-1}\Big(
(u'(1))^2
-\frac{n-2}{n-\varepsilon\frac{n-2}{2}}u(\delta_1)^{\frac{2n}{n-2}-\varepsilon}(\delta_1)^n 
\Big)
= \Big(\frac{(n-2)^2}{2n}+o(1)\Big)\int_{\delta_1}^1 u(r)^{\frac{2n}{n-2}-\varepsilon}r^{n-1}\ dr.\label{poho11}
\end{align}
Using the change of variables $r=u(\delta_1)^\frac{1-p_\eps}{2}t$, Theorem \ref{dip:thm}, dominated convergence, and \eqref{EMO}, we have
\begin{align}
\lim_{\eps\to0}|u(\delta_1)|^{-\varepsilon\frac{n-2}{2}}&\int_{{\delta_1}}^1u(r)^{p_\eps+1} r^{n-1}\ dr\nonumber\\
& =\lim_{\eps\to0}\int_{{\delta_1}u(\delta_1)^\frac{p_\eps-1}{2}}^{u(\delta_1)^\frac{p_\eps-1}{2}}\left|\frac{u\left(u(\delta_1\right)^\frac{1-p_\eps}{2}t)}{u(\delta_1)}\right|^{p_\eps+1} t^{n-1}u(\delta_1)^{\frac{1-p_\eps}{2}n+p_\eps+1-\varepsilon\frac{n-2}{2}}\ dt\nonumber\\
&=\lim_{\eps\to0}\int_{{\delta_1}u(\delta_1)^\frac{p_\eps-1}{2}}^{u(\delta_1)^\frac{p_\eps-1}{2}}|z_\eps(t)|^{p_\eps+1} t^{n-1}\ dt \nonumber\\
&=\int_{0}^\infty|U(t)|^{\frac{2 n}{n-2}} t^{n-1}\ dt=\frac{((n-2) n)^{\frac{n}{2}} \Gamma \left(\frac{n}{2}\right)^2}{2 \Gamma (n)}.\label{poho2}
\end{align}
But then, by \eqref{poho11} and \eqref{poho2},
\begin{align*}
\lim_{\varepsilon\to 0}\varepsilon^{-1}|u(\delta_1)|^{-\varepsilon\frac{n-2}{2}}\Big(
(u'(1))^2
-\frac{(n-2)}{n-\varepsilon\frac{(n-2)}{2}}|u(\delta_1)|^{\frac{2n}{n-2}-\varepsilon}\delta_1^n 
\Big)
= \frac{(n(n-2))^{2+\frac{n}{2}}}{n^3}  \frac{\Gamma \left(\frac{n}{2}\right)^2}{4 \Gamma (n)},
\end{align*}
and the claim follows by recalling the definition of $\kappa_n$ in \eqref{Cchi}.
\end{proof}

\begin{lemma}\label{L1}
The sequences $|u_\eps(\delta_{1,\eps})|$ and $|u_\eps'(1)|$ satisfy that
\begin{align}\label{A2eps}
\lim_{\eps\to 0} |u_\eps(\delta_{1,\eps})|^{1-\frac{\varepsilon(n-2)}{2}} |u_\eps'(1)|= \frac{((n-2) n)^{\frac{n}{2}}}{n}.
\end{align}
\end{lemma}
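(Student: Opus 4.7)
\medskip

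The plan is to integrate the radial equation between $\delta_{1,\eps}$ and $1$, rescale as in Theorem \ref{dip:thm}, and pass to the limit with dominated convergence. Assume without loss of generality that $u_\eps(\delta_{1,\eps})>0$, so that $u_\eps>0$ on $(\delta_{1,\eps},1)$ because this interval lies between consecutive critical point and zero. Writing the equation as $-(r^{n-1}u_\eps'(r))' = r^{n-1} |u_\eps(r)|^{p_\eps-1} u_\eps(r)$ with $p_\eps=\frac{n+2}{n-2}-\eps$ and integrating from $\delta_{1,\eps}$ to $1$, using $u_\eps'(\delta_{1,\eps})=0$, gives
\begin{equation*}
|u_\eps'(1)| = \int_{\delta_{1,\eps}}^1 r^{n-1} u_\eps(r)^{p_\eps}\, dr.
\end{equation*}

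Next I would apply the change of variables $r = u_\eps(\delta_{1,\eps})^{\frac{1-p_\eps}{2}} t$, precisely the one used to define $z_\eps$ in Theorem \ref{dip:thm}. A direct power count shows
\begin{equation*}
\frac{(1-p_\eps)n}{2} + p_\eps = -1 + \frac{(n-2)\eps}{2},
\end{equation*}
so multiplying through yields the clean identity
\begin{equation*}
u_\eps(\delta_{1,\eps})^{1-\frac{\eps(n-2)}{2}} |u_\eps'(1)| = \int_{\delta_{1,\eps} u_\eps(\delta_{1,\eps})^{\frac{p_\eps-1}{2}}}^{u_\eps(\delta_{1,\eps})^{\frac{p_\eps-1}{2}}} t^{n-1} |z_\eps(t)|^{p_\eps}\, dt.
\end{equation*}
By \eqref{dip_aux2} the lower limit of integration tends to $0$ and the upper limit tends to $+\infty$, while \eqref{dip_aux3} and \eqref{dip_aux1} provide pointwise convergence $z_\eps \to U$ on compact subsets of $\R^n \setminus \{0\}$ together with a uniform $L^1$-dominating function $v$ for $|z_\eps|^{p_\eps}$. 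Dominated convergence then delivers
\begin{equation*}
\lim_{\eps\to 0} u_\eps(\delta_{1,\eps})^{1-\frac{\eps(n-2)}{2}} |u_\eps'(1)| = \int_0^\infty t^{n-1} U(t)^{\frac{n+2}{n-2}}\, dt.
\end{equation*}

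Finally, since $U(t)^{\frac{n+2}{n-2}} = \bigl(1+\frac{t^2}{n(n-2)}\bigr)^{-\frac{n+2}{2}}$, formula \eqref{EMO} with $y=\frac{n+2}{2}$ gives
\begin{equation*}
\int_0^\infty t^{n-1} U(t)^{\frac{n+2}{n-2}}\, dt = \tfrac{1}{2}(n(n-2))^{n/2} B\bigl(\tfrac{n}{2},1\bigr) = \tfrac{1}{2}(n(n-2))^{n/2} \cdot \tfrac{2}{n} = \frac{((n-2)n)^{n/2}}{n},
\end{equation*}
which is exactly \eqref{A2eps}. The only subtle point is verifying that the rescaled integrand is truly dominated uniformly in $\eps$ near the origin and near infinity so that dominated convergence applies on the growing intervals; this is precisely what the bound \eqref{dip_aux1} in Theorem \ref{dip:thm} is designed to provide, so the argument goes through without further difficulty.
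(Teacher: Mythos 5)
Your proposal is correct and follows essentially the same route as the paper: integrate the radial equation on $(\delta_{1,\eps},1)$ using $u_\eps'(\delta_{1,\eps})=0$, rescale by $u_\eps(\delta_{1,\eps})^{\frac{p_\eps-1}{2}}$ to pass to $z_\eps$, and conclude by dominated convergence via Theorem \ref{dip:thm} together with \eqref{EMO} at $y=\frac{n+2}{2}$. The exponent bookkeeping and the value of the limiting integral both match the paper's computation.
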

\begin{proof}
To ease notation we omit some $\eps$ subindices. Recall that $p_\eps=\frac{n+2}{n-2}-\varepsilon$ and assume without loss of generality that $u(\delta_1)>0$.  Integrating in $(\delta_1,1)$ the identity
\begin{align*}
 -(u'(r) r^{n-1})' = |u(r)|^{p_\eps-1}u(r) r^{n-1}\quad \text{ in }(0,1),\qquad u'(0)=u(1)=0,
\end{align*}
using the change of variables $t=r u(\delta_1)^\frac{p_\eps-1}{2}$, and the notation from Theorem \ref{dip:thm}, we have that
\begin{multline*}
|u'(1)|=\int_{\delta_1}^1 |u(r)|^{p_\eps} r^{n-1}\ dr = u(\delta_1)^\frac{(1-p_\eps)n}{2}\int_{\delta_1 u(\delta_1)^\frac{p_\eps-1}{2}}^{u(\delta_1)^\frac{p_\eps-1}{2}} \left|u\left( u(\delta_1)^{\frac{1-p_\eps}{2}} t\right)\right|^{p_\eps} t^{n-1}\ dt\\
=u(\delta_1)^{p_\eps+\frac{(1-p_\eps)n}{2}}\int_{\delta_1 u(\delta_1)^\frac{p_\eps-1}{2}}^{u(\delta_1)^\frac{p_\eps-1}{2}} |z_\eps(t)|^{p_\eps} t^{n-1}\ dt=u(\delta_1)^{-1+\frac{\varepsilon(n-2)}{2}}\int_{\delta_1 u(\delta_1)^\frac{p_\eps-1}{2}}^{u(\delta_1)^\frac{p_\eps-1}{2}} |z_\eps(t)|^{p_\eps} t^{n-1}\ dt,
\end{multline*}
then, by Theorem \ref{dip:thm}, dominated convergence, and \eqref{EMO},
\begin{align}\label{lim}
\lim_{\eps\to 0} u(\delta_1)^{1-\frac{\varepsilon(n-2)}{2}} |u'(1)|= \int_{0}^{\infty} |U(t)|^{\frac{n+2}{n-2}} t^{n-1}\ dt=  \frac{((n-2) n)^{\frac{n}{2}}}{n},
\end{align}
as claimed.
\end{proof}

\medbreak

\begin{proof}[Proof of Proposition \ref{prop2}]  Arguing as in the proof of Proposition \ref{prop1}, using \eqref{A1eps}, \eqref{A2eps} and \eqref{A0} for $k=1$, we have
\begin{align}
(\kappa_n \varepsilon)^{-1}|u_\eps(\delta_{1,\eps})|^{-\varepsilon\frac{n-2}{2}}\Big(
(u_\eps'(1))^2
-\frac{2(n-2)}{2n-\varepsilon(n-2)}|u_\eps(\delta_{1,\eps})|^{\frac{2n}{n-2}-\varepsilon}\delta_{1,\eps}^n 
\Big)
&=  n^\frac{n-2}{2}(n-2)^\frac{n+2}{2} + o(1) \label{eq:proofProp2}
\end{align}
and
\begin{equation}\label{eq:proofProp22}
\begin{aligned}
|u_\eps(\delta_{1,\eps})|^{1-\frac{\varepsilon(n-2)}{2}} |u_\eps'(1)|&= n^\frac{n-2}{2}(n-2)^{\frac{n}{2}} + o(1),\\
\delta_1^\frac{2(n-2)}{4-\eps(n-2)}|u_\eps(\delta_{1,\eps})|(\kappa_n \eps)^\frac{2-n}{2n} &= \widetilde D(1,m)+o(1).
\end{aligned}
\end{equation}
From \eqref{eq:proofProp22} we see that
\[
|u_\eps(\delta_{1,\eps})|=(\kappa_n \eps)^\frac{n-2}{2n}\delta_{1,\eps}^\frac{2(2-n)}{4-\eps(n-2)}(\widetilde D(1,m)+o(1))
\]
and
\[
\begin{aligned}
|u_\eps'(1)|&=|u_\eps(\delta_{1,k})|^\frac{\eps(n-2)-2}{2}(n^\frac{n-2}{2}(n-2)^\frac{n}{2}+o(1))\\
		&=(\kappa_n \eps)^{\frac{n-2}{4n}(\eps(n-2)-2)}\delta_{1,\eps}^{(n-2)\frac{2-\eps(n-2)}{4-\eps(n-2)}}(\widetilde D(1,m)+o(1))^\frac{\eps(n-2)-2}{2}(n^\frac{n-2}{2}(n-2)^\frac{n}{2}+o(1)).
\end{aligned}
\]
Replacing these expressions in \eqref{eq:proofProp2} and using \eqref{sim}, we have
\begin{multline*}
(\kappa_n \eps)^{-1}\left(  
 (\kappa_n \eps)^{-\frac{n-2}{n}}\delta_1^{\frac{4(n-2)-\eps(n-2)^2}{4-\eps(n-2)}}  ( \widetilde D(1,m)^{-2} n^{n-2}(n-2)^n + o(1))   \right. \\
 \left. -  \frac{n-2}{n}(\kappa_n \eps) (\widetilde D(1,m)^\frac{2n}{n-2}+o(1))\right)=n^\frac{n-2}{2}(n-2)^\frac{n+2}{2}+o(1).
\end{multline*}
Thus $\delta_{1,\eps}=A \eps^{\frac{a+b\eps}{c+d\eps}}+o(1)$ for some $A,a,b,c,d\in \R$ and, again by \eqref{sim},
we can find by direct substitution that
\begin{align*}
\lim_{\eps\to 0}\delta_{1,\eps} (\kappa_n\epsilon)^{-\frac{2 (n-1)}{(n-2) n}}
&=(n-2)^{\frac{1}{2-n}-1} n^{\frac{1}{2-n}-1} \widetilde D(1,m)^{\frac{2}{n-2}} \left(\widetilde D(1,m)^{\frac{2 n}{n-2}}+(n-2)^{n/2} n^{n/2}\right)^{\frac{1}{n-2}}\\
&=d(1,m).
\end{align*}
Using this rate and \eqref{eq:proofProp22} we can now obtain similarly $|u_\eps(\delta_{1,\eps})|$ and $|u_\eps'(1)|=|u_\eps'(\rho_{1,\eps})|$, obtaining
\begin{align*}
 \lim_{\eps\to 0}(\kappa_n\epsilon)^\frac{1}{2} |u_\eps(\delta_{1,\eps})|&=\widetilde D(1,m) d(1,m)^{1-\frac{n}{2}}=D(1,m),\\
 \lim_{\eps\to 0}|u_\eps'(\rho_{1,\eps})|(\kappa_n\epsilon)^{-\frac{1}{2}}&= \frac{(n-2)^{\frac{n}{2}} n^{\frac{n}{2}-1} d(1,m)^{\frac{n}{2}-1}}{\widetilde D(1,m)}=Z(1,m).
\end{align*}

To determine the rest of the unknown rates we use \eqref{A0}--\eqref{A03}, namely,
\begin{align*}
& \lim_{\eps\to 0}(\kappa_n\epsilon)^{\frac{2k-1}{2}} |u_\eps(\delta_{k,\eps})|= d(1,m)^{\frac{2-n}{2}} \widetilde D(k,m)=D(k,m) \ \ \text{ for } k\in \{2,\ldots,m\},\\
& \lim_{\eps\to 0}\delta_{k,\eps} (\kappa_n\epsilon)^{-\frac{2 (k n-1)}{(n-2) n}} = d(1,m) \widetilde d(k,m)=d(k,m)\ \ \text{ for } k\in \{2,\ldots,m-1\},\ m\geq 3,\\
&\lim_{\eps\to 0}(\kappa_n \epsilon)^{\frac{2 k n-3 n+2}{2 n-4}} |u_\eps'(\rho_{k,\eps})|= d(1,m)^{-\frac{n}{2}} \widetilde Z(k-1,m)=Z(k,m) \ \ \text{ for } k\in \{2,\ldots,m\},\\
& \lim_{\eps\to 0}\rho_k (\kappa_n\epsilon)^{-\frac{2 (k-1)}{n-2}}= d(1,m)\widetilde z(k-1,m)=z(k,m) \ \ \text{ for } k\in \{2,\ldots,m\}.
\end{align*}
This ends the proof.
\end{proof}

\medbreak

\begin{proof}[Proof of Theorems \ref{explicit:thm_Dirichlet} and \ref{explicit:thm_Neumann}]
 The claims follow from Theorem \ref{1nD:thm} and Propositions \ref{prop1} and \ref{prop2}.
\end{proof}

\section{Remarks and consequences}\label{RC:sec}

\begin{proof}[Proof of Theorem \ref{pointwise:coro}]
Fix $a\in (0,1)$ and let $u_\eps$ be a radial solution of \eqref{sceq} with $m-1$ interior zeros.  Assume first that $m\geq 2$ and $u_\eps$ satisfies Neumann b.c. \eqref{nbc}. Then, by Lemma \ref{prop:lemma}, $\frac{|u_\eps|}{|u_\eps(1)|}\to 1$ uniformly in [a,1] as $\eps\to 0$, and, by Theorem \ref{explicit:thm_Neumann}, $\lim_{\eps\to 0}|u_\eps(1)| (\kappa_n\eps)^{\frac{2-n}{2n}} =\widetilde D(1,m)$ and thus
\begin{align*}
|u_\eps(1)|^{-1} =\frac{(\kappa_n\epsilon)^{\frac{2-n}{2n}}}{\widetilde D(1,m)+o(1)}
\qquad \text{ and }\qquad 
\lim_{\eps\to 0}|u_\eps(x)|\frac{(\kappa_n\epsilon)^{\frac{2-n}{2n}}}{\widetilde D(1,m)+o(1)}=\lim_{\eps\to 0}\frac{|u_\eps(x)|}{|u_\eps(1)|}=1,
\end{align*}
for $x\in[a,1]$. Then, $|u_\eps(x)|(\kappa_n\epsilon)^{\frac{2-n}{2n}}=\widetilde D(1,m)+o(1)
=(n(n-2))^{\frac{n-2}{4}} 
(m-1)^{\frac{n-2}{2n}}
+o(1)
$, with  $o(1)\to 0$ uniformly in $[a,1]$ as $\eps\to 0$.  

Now, let $m\geq 1$, fix $a\in (0,1)$, $x\in[a,1]$, and let $u_\eps$ satisfy Dirichlet b.c. \eqref{dbc}. Since $\delta_{1,\eps}\to 0$ as $\eps\to 0$, we may assume w.l.o.g. that $a>\delta_{1,\eps}$ and $u>0$ in $(\delta_{1,\eps},1)$.  To ease notation we omit some subindices $\eps$ and recall that $p_\eps:=\frac{n+2}{n-2}-\eps$. We now argue as in Lemma \ref{L1}. Note that
\begin{align*}
 -(u'(r) r^{n-1})' = |u(r)|^{p_\eps-1}u(r) r^{n-1}\quad \text{ in }(0,1),\qquad u'(0)=u(1)=0,
\end{align*}
Then, for $r>\delta_{1}$,
\begin{align}\label{reg}
 -u'(r) r^{n-1} = \int_{\delta_{1}}^ru(t)^{p_\eps} t^{n-1}\ dt,
\end{align}
but then, integrating in $(|x|,1)$ and using the change of variables $s=t u(\delta_{1})^\frac{p_\eps-1}{2}$
\begin{align*}
 u(|x|) & = \int_{|x|}^1r^{1-n}\int_{\delta_{1}}^r u(t)^{p_\eps} t^{n-1}\ dt\ dr\\
 & = u(\delta_{1})^{-1+\frac{\eps(n-2)}{2}}\int_{|x|}^1r^{1-n}\int_{\delta_{1}u(\delta_{1})^\frac{p_\eps-1}{2}}^{ru(\delta_{1})^\frac{p_\eps-1}{2}} \left[ \frac{u(s u(\delta_{1})^\frac{1-p_\eps}{2})}{u(\delta_1) }\right]^{p_\eps} s^{n-1}\ ds\ dr.
\end{align*}
Then, by dominated convergence, Theorem \ref{dip:thm}, equation \eqref{lim} and Theorem \ref{explicit:thm_Dirichlet},
\begin{align}\label{ec1}
 \lim_{\eps\to 0}|u_\eps(x)||u(\delta_{1,\eps})|=\frac{((n-2)n)^\frac{n}{2}}{n}
 \int_{|x|}^1y^{1-n}\ dy
 =((n-2)n)^{\frac{n}{2}-1}
 (|x|^{2-n}-1).
 \end{align} 
Since, by Theorem \ref{explicit:thm_Dirichlet},
 \begin{align}\label{ec2}
\lim_{\eps\to 0}   |u_\eps(\delta_{1,\eps})|\ (\kappa_n\epsilon)^{\frac{1}{2}}
      &=D(1,m)=(n(n-2))^\frac{n-2}{4}m^{-\frac{1}{2}}
 \end{align}
we have, by \eqref{ec1} and \eqref{ec2}, 
\begin{align}\label{aa1}
 |u_\eps(x)|(\kappa_n\epsilon)^{-\frac{1}{2}}=\frac{((n-2)n)^{\frac{n}{2}-1}}{D(1,m)}(|x|^{2-n}-1)+ o(1)=(n-2)n)^{\frac{n-2}{4}}m^{\frac{1}{2}}(|x|^{2-n}-1)+o(1),
 \end{align} 
where $o(1)\to 0$ pointwisely as $\eps\to 0$.  We now argue that this convergence is, in fact, uniform in $[a,1]$. Let $\eps_0>0$ be such that
\begin{align}\label{eps0}
0<u_\eps(a)<1\quad \text{ for all }\eps\in(0,\eps_0), 
\end{align}
 then, by \eqref{aa1} and since $u_\eps$ is decreasing in $[a,1]$ (because $a>\delta_{1,\eps}$), we have that $|u_\eps\epsilon^{-\frac{1}{2}}|<C$ in $[a,1]$ for some constant $C(m,n)=C>0$ independent of $\eps$. Then, by \eqref{eps0},  $\epsilon^{-\frac{1}{2}}u_\eps^{p_\eps}\leq \epsilon^{-\frac{1}{2}}u_\eps^{p_\eps}(a)\leq \epsilon^{-\frac{1}{2}}u_\eps(a)\leq C$ in $[a,1]$ for every $\eps\in(0,\eps_0)$, and thus, by \eqref{reg}, it follows that $\epsilon^{-\frac{1}{2}}\|u_\eps\|_{C^1[a,1]}$ is uniformly bounded; but then, by Arzel\`{a}-Ascoli theorem, \eqref{aa1} holds with $o(1)\to 0$ uniformly in $[a,1]$.
\end{proof}

\medbreak

\begin{proof}[Proof of Corollary \ref{dp:coro}]
In \cite[Theorem 1]{CD06} (see also \cite{PW07}) the following result is shown: given $n\geq 3$ and $m\geq 1$, for all sufficiently small $\eps>0$, there is a radial solution $u_\eps$ of \eqref{sceq}, \eqref{dbc} with exactly $m-1$ nodal spheres in $(0,1)$ which has the form \eqref{dpf} for some $f_\eps$ which is uniformly bounded in $B$ and where $\gamma_n=(n(n-2))^\frac{n-2}{4}$ and $\alpha_k$ are some positive constants for $k\in\{1,\ldots,m\}.$  

Let $\delta_{m,\eps}=0<\delta_{m-1,\eps}<\ldots< \delta_{1,\eps}<1$ be the sequence of critical points of $u_\eps$ and fix $j\in\{1,\ldots,m-1\}$. Then,  by Theorem \ref{explicit:thm_Dirichlet},
\begin{align*}
 \delta_{j,\eps}&=(d(j,m)+o(1))(\kappa_n\eps)^{\frac{2 (j n-1)}{n(n-2)}}.\quad  
\end{align*}
Let $d_\eps:=\frac{\delta_{j,\eps}}{\eps^{\frac{2 (j n-1)}{n(n-2)}}}= (d(j,m)+o(1))\kappa_n^{\frac{2 (j n-1)}{n(n-2)}}$ and, for $k\in\{1,\ldots,m\}$,
\begin{align*}
 A_{k,j,\eps}:=\Bigg(
 \frac{1}{1+[\alpha_k \eps^{\frac{1}{2}-k}]^\frac{4}{n-2}d_\eps^2\, \eps^{\frac{4(j n-1)}{n(n-2)}}}
 \Bigg)^\frac{n-2}{2} \eps^{j-k}
 =\Bigg(
 \frac{1}{1+\alpha_k^\frac{4}{n-2}d_\eps^2\, \eps^{(\frac{2}{n-2})(\frac{n-2+2n(j-k)}{n})}}
 \Bigg)^\frac{n-2}{2} \eps^{j-k}.
\end{align*}
Observe that $A_{j,j,\eps}=1+o(1)$, $A_{k,j,\eps}=o(1)$ if $j>k$, and 
\begin{align*}
 A_{k,j,\eps}
 =
 \Bigg(\frac{1}{\eps^{(\frac{2}{n-2})(\frac{2}{n}-1+2(k-j))}+\alpha_k^\frac{4}{n-2}d_\eps^2}\Bigg)^\frac{n-2}{2}
 \eps^{(-1+\frac{2}{n}+k-j)}=o(1)\qquad \text{ if }k>j,
\end{align*}
since $-1+\frac{2}{n}+k-j>0$. Then, by \eqref{dpf} and since $\eps^{j-\frac{1}{2}}f_\eps(d_\eps \eps^{\frac{2 (j n-1)}{n(n-2)}})\eps^\frac{1}{2}=o(1)$,
\begin{align*}
 \eps^{j-\frac{1}{2}}u_\eps(\delta_{j,\eps})=\eps^{j-\frac{1}{2}}
 u_\eps(d_\eps \eps^{\frac{2 (j n-1)}{n(n-2)}})&=\gamma_n \sum_{k=1}^m(-1)^{k+1}A_{k,j,\eps}
 \alpha_k-\eps^{j}f_\eps(d_\eps \eps^{\frac{2 (j n-1)}{n(n-2)}})\\
& = (-1)^{j+1}\gamma_n\alpha_j-o(1).
\end{align*}
On the other hand, by Theorem \ref{explicit:thm_Dirichlet} we know that 
$\lim_{\eps\to 0}(\kappa_n\eps)^{j-\frac{1}{2}}|u_\eps(\delta_{j,\eps})|=D(j,m)$ and thus
\begin{align*}
\alpha_j=\gamma_n^{-1}\lim_{\eps\to 0}\eps^{j-\frac{1}{2}}|u_\eps(\delta_{j,\eps})|=\gamma_n^{-1}\kappa_n^{\frac{1}{2}-j}D(j,m)
=\kappa_n^{\frac{1}{2}-j}\frac{ \Gamma (m-j+1)}{m^\frac{1}{2} \Gamma (m)} \text{ for } j\in \{1,\ldots,m-1\}.
\end{align*}
Finally, for $j=m$,
\begin{align*}
 \eps^{m-\frac{1}{2}}u_\eps(\delta_{m,\eps})&=\eps^{m-\frac{1}{2}}
 u_\eps(0)=\gamma_n \sum_{k=1}^m(-1)^{k+1}
 \alpha_k \eps^{m-k}- \eps^{m-\frac{1}{2}}f_\eps(\delta_{m,\eps})\eps^\frac{1}{2}
 = (-1)^{m+1}\gamma_n\alpha_m+o(1).
\end{align*}
which, combined with $\lim_{\eps\to 0}(\kappa_n\eps)^{m-\frac{1}{2}}|u_\eps(\delta_{m,\eps})|=D(m,m)+o(1)$ provides
\[
\alpha_m=\gamma_n^{-1}\lim_{\eps\to 0}\eps^{m-\frac{1}{2}}|u_\eps(\delta_{m,\eps})|=\gamma_n^{-1}\kappa_n^{\frac{1}{2}-m}D(m,m)
=\kappa_n^{\frac{1}{2}-m}m^{-\frac{1}{2}} \Gamma (m)^{-1}.
\]
Then \eqref{dpf} holds with $\alpha_k$ as in \eqref{alphak}. It remains to argue \eqref{O}. Let $K$ be a compact subset of $\overline{B}\backslash \{0\}$. By Theorem~\ref{pointwise:coro},
 \begin{align}\label{u1}
 \lim_{\eps\to 0}\| |u_\eps|(\kappa_n\epsilon)^{-\frac{1}{2}} - \gamma_nm^\frac{1}{2}(|\cdot|^{2-n}-1)\|_{L^\infty(K)}=0.
  \end{align}
  Moreover, for $y\in K$,
  \begin{align}
   \varphi_\eps(y)&:=(\kappa_n\eps)^{-\frac{1}{2}}\gamma_n \sum_{k=1}^m(-1)^{k+1}\Bigg(\frac{1}{1+[\alpha_k \eps^{\frac{1}{2}-k}]^\frac{4}{n-2}|y|^2}\Bigg)^\frac{n-2}{2}\alpha_k \eps^{\frac{1}{2}-k}\nonumber\\
   &=\kappa_n^{-\frac{1}{2}}\gamma_n \sum_{k=1}^m(-1)^{k+1}\Bigg(\frac{1}{\eps^{(k-\frac{1}{2})(\frac{4}{n-2})}+\alpha_k^\frac{4}{n-2}|y|^2}\Bigg)^\frac{n-2}{2}\alpha_k \eps^{k-1}\nonumber
   \\&=\gamma_n\kappa_n^{-\frac{1}{2}}\alpha_1^{-1}|y|^{2-n}+o(1)
   =\gamma_nm^\frac{1}{2}|y|^{2-n}+o(1),\label{u2}
  \end{align}
where $o(1)\to 0$ uniformly in $K$ as $\eps\to 0$. Observe that $u_\eps>0$ in $K$ for sufficiently small $\eps>0$; then, by \eqref{dpf}, \eqref{u1}, and \eqref{u2}, we have
\begin{align*}
&\lim_{\eps\to 0}\|\kappa_n^{-\frac{1}{2}}f_\eps-\gamma_nm^\frac{1}{2}\|_{L^\infty(K)}=\lim_{\eps\to 0}\|\varphi_\eps-(\kappa_n\eps)^{-\frac{1}{2}}u_\eps-\gamma_nm^\frac{1}{2}\|_{L^\infty(K)}\\
&\leq \lim_{\eps\to 0}\|\gamma_nm^\frac{1}{2}(|\cdot|^{2-n}-1)-(\kappa_n\eps)^{-\frac{1}{2}}u_\eps\|_{L^\infty(K)}+
\lim_{\eps\to 0}\|\varphi_\eps- \gamma_n m^\frac{1}{2}|\cdot|^{2-n}\|_{L^\infty(K)}=0.  \qedhere
\end{align*} 
\end{proof}
 
 \begin{proof}[Proof of Corollary \ref{nbt:coro}]
 Let $u_\eps$ be a (Neumann) solution of \eqref{sceq}, \eqref{nbc} with $m-1$ interior zeros such that $(-1)^{m+1}u_\eps(0)>0$ and let $v_\eps$ be a (Dirichlet) solution of \eqref{sceq}, \eqref{nbc} with $m-1$ interior zeros and such that $(-1)^{m+1}v_\eps(0)>0$.  Then, by Corollary \ref{dp:coro},
\begin{align}\label{dpf1}
v_\eps(y)=\gamma_n \sum_{k=1}^m(-1)^{k+1} \Bigg(\frac{1}{1+[\alpha_k \eps^{\frac{1}{2}-k}]^\frac{4}{n-2}|y|^2}\Bigg)^\frac{n-2}{2}\alpha_k \eps^{\frac{1}{2}-k}-f_\eps(y)\eps^\frac{1}{2}
\end{align}
for $y\in B$, where $f_\eps:B\to\R$ is uniformly bounded in $B$ and $\alpha_k$ is given by \eqref{alphak}.  Denote by $(\delta_{k,\eps})_{k=1}^m$ the decreasing sequence of all critical points of $v_\eps$.
By uniqueness and Theorem \ref{explicit:thm_Dirichlet}, we have that
\begin{align}\label{dpf2}
u_\eps(x)=\delta_{1,\eps}^\frac{2(n-2)}{4-\eps(n-2)} v_\eps(\delta_{1,\eps} x)
=(1+o(1))\delta_{1,\eps}^\frac{n-2}{2} v_\eps(\delta_{1,\eps} x),
\end{align}
where $\delta_{1,\eps}=(1+o(1))d(1,m) (\kappa_n \eps)^\frac{2(n-1)}{n(n-2)}$ and $d(1,m)=(m-1)^{\frac{1}{n}} m^{\frac{1}{n-2}}$. To ease notation, denote
\begin{align*}
 d_\eps:=\left(\frac{\delta_{1,\eps}}{\eps^\frac{2(n-1)}{n(n-2)}}\right)^\frac{2(n-2)}{4-\eps(n-2)}=[(1+o(1))d(1,m)\kappa_n^\frac{2(n-1)}{n(n-2)}]^\frac{2(n-2)}{4-\eps(n-2)}
 =(1+o(1))
 (m-1)^{\frac{n-2}{2n}} m^{\frac{1}{2}}
 \kappa_n^\frac{n-1}{n}
 .
\end{align*}
 Observe that
\begin{equation}\label{futurereference}
d_\eps\alpha_k=(1+o(1))\beta_k=:\beta_{k,\eps}\qquad \text{ for }k\in\{0,1,\ldots,m\}
\end{equation}
where $\lim_{\eps\to 0}\beta_{k,\eps}=\beta_k$ and $\beta_k$ is given by \eqref{beta}.  Then, using \eqref{dpf1}, \eqref{dpf2}, and \eqref{futurereference}, we have, after some calculations,
\begin{align*}
u_\eps(x)&=d_\eps \eps^\frac{n-1}{n}\gamma_n \sum_{k=1}^m(-1)^{k+1} \Bigg(\frac{1}{1+[\alpha_k d_\eps\eps^{\frac{n-2}{2n}-(k-1)}]^\frac{4}{n-2}|x|^2}\Bigg)^\frac{n-2}{2}\alpha_k \eps^{\frac{1}{2}-k}-f_\eps(\delta_{1,\eps}x) d_\eps\eps^\frac{3n-2}{2n}\\
&=\gamma_n \sum_{k=1}^m(-1)^{k+1} \Bigg(\frac{1}{1+[\beta_{k,\eps}\, \eps^{\frac{n-2}{2n}-(k-1)}]^\frac{4}{n-2}|x|^2}\Bigg)^\frac{n-2}{2}\beta_{k,\eps}\, \eps^{\frac{n-2}{2n}-(k-1)}+g_\eps(x)\eps^\frac{3n-2}{2n},
\end{align*}  
where $g_\eps:=-d_\eps f_\eps(\delta_{1,\eps}\cdot)$ is a function which is uniformly bounded in $B$.
 \end{proof}

\medbreak

\begin{proof}[Proof of Corollary \ref{ODE:coro}] 
The existence of $w_\eps$ follows from standard ODE considerations, see for example \cite[page 294]{N83}. Note that
\begin{align}\label{u:1}
u_{\eps}(x)
= r_{m,\eps}^{\frac{2(n-2)}{4-\eps(n-2)}} w_\eps(r_{m,\eps} x)\qquad \text{ for }x\in\overline{B} 
\end{align}
is a radial solution of \eqref{sceq}, \eqref{dbc} with $(m-1)$-interior zeros.  Then, using Theorem \ref{explicit:thm_Dirichlet} applied to \eqref{u:1} and recalling that $\delta_{m,k}=0$ and $\rho_{1,k}=1$ therein, and $w_\eps(0)=1$, we have 
\begin{align*}
\lim_{\eps\to 0} r_{m,\eps}^\frac{2(n-2)}{4-\eps(n-2)} (\kappa_n \eps)^\frac{2m-1}{2}&=D(m,m),\\
\lim_{\eps\to 0} r_{m,\eps}^\frac{2n-\eps(n-2)}{4-\eps(n-2)}|w'_\eps(r_{m,\eps})|(\kappa_n \eps)^{-\frac{1}{2}} &=Z(1,m).
\end{align*}
Thus, taking into consideration \eqref{sim}, we have that
\begin{align*}
\lim_{\eps\to 0}r_{m,\epsilon }(\kappa_n\epsilon)^{\frac{2m-1}{n-2}}&=D(m,m)^{\frac{2}{n-2}}
= \sqrt{n-2} \sqrt{n}\ m^{\frac{1}{2-n}}\  \Gamma (m)^{-\frac{2}{n-2}},\\
\lim_{\eps\to0}|w_\eps'(r_{m,\epsilon })|(\kappa_n\epsilon)^{\frac{1-m n}{n-2}}&= Z(1,m) D(m,m)^{-\frac{n}{n-2}}
=\sqrt{\frac{n-2}{n}} m^{\frac{n-1}{n-2}}  \Gamma (m)^{\frac{n}{n-2}}.
\end{align*}

Similarly, let $s_{m,\eps}>0$ be the $m$-th critical point of $w_\eps$ for $m\geq 2$ (the first critical point is always the origin). Then, 
\begin{align*}
v_{\eps}(x)
= s_{m,\eps}^{\frac{2(n-2)}{4-\eps(n-2)}} w_\eps(s_{m,\eps} x)\qquad \text{ for }x\in\overline{B}
\end{align*}
is a radial solution of \eqref{sceq}, \eqref{nbc} with $(m-1)$-interior zeros, and therefore, by Theorem \ref{explicit:thm_Neumann} (recalling that $\delta_{1,\eps}=1$ and $\delta_{m,\eps}=0$ therein) and since $w_\eps(0)=1$, we have
\begin{align*}
\lim_{\eps \to 0} s_{m,\eps}^\frac{2(n-2)}{4-\eps(n-2)} (\kappa_n \eps)^\frac{2mn-3n+2}{2n}&=\widetilde D(m,m),\\
\lim_{\eps\to 0} s_{m,\eps}^\frac{2(n-2)}{4-\eps(n-2)}|w_\eps(s_{m,\eps})| (\kappa_n \eps)^\frac{2-n}{2n}&=\widetilde D(1,m).
\end{align*}
Again, by  \eqref{sim}, we conclude that
\begin{align*}
\lim_{\eps\to 0}s_{m,\eps}\ (\kappa_n\epsilon)^{\frac{2mn-3n+2}{n(n-2)}}&= \widetilde D(m,m)^{\frac{2}{n-2}}=\sqrt{n-2} \sqrt{n} (m-1)^{\frac{1}{n}} \Gamma (m)^{-\frac{2}{n-2}},\\
\lim_{\eps\to 0}|w_\eps(s_{m,\eps})|\ (\kappa_n\epsilon)^{1-m} &=\frac{\widetilde D(1,m)}{\widetilde D(m,m)}=\Gamma (m). \qedhere
\end{align*}
\end{proof}

\begin{remark} 
Similarly as in Corollaries \ref{dp:coro} and \ref{nbt:coro}, one can use Corollary \ref{ODE:coro} and the ansatz \eqref{dpf} to construct a function which approximates the solution of the subcritical problem \eqref{ws}. For instance, let $n\geq 3$, $\eps\in(0,\frac{4}{n-2})$, $w_\eps$ a solution of \eqref{eq:radial}, $M\in\N$, and let
\begin{align}\label{ib}
v_\eps(y):=&\sum_{m=1}^M(-1)^{m+1} \Bigg(\frac{1}{1+[\gamma_n^{-1}\Gamma(m)(\kappa_n\eps)^{m-1}]^\frac{4}{n-2}|y|^2}\Bigg)^\frac{n-2}{2}\Gamma(m)\, (\kappa_n\eps)^{m-1}\ \ \text{ for }y\geq 0,
  \end{align}
where $\gamma_n$ is given in \eqref{bubble} and $\kappa_n$ in \eqref{Cchi}.  Let $0=s_{1,\eps}<s_{2,\eps}<\ldots$ denote the increasing sequence of all the critical points of $w_\eps$. Then
\begin{align}
&\lim_{\eps\to 0}|\epsilon^{1-k}(|w_\eps(s_{k,\eps})|-|v_\eps(s_{k,\eps})|)|=0\qquad \text{ for all }k\in\{1,\ldots,M\},\label{cpib}\\
&\lim_{\eps\to 0}\|\epsilon^{1-M}(|w_\eps|+|v_\eps|)\|_{L^\infty(s_{M,\eps},\infty)}\leq 2\Gamma(M)\kappa_n^{M-1}.\label{cpib2}
\end{align}
We leave the details of the proof to the interested reader.
\end{remark}

\bigbreak

\textbf{Acknowledgments.} 
H. Tavares is partially supported by ERC Advanced Grant 2013 n. 339958 ``Complex
Patterns for Strongly Interacting Dynamical Systems - COMPAT'' and by FCT (Portugal) grant UID/MAT/04561/2013. 
A. Salda\~{n}a was supported by the Alexander von Humboldt Foundation (Germany), and is also grateful to the Universit\`{a} di Roma La Sapienza and to the Universidade de Lisboa, where this project started during two short research stays.


\end{document}